\theoremstyle{plain}
\makeatletter\@namedef{subjclassname@2010}{\textup{2010} Mathematics Subject Classification}
\newtheorem{Thm}{Theorem}[section]
\newtheorem{Lem}[Thm]{Lemma}
\newtheorem{Cor}[Thm]{Corollary}
\newtheorem{Pro}[Thm]{Proposition}
\newtheorem{Cnj}[Thm]{Conjecture}
\theoremstyle{definition}
\newtheorem{Def}[Thm]{Definition}
\newtheorem{Exm}[Thm]{Example}
\theoremstyle{remark}
\newtheorem{Rem}[Thm]{Remark}
\numberwithin{equation}{section}
\newcommand{\ITE}[3]{\ifthenelse{#1}{#2}{#3}}\newcommand{\ITEE}[4][]{\ITE{\equal{#2}{#3}}{#4}{#1}}
\newcommand{\myData}[1][]{
 \author[P.\ Niemiec]{Piotr Niemiec}
 \address{\ITEE{#1}{*}{P.\ Niemiec{}\\}Instytut Matematyki\\
  Wydzia\l{} Matematyki i~Informatyki\\Uniwersytet Jagiello\'{n}ski\\
  ul.\ \L{}ojasiewicza 6\\30-348 Krak\'{o}w\\Poland}
 \email{piotr.niemiec@uj.edu.pl}
 }
\newenvironment{cor}[2][]{\ITEE[{\begin{Cor}[#1]}]{#1}{}{\begin{Cor}}\label{cor:#2}}{\end{Cor}}
\newenvironment{dfn}[2][]{\ITEE[{\begin{Def}[#1]}]{#1}{}{\begin{Def}}\label{def:#2}}{\end{Def}}
\newenvironment{exm}[2][]{\ITEE[{\begin{Exm}[#1]}]{#1}{}{\begin{Exm}}\label{exm:#2}}{\end{Exm}}
\newenvironment{lem}[2][]{\ITEE[{\begin{Lem}[#1]}]{#1}{}{\begin{Lem}}\label{lem:#2}}{\end{Lem}}
\newenvironment{pro}[2][]{\ITEE[{\begin{Pro}[#1]}]{#1}{}{\begin{Pro}}\label{pro:#2}}{\end{Pro}}
\newenvironment{rem}[2][]{\ITEE[{\begin{Rem}[#1]}]{#1}{}{\begin{Rem}}\label{rem:#2}}{\end{Rem}}
\newenvironment{thm}[2][]{\ITEE[{\begin{Thm}[#1]}]{#1}{}{\begin{Thm}}\label{thm:#2}}{\end{Thm}}
\newenvironment{cnj}[2][]{\ITEE[{\begin{Cnj}[#1]}]{#1}{}{\begin{Cnj}}\label{cnj:#2}}{\end{Cnj}}
\newcommand{\COR}[2][!]{\ITEE{#1}{!}{Corollary~}\ITEE{#1}{s}{Corollaries~}\textup{\ref{cor:#2}}}
\newcommand{\DEF}[2][!]{\ITEE{#1}{!}{Definition~}\ITEE{#1}{s}{Definitions~}\textup{\ref{def:#2}}}
\newcommand{\EXM}[2][!]{\ITEE{#1}{!}{Example~}\ITEE{#1}{s}{Examples~}\textup{\ref{exm:#2}}}
\newcommand{\LEM}[2][!]{\ITEE{#1}{!}{Lemma~}\ITEE{#1}{s}{Lemmas~}\textup{\ref{lem:#2}}}
\newcommand{\PRO}[2][!]{\ITEE{#1}{!}{Proposition~}\ITEE{#1}{s}{Propositions~}\textup{\ref{pro:#2}}}
\newcommand{\THM}[2][!]{\ITEE{#1}{!}{Theorem~}\ITEE{#1}{s}{Theorems~}\textup{\ref{thm:#2}}}
\newcommand{\CCC}{\mathbb{C}}
\newcommand{\DDD}{\mathbb{D}}
\newcommand{\NNN}{\mathbb{N}}
\newcommand{\ZZZ}{\mathbb{Z}}
\newcommand{\DdD}{\EuScript{D}}
\newcommand{\NnN}{\EuScript{N}}
\newcommand{\RrR}{\EuScript{R}}
\newcommand{\TtT}{\EuScript{T}}
\newcommand{\Bb}{\mathfrak{B}}
\newcommand{\Uu}{\mathfrak{U}}
\newcommand{\mM}{\mathfrak{m}}
\newcommand{\ccC}{\mathscr{C}}
\newcommand{\ggG}{\mathscr{G}}
\newcommand{\hhH}{\mathscr{H}}
\newcommand{\rrR}{\mathscr{R}}
\newcommand{\vvV}{\mathscr{V}}
\newcommand{\wwW}{\mathscr{W}}
\newcommand{\zzZ}{\mathscr{Z}}
\newcommand{\ueR}{\textup{\textsf{R}}}
\newcommand{\dd}{\colon}
\newcommand{\df}{\stackrel{\textup{def}}{=}}
\newcommand{\dint}[1]{\,\textup{d} #1}
\newcommand{\epsi}{\varepsilon}
\newcommand{\geqsl}{\geqslant}
\newcommand{\leqsl}{\leqslant}
\newcommand{\op}{\textup{\textsf{op}}}
\newcommand{\scalar}[2]{\left\langle#1,#2\right\rangle}
\newcommand{\scalarr}{\langle\cdot,\mathrm{-}\rangle}
\newcommand{\varempty}{\varnothing}
\newcommand{\OPN}[1]{\operatorname{#1}}
\newcommand{\card}{\operatorname{card}}
\newcommand{\lin}{\operatorname{lin}}
\newcommand{\RE}{\operatorname{Re}}
\newcommand{\TFCAE}{The following conditions are equivalent:}
\newcommand{\tfcae}{the following conditions are equivalent:}
\begin{document}

\title[Positive Hankel operators and kernels]%
 {Positive Hankel operators,\\positive definite kernels and related topics}
\myData
\dedicatory{Dedicated to Professor Franciszek Hugon Szafraniec---the main of my
 two scientific fathers,\\on the occasion of his 80th birthday}
\begin{abstract}
It is shown that a positive (bounded linear) operator on a Hilbert space with
trivial kernel is unitarily equivalent to a Hankel operator that satisfies
double positivity condition if and only if it is non-invertible and has simple
spectrum (that is, if this operator admits a cyclic vector). More generally, for
an arbitrary positive (bounded linear) operator \(A\) on a Hilbert space \(H\)
with trivial kernel the collection \(\vvV(A)\) of all linear isometries \(V\dd H
\to H\) such that \(A V\) is positive as well is investigated. In particular,
operators \(A\) such that \(\vvV(A)\) contains a pure isometry with a given
deficiency index are characterized. Some applications to unbounded positive
self-adjoint operators as well as to positive definite kernels are presented.
In particular, positive definite matrix-type square roots of such kernels are
studied and kernels that have a unique such root are characterized. The class of
all positive definite kernels that have at least one such a square root is also
investigated.
\end{abstract}
\subjclass[2010]{Primary 47B35; Secondary 46E22, 47B25.}
\keywords{Hankel operator; double positivity condition; positive operator;
 positive definite kernel; positive square root; operator range.}
\maketitle

\section{Introduction}

In \cite{mpt} the authors characterized (in the language of the multiplicity
theory of separable Hilbert space self-adjoint operators) bounded self-adjoint
operators that are unitarily equivalent to Hankel. This is a deep result whose
proof is difficult and long. For a bounded \textbf{positive} operator \(A\) on
a separable Hilbert space the following two theorems immediately follow:
\begin{itemize}
\item if \(A\) is unitarily equivalent to a Hankel operator, the essential
 supremum of the multiplicity function of \(A\) does not exceed \(2\);
\item if the essential supremum of the multiplicity function of \(A\) does not
 exceed \(1\), \(A\) is unitarily equivalent to a Hankel operator.
\end{itemize}
Hankel operators can be defined in a few equivalent ways. One of them,
appropriate to our investigations, reads as follows: a bounded operator \(A\dd
\ell_2 \to \ell_2\) is \textit{Hankel} if \(A S = S^* A\) where \(S\) is
the standard unilateral shift (that is, \(S\) is a linear isometry satisfying
\(S e_n = e_{n+1}\) for any \(n \geqsl 0\) where \(e_0,e_1,\ldots\) is
the canonical orthonormal basis of \(\ell_2\)). So, \(A\) is both Hankel and
self-adjoint iff both \(A\) and \(AS\) are self-adjoint. When in the last
condition we replace the adjective \textsl{self-adjoint} by \textsl{positive},
we obtain the so-called \textit{double positivity condition} \cite{g-p}: \(A\)
is a Hankel operator satisfying double positivity condition if both \(A\) and
\(AS\) are positive. Having this notion in mind, a natural question (related to
the main topic of the aforementioned paper) arises of when a (bounded) positive
operator on a separable Hilbert space is unitarily equivalent to a Hankel
operator satisfying double positivity condition. In the present paper we answer
this question under the additional assumption that the operator in question has
trivial kernel. Our main result reads as follows (below \(\RrR(T)\) denotes
the range of an operator \(T\)).

\begin{thm}{main}
For a positive bounded operator \(A\dd H \to H\) with trivial kernel and
a cardinal \(\mM > 0\) \tfcae
\begin{enumerate}[\upshape(i)]
\item there exists a pure isometry \(V\dd H \to H\) such that \(AV\) is positive
 and \[\dim \RrR(V)^{\perp} = \mM;\]
\item \(A\) is non-invertible, \(\dim(H) = \max(\mM,\aleph_0)\) and
 an appropriate condition of the following three is fulfilled:
 \begin{itemize}
 \item[\((\alpha)\)] \(\mM < \aleph_0\) and the essential supremum of
  the multiplicity function of \(A\) does not exceed \(\mM\); or
 \item[\((\beta)\)] \(\mM = \aleph_0\); or
 \item[\((\gamma)\)] \(\mM > \aleph_0\) and there exists a closed linear
  subspace \(Z\) of \(H\) such that \(Z \cap \RrR(A) = \{0\}\) and \(\dim(Z) =
  \mM\).
 \end{itemize}
\end{enumerate}
In particular, a bounded one-to-one linear operator is unitarily equivalent to
a Hankel operator that satisfies double positivity condition iff it is positive,
non-invertible and has simple spectrum.
\end{thm}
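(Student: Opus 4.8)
The plan is to convert both conditions into a statement about \emph{block Hankel models} and then read off the multiplicity data from an operator Stieltjes moment representation. First I record the algebraic consequences of (i): since $A=A^*$, positivity of $AV$ forces $AV=(AV)^*=V^*A$, whence $AV^n=(V^*)^nA$ for every $n\geqsl0$. Writing the Wold decomposition of the pure isometry $V$, namely $H=\bigoplus_{n\geqsl0}V^nL$ with $L\df\RrR(V)^{\perp}$ and $\dim L=\mM$, I would identify $A$ with the block matrix $A_{mn}=C_{m+n}$ on $\bigoplus_{n\geqsl0}L$, where $C_k\df P_L(V^*)^kA|_L$ is a self-adjoint operator on $L$; a direct computation gives $(AV)_{mn}=C_{m+n+1}$. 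Thus (i) says precisely that $A$ is unitarily equivalent to a positive injective block Hankel operator $(C_{m+n})$ with $\dim L=\mM$ whose shift $(C_{m+n+1})$ is also positive, and conversely every such operator produces, via its block shift, a pure isometry with deficiency $\mM$ and $AV$ positive. The decomposition also yields $\dim H=\aleph_0\cdot\mM=\max(\mM,\aleph_0)$.

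For (i)$\Rightarrow$(ii) the non-invertibility is the quickest point: were $A$ invertible, $V^*=AVA^{-1}$ would be similar to $V$, but a pure isometry has empty point spectrum while its adjoint---a backward shift of positive deficiency---has every point of $\DDD$ as an eigenvalue, and similar operators share their point spectrum. For the multiplicity clause I would use that the two positivity conditions $(C_{m+n})\geqsl0$ and $(C_{m+n+1})\geqsl0$ are exactly the operator Stieltjes conditions, so $C_k=\int_0^{\|A\|}t^k\dint\Sigma(t)$ for a positive operator-valued measure $\Sigma$ on $L$ with $\int\dint\Sigma=C_0$ injective (as $\ker A=\{0\}$) and $0\in\operatorname{supp}\Sigma$ (non-invertibility). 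The spectral multiplicity function of $A$ is then controlled by the pointwise rank of $\Sigma$, which is at most $\dim L=\mM$; this gives $(\alpha)$ when $\mM<\aleph_0$, is automatic for $(\beta)$, and for $(\gamma)$ one extracts from the $\mM$-dimensional wandering space $L$ a closed subspace $Z$ with $\dim Z=\mM$ meeting the operator range $\RrR(A)$ only in $\{0\}$.

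For (ii)$\Rightarrow$(i) I would run this backwards. Representing $A$ through its spectral measure as multiplication by the variable on an $L^2$-space of vector-valued functions that realises its multiplicity function, I would manufacture a positive operator-valued Stieltjes measure $\Sigma$ on an $L$ with $\dim L=\mM$ carrying these spectral data and with $C_0=\int\dint\Sigma$ \emph{injective}; the moments $C_k=\int t^k\dint\Sigma$ then assemble into a doubly positive block Hankel operator unitarily equivalent to $A$, and its block shift is the sought $V$. The role of $(\alpha)/(\beta)/(\gamma)$ is precisely to guarantee that the admissible (at most $\mM$) pointwise multiplicities can be spread over the infinite set $\sigma(A)$---infinite because $0\in\sigma(A)$ is not an eigenvalue---so as to render $C_0$ injective; in the nonseparable regime $(\gamma)$ the prescribed subspace $Z$ supplies the $\mM$ free directions that make this possible.

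Finally the \emph{in particular} assertion is the case $\mM=1$. Here $(\alpha)$ reads ``essential supremum of the multiplicity function $\leqsl1$'', i.e.\ simple spectrum, and $\dim H=\aleph_0$. A pure isometry of deficiency $1$ is unitarily equivalent to the standard shift $S$, say $V=U^*SU$ with $U$ unitary; then $AV\geqsl0$ if and only if $(UAU^*)S\geqsl0$, i.e.\ $UAU^*$ is a Hankel operator satisfying the double positivity condition. Since unitary equivalence preserves positivity, injectivity and non-invertibility, a one-to-one operator is unitarily equivalent to such a Hankel operator exactly when it is positive, non-invertible and of simple spectrum. I expect the genuine obstacle to sit in the two multiplicity steps: proving that the Stieltjes (doubly positive) block Hankel model realises multiplicity at most $\mM$, and conversely that every admissible multiplicity datum is encoded by a Stieltjes measure with injective zeroth moment. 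This is where the operator Stieltjes moment problem and the operator-range analysis of $\RrR(A)$ carry the weight, whereas the Wold reduction and the dimension and non-invertibility bookkeeping are routine by comparison.
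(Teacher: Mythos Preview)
Your structural setup is sound: the Wold decomposition, the block Hankel identification $A_{mn}=C_{m+n}$, the dimension count $\dim H=\max(\mM,\aleph_0)$, and the non-invertibility argument via point spectra are all correct, as is the final reduction of the ``in particular'' statement to $\mM=1$. You are also right that $L=\RrR(V)^{\perp}$ itself satisfies $L\cap\RrR(A)=\{0\}$ (no extraction is needed: from $AV=V^*A$ and injectivity of $A$ this is a one-line check), which handles $(\gamma)$ in the forward direction.

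The genuine gap is exactly where you locate it: both multiplicity steps. You propose the operator Stieltjes moment problem to produce $C_k=\int t^k\,\textup{d}\Sigma$ and then read off multiplicity from the pointwise rank of $\Sigma$, but you do not carry this out, and the converse---manufacturing a $\Sigma$ with injective zeroth moment realising prescribed multiplicity data---is only a sketch. These are not routine: for the converse you must arrange the support and rank of $\Sigma$ so that $C_0=\int\textup{d}\Sigma$ is injective while matching the spectral type of $A$, and in the nonseparable regime link this to the subspace $Z$ of $(\gamma)$. As written, the proposal is an outline with its two load-bearing steps missing.

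The paper bypasses the Stieltjes machinery by a more elementary route. Its pivotal lemma identifies the Wold decomposition of any $V\in\vvV(A)$: the pure summand is exactly $[Z]_A$, the smallest closed $A$-invariant subspace containing $Z=\RrR(V)^{\perp}$, and the unitary summand is $\NnN(V-I)$. For pure $V$ this gives $H=[Z]_A$, so $Z$ is a cyclic set for $A$ of size $\mM$; when $\mM<\aleph_0$ the bound $(\alpha)$ follows at once from the standard equivalence between cyclic-set cardinality and essential multiplicity. For the converse the paper argues case by case: it first settles $\mM=1$ in the multiplication-operator model by exhibiting a cyclic vector outside $\RrR(A)$; for finite $\mM$ or $\mM=\aleph_0$ it splits $A$ into non-invertible simple-spectrum summands (using the spectral measure to peel off pieces with norms tending to $0$, so that invertible summands can be merged with them without destroying simple spectrum) and applies the $\mM=1$ case to each; for $\mM>\aleph_0$ it shows, via the spectral projections $E((0,\varepsilon))$ and the subspace $Z$ in $(\gamma)$, that $H$ decomposes into $\mM$ separable $A$-invariant blocks on each of which $A$ is non-invertible, and invokes the $\aleph_0$ case. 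No moment problem enters.

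Your Stieltjes plan could in principle be completed---the forward multiplicity bound would follow from disintegrating the operator-valued measure---but the converse genuinely needs a construction, and that construction will in the end amount to the same case analysis the paper performs directly.
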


\THM{main} shows (in particular) that for infinite \(\mM\) the property (i)
above depends only on the range of the operator \(A\) (i.e., if two positive
operators \(A\) and \(B\) have trivial kernels and their ranges coincide, then
either both \(A\) and \(B\) satisfy (i) or none of them). In \THM{pure} below we
gather conditions on a dense operator range \(\RrR\) in a Hilbert space \(H\)
related to the foregoing statement \((\gamma)\)---that is, conditions equivalent
to the existence of a closed linear subspace \(Z\) of \(H\) such that \(Z \cap
\RrR = \{0\}\) and \(\dim(Z) = \dim(H)\).\par
Our proofs are independent of the results from \cite{mpt} and are much simpler.
We use basics of the operator theory and of the spectral theory of self-adjoint
operators.\par
Another topic we deal with in this paper is related to Hilbert space reproducing
(that is, positive definite) kernels. They are a useful tool in both Hilbert
space theory and complex analysis (where they are known as Bergman kernels).
Since the seminal paper of Aronszajn \cite{aro}, positive definite kernels are
a subject of an intristic theory. Although Bergman \cite{bm1,bm2} is considered
by a sizeable mathematical community as the father of that theory, it is
Zaremba's work \cite{zar}, published 15 years earlier than the first Bergman's
on kernels, where the reproducing property (without any name) appeared for
the first time---see, e.g., \cite{aro} or \cite{sz3}. We wish to emphasize
Zaremba's contribution to the theory by calling him its \textsl{forefather}.\par
As positive definite kernels naturally generalize positive matrices (for complex
square matrices can be seen as kernels defined on finite sets), it is natural to
investigate various properties of such matrices and recognize those of them that
inhere in all such kernels. In the present paper (in Section~5) we characterize
those kernels which have the so-called matrix-type square root. To be more
precise, we introduce the following

\begin{dfn}{root}
Let \(K\dd X \times X \to \CCC\) and \(L\dd X \times X \to \CCC\) be two
positive definite kernels. \(K\) is said to be a \textit{positive definite
matrix-type square root} (for short: a \textit{pdms} root) of \(L\) if for all
\(x,z \in X\):
\begin{equation}\label{eqn:matrix}
L(x,z) = \sum_{y \in X} K(x,y) K(y,z).
\end{equation}
\end{dfn}

(More on the above notion can be found in Section~5.) A classical result from
matrix theory (or, more generally, from bounded Hilbert space operator theory)
says that any positive matrix has a \underline{unique} positive square root.
A natural question arises as to how far this result extends in the realm of
reproducing kernels. Our main result in this direction reads as follows. (Below
we write ``\(K \ll L\)'' to express that \(L-K\) is a positive definite kernel;
and \(\delta_X\) is a kernel on \(X\) such that \(\delta_X(x,y) = 1\) if \(y =
x\) and \(\delta_X(x,y) = 0\) otherwise.)

\begin{thm}{uniq}
For a positive definite kernel \(K\dd X \times X \to \CCC\) \tfcae
\begin{enumerate}[\upshape(i)]
\item \(K\) has a unique positive definite matrix-type square root;
\item each positive definite kernel \(L\dd X \times X \to \CCC\) such that
 \(L \ll K\) has a positive definite matrix-type square root;
\item \(K \ll c \delta_X\) for some constant \(c > 0\).
\end{enumerate}
\end{thm}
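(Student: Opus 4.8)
The plan is to translate the whole statement into the language of (generally unbounded) positive operators on $\ell_2(X)$ and then read the three conditions off the spectral theory of such operators. I would identify $K$ with the positive sesquilinear form $\mathfrak t_K$ defined on the dense subspace $D \subseteq \ell_2(X)$ of finitely supported functions by $\mathfrak t_K(a,b) = \sum_{x,y} \overline{a_x} K(x,y) b_y$. A short computation then shows that a positive definite kernel $M$ is a pdms root of $K$ (in the sense of \DEF{root}) precisely when the column operator $B = T_M \dd D \to \ell_2(X)$, $(T_M a)(x) = \sum_y M(x,y) a_y$, is well defined, positive and symmetric and satisfies $\scalar{B a}{B b} = \mathfrak t_K(a,b)$ for all $a,b \in D$; indeed, writing $B\delta_z = (M(x,z))_x$ one gets $\scalar{B\delta_x}{B\delta_z} = \sum_y M(x,y)M(y,z) = K(x,z)$, and positivity and symmetry of $B$ are exactly Hermitianness and positive definiteness of $M$. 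Thus a pdms root of $K$ is the same thing as a positive symmetric square root of the form $\mathfrak t_K$. Under this dictionary condition (iii), that $K \ll c\delta_X$ for some $c>0$, says exactly that $\mathfrak t_K$ is bounded, i.e.\ that $K$ extends to a bounded positive operator $A_K$ on $\ell_2(X)$.

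First I would dispose of the implications from (iii). If $\mathfrak t_K$ is bounded, then $A_K$ is a bounded positive operator and, by the classical theorem on the unique positive square root, $A_K^{1/2}$ is the only bounded positive operator whose square is $A_K$; its matrix $M(x,y) = \scalar{A_K^{1/2}\delta_y}{\delta_x}$ is a pdms root of $K$. Conversely, any pdms root $M$ satisfies $\|B a\|^2 = \mathfrak t_K(a,a) \leqsl c\|a\|^2$, so $B = T_M$ is bounded, positive and squares to $A_K$; uniqueness of the positive square root forces $B = A_K^{1/2}$, whence $M$ is unique. This gives (iii) $\Rightarrow$ (i). For (iii) $\Rightarrow$ (ii) note that $\ll$ is transitive, so $L \ll K \ll c\delta_X$ yields $L \ll c\delta_X$; then $L$ too is bounded and the existence half of the previous paragraph, applied to $L$ in place of $K$, produces a pdms root of $L$.

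It remains to prove (i) $\Rightarrow$ (iii) and (ii) $\Rightarrow$ (iii), which I would do by contraposition: assuming $\mathfrak t_K$ unbounded, I must violate both the uniqueness in (i) and the universal existence in (ii). Since $K \ll K$ holds trivially, if $K$ has no pdms root at all then (ii) fails with $L = K$ and (i) fails for lack of a root; so the substantive case is an unbounded $\mathfrak t_K$ that nevertheless admits a positive symmetric square root $B$. Such a $B$ is necessarily unbounded, so the classical uniqueness no longer applies, and the freedom in the self-adjoint extension theory of $B$ (equivalently, the nontrivial isometries supplied by the machinery behind \THM{main} and the operator-range description in \THM{pure}) should produce a second, distinct positive symmetric square root of $\mathfrak t_K$, i.e.\ a second pdms root, refuting (i). To refute (ii) I would exhibit an intermediate kernel $L$ with $L \ll K$ whose form, though dominated by $\mathfrak t_K$, is arranged so that no positive symmetric operator on $D$ can be its square root.

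The main obstacle is exactly this unbounded case: one must show that an unbounded positive form can never possess a positive symmetric square root that is at the same time unique, and one must manufacture the witnessing kernel $L \ll K$ admitting no root. Both tasks demand genuine unbounded operator theory rather than the elementary inequalities used for the implications from (iii); the expected tools are the deficiency-index and operator-range techniques developed for \THM{main} and \THM{pure}, through which the existence, non-existence and non-uniqueness of positive symmetric square roots are governed by the same isometries $V$ with $AV$ positive that underlie the characterization of Hankel operators satisfying the double positivity condition.
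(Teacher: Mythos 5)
Your dictionary between pdms roots of \(K\) and positive symmetric operators \(B\dd \ell_{fin}(X) \to \ell_2(X)\) with \(\scalar{Ba}{Bb}_{\ell_2(X)} = \mathfrak{t}_K(a,b)\) is sound, and your implications out of (iii) are complete and correct---there your argument via the unique bounded positive square root \(A_K^{1/2}\) is essentially the paper's, slightly more directly stated. The genuine gap is that the two remaining implications, which carry all the content of the theorem, are announced rather than proved, and one of your two announced strategies points at the wrong mechanism. For non-uniqueness in the unbounded case: every pdms root is determined by its values on the core \(\ell_{fin}(X)\) (cf.\ \LEM{closable} and \PRO{roots}), so the freedom is \emph{not} that of self-adjoint extension theory---\(B\) may perfectly well be essentially self-adjoint while \(K\) has many roots; varying domains or extensions produces nothing. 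The correct freedom is the one in your parenthetical: isometries \(V\) with \(VB\) positive. What the paper actually proves and uses is \THM{unbd}: an unbounded positive self-adjoint \(B\) is the absolute value of some positive closed \(T \neq B\); writing \(T = QB\) (polar decomposition), \(Q\restriction{\overline{\RrR}(B)}\) is an isometry different from the identity with \(QB\) positive, and item (II) of \PRO{roots} converts it into a second pdms root. Establishing the existence of such a \(T\) is exactly where \PRO{iso-lin} (the \(\vvV(\cdot)\)-machinery) enters, and none of this is carried out in your sketch.

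The refutation of (ii) is likewise missing, and it is genuinely hard to reach from your pure \(\ell_2\)-form viewpoint, because the witness lives in the reproducing kernel space \(\hhH_K\) rather than on \(\ell_{fin}(X)\). The paper's construction is short but depends on RKHS machinery you have not set up: by the ``moreover'' part of \LEM{classic}, the failure of (iii) means precisely \(\hhH_K \not\subset \ell_2(X)\); pick a unit vector \(u \in \hhH_K \setminus \ell_2(X)\) and set \(L \df \bar{u} \otimes u\). Then \(L \ll K\) (the rank-one kernel of a unit vector of \(\hhH_K\) is dominated by \(K\)), while \(\hhH_L = \lin\{u\}\) gives \(\ell_2(X) \cap \hhH_L = \{0\}\), so criterion (d) of \THM{exists} (density of \(\ell_2(X) \cap \hhH_L\) in \(\hhH_L\)) shows \(L\) has no pdms root. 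Without \LEM{classic} to translate the failure of (iii) into \(\hhH_K \not\subset \ell_2(X)\), and without the existence criteria of \THM{exists} to certify root-freeness, neither the domination \(L \ll K\) nor the non-existence of a root for \(L\) is accessible; as it stands, your proposal proves only (iii)\(\implies\)(i) and (iii)\(\implies\)(ii).
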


(Note that the equivalence of conditions (i)--(iii) above implies that each
kernel \(L\) appearing in (ii) has in fact a unique pdms root.) We underline
here that condition (i) above says about both the existence and the uniqueness
of pdms roots. In Section~5 we also give equivalent conditions for a positive
definite kernel to have at least one such root.\par
For more information on reproducing kernels consult \cite{bm2} or \cite{sai}.
Modern expositions can be found in \cite{bta} or \cite{sz1,sz2}.\par
The paper is organized as follows. In Section~2 we study in greater detail
the collections \(\vvV(A)\) and \(\zzZ(A)\) introduced above, and give a proof
of \THM{main}. Next section contains further conditions (not listed in
\THM{main}) equivalent to condition (i) of that theorem. The reader can also
find there a full description of all possible unitary equivalence types of
isometries from \(\vvV(A)\) (see \THM{card} below). In the fourth part
the results of Section~2 are applied to unbounded operators. We prove that any
truly unbounded positive self-adjoint operator is the absolute value of some
positive closed operator that is not self-adjoint (this serves as a criterion
for a boundedness of positive self-adjoint operators)---consult \THM{unbd}. We
also show that \textbf{all} closable operators are (in a certain sense)
``conditionally'' weakly continuous (see \LEM{closed} therein). Last, fifth,
part is devoted to the notion of a pdms root (for reproducing kernels)
introduced above. We gather equivalent conditions for a positive definite kernel
to have at least one pdms root (\THM{exists}), prove \THM{uniq} and study in
greater detail kernels having such roots (see, e.g., \THM{pdms}). In particular,
among all such roots (of a fixed kernel) we distinguish one of them which can be
seen as (unique) ``self-adjoint'' (\COR{sa}). We also show that if a kernel is
a pdms root of some other kernel, then it automatically has a pdms root (item
(V) of \THM{pdms}). This property enables one to define (positive definite
matrix-type) roots of higher degrees. Apart from the results of Section~4,
proofs presented in the last part invoke the machinery of unbounded symmetric
operators with Friedrichs' theorem \cite{fri} on extending positive operators as
the main of them.

\subsection*{Notation and terminology}

Throughout this paper all Hilbert spaces are non-trivial and complex, and \(H\)
denotes one of them. By \(\dim(H)\) we denote the Hilbert space dimension of
\(H\), that is, \(\dim(H)\) is the cardinality of an orthonormal basis of \(H\).
The scalar product of \(H\) will be denoted by \(\scalarr_H\). All operators are
linear, act between Hilbert spaces and have dense domains. A linear subspace
generated by a set \(F\) is denoted by \(\lin(F)\) and \(\overline{\lin}(F)\)
stands for the closure of \(\lin(F)\). For any non-empty set \(X\) we use
\(\ell_2(X)\) to denote the Hilbert space of all square-summable complex-valued
functions on \(X\) equipped with the standard inner product. More precisely,
\(f\dd X \to \CCC\) belongs to \(\ell_2(X)\) if \(\sum_{x \in X} |f(x)|^2 <
\infty\); and for \(u, v \in \ell_2(X)\), \(\scalar{u}{v}_{\ell_2(X)} =
\sum_{x \in X} u(x) \overline{v(x)}\). The \textit{canonical} basis of
\(\ell_2(X)\) consists of functions \(e_x\) (where \(x\) runs over all elements
of \(X\)) of the form: \(e_x(x) = 1\) and \(e_x(y) = 0\) for \(y \neq x\). For
simplicity, we will denote by \(\ell_{fin}(X)\) the linear span of the canonical
basis (so, \(f\dd X \to \CCC\) belongs to \(\ell_{fin}(X)\) iff the set \(\{x
\in X\dd\ f(x) \neq 0\}\) is finite).\par
We use \(\Bb(H)\) and \(\Uu(H)\) to denote, respectively, the \(C^*\)-algebra of
all bounded operators on \(H\) and the group of all unitary operators on \(H\);
\(I = I_H\) is used to denote the unit of \(\Uu(H)\), and \(\Bb_+(H)\) stands
for the collection of all bounded positive operators \textbf{with trivial
kernel}. (In particular, each member of \(\Bb_+(H)\) is a self-adjoint operator
with dense range.) For two self-adjoint operators \(A, B \in \Bb(H)\) we write
\(A \leqsl B\) or \(B \geqsl A\) if the operator \(B-A\) is positive; that is,
if \(\scalar{(B-A)x}{x}_H \geqsl 0\) for any \(x \in H\). By
a \textit{contraction} we mean a bounded operator between Hilbert spaces whose
operator norm is not greater than \(1\).\par
Whenever \(T\) is an operator, we use \(\DdD(T)\), \(\NnN(T)\), \(\RrR(T)\) and
\(\Gamma(T)\) to denote, respectively, the domain, the kernel, the range and
the graph of \(T\). In addition, \(\overline{\RrR}(T)\) denotes the closure of
\(\RrR(T)\). The operator \(T\) is \textit{closed} if \(\Gamma(T)\) is closed
in the product of Hilbert spaces between which \(T\) acts. \(T\) is
\textit{closable} if the closure of \(\Gamma(T)\) is the graph of
an operator---in that case \(\bar{T}\) denotes the unique operator whose graph
coincides with the closure of \(\Gamma(T)\); \(\bar{T}\) is called
the \textit{closure} of \(T\) and \(\DdD(T)\) a \textit{core} of \(\bar{T}\).
For any closed linear subspace \(K\) of \(H\), \(P_K\) stands for the orthogonal
projection from \(H\) onto \(K\).\par
Basic facts on the multiplicity theory for bounded self-adjoint operators on
separable Hilbert spaces can be found in \S10 of Chapter~IX in \cite{co1}. To
undestand the present paper it is sufficient to know the following result, which
will be used several times in this paper: \textit{the essential supremum of
the multiplicity function of a bounded self-adjoint operator \(A\) acting on
a separable Hilbert space does not exceed \(n \in \{1,2,\ldots\}\) iff \(A\) can
be decomposed as the direct sum of at most \(n\) self-adjoint operators each of
which has a cyclic vector}. Recall that a vector \(v \in H\) is
a \textit{cyclic} vector for a self-adjoint operator \(B \in \Bb(H)\) if \(H =
\overline{\lin}(\{B^n v\dd\ n \geqsl 0\})\).\par
Any closed operator \(T\dd \DdD(T) \to K\) (where \(\DdD(T)\) is a dense
subspace of \(H\)) admits the so-called \textit{polar decomposition} which has
the form
\begin{equation}\label{eqn:polar}
T = Q A
\end{equation}
where \(A\dd \DdD(T) \to H\) is positive self-adjoint in \(H\) and \(Q\dd H \to
K\) is a partial isometry. Moreover, the above \(Q\) and \(A\) are uniquely
determined by \eqref{eqn:polar} and condition \(\NnN(Q) = \NnN(T)\). The above
operator \(A\) satisfies \(A^2 = T^*T\) and is called the \textit{absolute
value} of \(T\) and denoted by \(|T|\). For the details see Theorem~7.20 in
\cite{wei}.\par
For any \(A \in \Bb_+(H)\) we denote by \(\vvV(A)\) and \(\zzZ(A)\)
the collections, respectively, of all isometries \(V \in \Bb(H)\) such that \(AV
\in \Bb_+(H)\), and of all closed linear subspaces \(Z\) of \(H\) such that \(Z
\cap \RrR(A) = \{0\}\). Additionally---for simplicity---for any set \(F\) in
\(H\), \([F]_A\) stands for the set \(\overline{\lin}(\bigcup_{n=0}^{\infty}
A^n(F))\); that is, \([F]_A\) is the smallest closed linear subspace of \(H\)
that contains \(F\) and is invariant under \(A\).\par
All necessary notions concerning reproducing kernels are introduced and
discussed in Section~5.

\section{Isometries of class \(\vvV(A)\)}

In this section \(A\) is fixed and denotes a member of \(\Bb_+(H)\). We begin
with

\begin{pro}{iso-lin}
A function \[\Upsilon_A\dd \vvV(A) \ni V \mapsto \RrR(V)^{\perp} \in \zzZ(A)\]
is a well defined bijection.
\end{pro}
\begin{proof}
First assume that \(V \in \vvV(A)\) and put \(Z = \RrR(V)^{\perp}\). Then \(V^*A
= AV\), hence \(\{0\} = \NnN(AV) = \NnN(V^*A) = A^{-1}(\NnN(V^*)) = A^{-1}(Z
\cap \RrR(A))\), which shows that \(Z \cap \RrR(A) = \{0\}\); that is,
\(\Upsilon_A\) is well defined. Now assume that also \(W \in \vvV(A)\) satisfies
\(\RrR(W)^{\perp} = Z\). Define an operator \(U \in \Uu(H)\) by \(U \df
V^{-1}W\) and observe that \(W = VU\). It follows from the assumptions that \(B
\df AV\) and \(C \df BU\) are bounded positive operators such that \(C^2 = CC^*
= (BU)(U^*B) = B^2\). Since bounded positive operators have unique positive
square roots, we infer that \(B = C\). Since \(B\) has trivial kernel, we get
\(U = I\) and thus \(W = V\). In other words, \(\Upsilon_A\) is one-to-one.\par
Now take any \(Z \in \zzZ(A)\) and define \(D \in \Bb_+(H)\) as the (unique)
positive square root of \(A(I - P_Z)A\). Note that then \(D^2 = ((I-P_Z)A)^*
((I-P_Z)A)\). It follows that \(\NnN(D) = \NnN((I-P_Z)A) = A^{-1}(Z) = \{0\}\)
and hence the range of \(D\) is dense in \(H\). We also infer that \(D =
|(I-P_Z)A|\) and hence---by the properties of the polar decomposition:
\begin{equation}\label{eqn:range}
\RrR(D) = \RrR(((I-P_Z)A)^*) = A(Z^{\perp}).
\end{equation}
(The above formula will be used in the proof of the next result.) Further, since
\begin{equation}\label{eqn:sum}
A^2 = D^2 + A P_Z A,
\end{equation}
we see that \(D^2 \leqsl A^2\) and thus, by \cite{dou} (see also Theorem~2.1
in \cite{f-w}), there is a contraction \(V \in \Bb(H)\) such that \(D = AV\).
Observe that then \(D = V^* A\) and \(\RrR(V^*)\) is dense in \(H\). Moreover,
it follows from \eqref{eqn:sum} that \(AVV^*A = A(I-P_Z)A\). Since \(A\) has
dense range and trivial kernel, we get that \(VV^* = I-P_Z\). Consequently,
\(V\) is a partial isometry with \(\RrR(V) = \RrR(I-P_Z) = Z^{\perp}\). But
the range of \(V^*\) is dense in \(H\) and hence \(V\) is an isometry such that
\(\RrR(V)^{\perp} = Z\). A note that \(V \in \vvV(A)\) (because \(AV = D\))
completes the proof.
\end{proof}

For any \(Z \in \zzZ(A)\) we use \(W_Z\) to denote a unique \(V \in \vvV(A)\)
such that \(\RrR(V)^{\perp} = Z\) (cf.\ \PRO{iso-lin}). Of course, \(W_{\{0\}} =
I\) is the only unitary operator in \(\vvV(A)\).\par
Recall that any isometry \(V \in \Bb(H)\) induces a unique decomposition \(H =
H_u \oplus H_p\) (called \textit{Wold's decomposition}) of the space \(H\) such
that both \(H_u\) and \(H_p\) are invariant under \(V\), \(V\restriction{H_u}\)
is a unitary operator on \(H_u\) and \(\{0\}\) is the only closed linear
subspace \(K\) of \(H_p\) such that \(V(K) = K\). Each of the spaces \(H_u\) and
\(H_p\) can be trivial. The restrictions of \(V\) to \(H_u\) and \(H_p\) are
called by us, respectively, the \textit{unitary} and \textit{pure} parts of
\(V\). We call the isometry \(V\) \textit{pure} if \(H = H_p\). It is well-known
(and easy to prove) that \(H_u = \bigcap_{n=0}^{\infty} \RrR(V^n)\) and \(H_p =
\bigoplus_{n=0}^{\infty} V^n(\RrR(V)^{\perp})\). Any pure isometry \(W\) is
unitarily equivalent to the direct sum of \(\alpha\) copies of the (standard)
unilateral shift where \(\alpha = \dim \RrR(W)^{\perp}\). The cardinal
\(\alpha\) defined above is called by us the \textit{deficiency index} of
the isometry \(W\). For the proofs of the above facts consult, e.g., Chapter~1
in \cite{r-r} (therein pure isometries are called shifts and the deficiency
index of a pure isometry is called its multiplicity).\par
Now we describe Wold's decompositions of members of \(\vvV(A)\).

\begin{thm}{Wold}
Let \(Z \in \zzZ(A)\), \(V = W_Z\) and \(H = H_u \oplus H_p\) be the Wold's
decomposition induced by \(V\). Then:
\begin{itemize}
\item \(H_p = [Z]_A\);
\item \(H_u = \NnN(V-I)\).
\end{itemize}
\end{thm}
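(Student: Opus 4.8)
The plan is to prove both identities at once by identifying the single subspace \(\NnN(V-I)\), showing that it coincides with \(H_u\) and with \([Z]_A^{\perp}\). Throughout write \(M=[Z]_A\) and \(D=AV\); since \(V\in\vvV(A)\) the operator \(D\) is positive, so \(D=V^*A\) as well, and \eqref{eqn:sum} rearranges to \(D^2=A(I-P_Z)A\). Recall from the paragraph preceding the theorem that \(H_u=\bigcap_{n\geqsl0}\RrR(V^n)\) and \(H_p=\bigoplus_{n\geqsl0}V^n(Z)\) with \(Z=\RrR(V)^{\perp}\), that both spaces reduce \(V\), and that \(V\restriction H_u\) is unitary. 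The inclusion \(\NnN(V-I)\subseteq H_u\) is immediate, and for any isometry \(\NnN(V^*-I)=\NnN(V-I)\) (expand \(\|(V-I)y\|^2\) when \(V^*y=y\)). Next I would note that \(\NnN(V-I)\) is \(A\)-invariant: if \(Vw=w\) then \(V^*(Aw)=AVw=Aw\), so \(Aw\in\NnN(V^*-I)=\NnN(V-I)\). Being also contained in \(H_u\subseteq Z^{\perp}\), it is an \(A\)-reducing subspace orthogonal to \(Z\), hence orthogonal to the smallest closed \(A\)-invariant subspace containing \(Z\); that is, \(\NnN(V-I)\subseteq M^{\perp}\).

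Then I would establish the reverse inclusion \(M^{\perp}\subseteq\NnN(V-I)\). For \(w\in M^{\perp}\) one has \(\scalar{A^nw}{z}_H=\scalar{w}{A^nz}_H=0\) for all \(z\in Z\) and \(n\geqsl0\), so \(A^nw\in Z^{\perp}\); in particular \(P_ZAw=0\). Since \(M\) reduces \(A\) (it is \(A\)-invariant and \(A\) is self-adjoint) and, by \(D^2=A(I-P_Z)A\), also reduces \(D\), the computation \(D^2w=A(I-P_Z)Aw=A^2w\) shows \(D^2=A^2\) on \(M^{\perp}\); uniqueness of the positive square root gives \(D=A\) on \(M^{\perp}\), whence \(AVw=Dw=Aw\) and, \(A\) being injective, \(Vw=w\). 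Combining the two inclusions yields \(\NnN(V-I)=M^{\perp}\); as \(\NnN(V-I)\subseteq H_u\), this already gives \(H_p=H_u^{\perp}\subseteq M\).

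The hard part will be the remaining inclusion \(H_u\subseteq\NnN(V-I)\), i.e.\ that the unitary part of \(V\) carries no genuine rotation. Here naive algebra with \(AV=V^*A\) turns out to be circular, and positivity must be used analytically via a compression. Put \(U=V\restriction H_u\) (unitary) and let \(B=P_{H_u}A\restriction H_u\), a positive operator on \(H_u\). Applying \(P_{H_u}\) to the identity \(A(Vx)=V^*(Ax)\) for \(x\in H_u\), and using \(P_{H_u}V^*=U^*P_{H_u}\), one gets \(BU=U^*B\); moreover \(\scalar{BUx}{x}_H=\scalar{Dx}{x}_H\geqsl0\), so \(C:=BU\geqsl0\). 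Now \(B=CU^*\) forces \(B=B^*=UC\), whence \(B^2=BB^*=C^2\) and, by uniqueness of positive square roots, \(B=C\); therefore \(UB=B\), i.e.\ \(\RrR(B)\subseteq\NnN(U-I)\). Consequently \(\NnN(U-I)^{\perp}\subseteq\NnN(B)\) inside \(H_u\), and for \(x\) in this complement \(P_{H_u}Ax=0\) gives \(\scalar{Ax}{x}_H=0\), so \(x=0\) by injectivity of \(A\). Hence \(U=I\), that is \(H_u=\NnN(V-I)\).

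Collecting everything, \(H_u=\NnN(V-I)=M^{\perp}\), and therefore \(H_p=H_u^{\perp}=M=[Z]_A\), which is exactly the pair of asserted identities. I expect the compression step of the third paragraph to be the crux: it is the only place where the positivity—rather than mere self-adjointness—of both \(A\) and \(D\), together with the injectivity of \(A\), is indispensable, the earlier steps being either formal or reducible to uniqueness of positive square roots on reducing subspaces.
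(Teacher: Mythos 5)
Your proof is correct, and although it rests on the same two pillars as the paper's argument---uniqueness of bounded positive square roots and the strict positivity (hence injectivity) of \(A\)---its architecture is genuinely different and leaner. The paper first constructs the auxiliary operator \(Tx = Dx \oplus P_Z Ax\) and a unitary \(Q\) with \(QT = A\) (see \eqref{eqn:aux1}--\eqref{eqn:aux3}), and uses this model of \(V\) together with a density argument (\(A([Z]_A^{\perp})\) dense in \([Z]_A^{\perp}\)) to show that \(V\) fixes \([Z]_A^{\perp}\); you reach the same conclusion directly from \(AVw = Dw = Aw\) and \(\NnN(A) = \{0\}\), bypassing \(T\), \(Q\) and the density step entirely. (Both proofs obtain \(D = A\) on \([Z]_A^{\perp}\) from \eqref{eqn:sum}; your phrasing via reducing subspaces is equivalent to the paper's polynomial approximation, though you should say one word more at ``also reduces \(D\)'': \(P_{[Z]_A}\) commutes with \(D^2\) by the displayed identity, hence with \(D = (D^2)^{1/2}\).) For the unitary part, the paper restricts to \(E = [Z]_A\), takes the Wold decomposition of \(V\restriction{E}\) and a \(2\times 2\) block matrix of \(A\), and must additionally kill the off-diagonal block via \(X(I-S) = 0\) and density of \(\RrR(I-S)\); your compression \(B = P_{H_u}A\restriction{H_u}\) on the global Wold decomposition runs the same square-root trick (\(B^2 = (BU)^2\), so \(BU = B\)) but replaces the off-diagonal analysis with the kernel argument \(\NnN(U-I)^{\perp} \subseteq \NnN(B) = \{0\}\)---which is in fact the same strict-positivity mechanism the paper invokes tacitly when it passes from \(B = BU\) to \(U = I\). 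Your opening observation that \(\NnN(V-I)\) is \(A\)-reducing and orthogonal to \(Z\), hence contained in \([Z]_A^{\perp}\), is not in the paper at all (there the inclusion emerges only as a byproduct of showing \(V\restriction{E}\) is pure) and is what lets you decouple the two asserted identities through the single equality \(H_u = \NnN(V-I) = [Z]_A^{\perp}\). What the paper's route buys is the explicit unitary model \(Vx = Q(x \oplus 0)\) of \(W_Z\) and a direct exhibition of \(E_p\) as \(A\)-invariant; what yours buys is brevity and a conceptually crisper statement of where the unitary part lives.
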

\begin{proof}
We continue the notation introduced in the proof of \PRO{iso-lin}: let \(D\)
denote the positive square root of \(A(I-P_Z)A\). Then the formulas
\eqref{eqn:range} and \eqref{eqn:sum} are valid. Moreover, we have \(D = AV\)
and \(\NnN(D) = 0\). Define a bounded operator \(T\dd H \to H \oplus Z\) by
\begin{equation}\label{eqn:aux1}
Tx = Dx \oplus P_Z Ax \qquad (x \in H).
\end{equation}
We claim that \(\RrR(T)\) is dense in \(H \oplus Z\). To convince ourselves of
that, fix \(u \in H\) and \(v \in Z\) such that \(u \oplus v \perp \RrR(T)\).
This means that for any \(x \in H\),
\[0 = \scalar{Tx}{u \oplus v}_{H \oplus Z} = \scalar{x}{Du}_H + \scalar{x}{Av}_H
= \scalar{x}{Du+Av}_H.\]
So, \(Du = -Av\), but \(Du \in A(Z^{\perp})\) (by \eqref{eqn:range}) and \(Av
\in A(Z)\). Since \(A\) is one-to-one, we get \(Du = 0 = Av\) and therefore
\(u = v = 0\) (as both \(A\) and \(D\) have trivial kernels).\par
Further, \eqref{eqn:aux1} combined with \eqref{eqn:sum} yields that \(\|Tx\|^2 =
\|Ax\|^2\) for any \(x \in H\). Since both \(T\) and \(A\) have dense ranges, we
infer that there exists a (unique) unitary operator \(Q\dd H \oplus Z \to H\)
such that
\begin{equation}\label{eqn:Q}
QT = A.
\end{equation}
Consequently, \(T = Q^{-1}A\) and hence (by \eqref{eqn:aux1}) \(D = PQ^{-1}A\)
where \(P\dd H \oplus Z \to H\) is the projection onto the first coordinate. But
\(D = V^*A\) and \(A\) has dense range. So, \(V^* = PQ^{-1}\). Equivalently,
\(V = QP^*\). In other words,
\begin{equation}\label{eqn:aux3}
Vx = Q(x \oplus 0) \qquad (x \in H).
\end{equation}
For simplicity, denote by \(E\) the subspace \([Z]_A\). Since \(A^2(E) \subset
E\) and \((AP_ZA)(E) \subset A(Z) \subset E\), we infer from \eqref{eqn:sum}
that \(D^2(E) \subset E\) and that \(D(E) \subset E\) as well.\par
Now fix \(x \in E^{\perp}\). Since \(x \perp A^n(Z)\), it follows from
the self-adjointness of \(A\) that \(A^n x \perp Z\) for any \(n \geqsl 0\).
Hence \(P_Z A^n x = 0\) and by a simple induction argument applied to
\eqref{eqn:sum} we get \(A^{2n}x = D^{2n}x\) for any \(n \geqsl 0\). Since there
is a sequence of polynomials \(p_1,p_2,\ldots\) such that \(p_n(A^2) \to A\)
and \(p_n(D^2) \to D\) in the operator norm as \(n \to \infty\), we obtain
\(A x = D x\). So, thanks to \eqref{eqn:aux3}, \eqref{eqn:aux1} and
\eqref{eqn:Q}, \(V(Ax) = V(Dx) = Q(Dx \oplus 0) = Q(Tx) = Ax\). But
\(A(E^{\perp}) \subset E^{\perp}\) and \(A\) is one-to-one, thus
\(A(E^{\perp})\) is dense in \(E^{\perp}\). We infer that \(V z = z\) for all
\(z \in E^{\perp}\). Since \(V\) is an isometry, \(V(E) \subset E\). So, to
finish the whole proof, it is sufficient to show that \(V\restriction{E}\) is
a pure isometry on \(E\) (recall that \(1\) is an eigenvalue of no pure
isometry). To this end, we restrict our further considerations to the space
\(E\) (note that \(E\) is invariant for all \(A\), \(D\) and \(V\) and that
\(Z \subset E\) and \(A\restriction{E} \cdot V\restriction{E} =
D\restriction{E}\)). In other words, we assume that \(H = E\). Although
everywhere below we will identify \(A\), \(V\) and \(D\) with their restrictions
to \(E\), we shall write \(E\) instead of \(H\) to avoid confusion.\par
Let
\begin{equation}\label{eqn:aux4}
E = E_u \oplus E_p
\end{equation}
be the Wold's decomposition induced by \(V\). We only need to show that \(E_p =
E\). To this end, put \(U \df V\restriction{E_u} \in \Uu(E_u)\) and \(S \df
V\restriction{E_p} \in \Bb(E_p)\) and note that \(S\) is a pure isometry on
\(E_p\) and \(Z \subset E_p\) (as \(Z = \RrR(V)^{\perp}\)). Represent \(A\) as
a block matrix \(A = \begin{pmatrix}B & X\\X^* & C \end{pmatrix}\) with respect
to the decomposition \eqref{eqn:aux4} (that is, \(B\dd E_u \to E_u\), \(X\dd E_p
\to E_u\) and \(C\dd E_p \to E_p\)). Then \(B \in \Bb_+(E_u)\), \(C \in
\Bb_+(E_p)\) and \(D = A V = \begin{pmatrix}BU & XS\\X^*U & CS\end{pmatrix}\).
Since \(D \in \Bb_+(E)\), we conclude that \(BU \in \Bb_+(E_u)\) and
\begin{equation}\label{eqn:aux5}
UX\ (= (X^*U)^*) = XS.
\end{equation}
Then \(B^2 = (BU) (BU)^* = (BU)^2\). Consequently (by the uniqueness of
the positive square root), \(B = BU\) and hence \(U = I\). So, \eqref{eqn:aux5}
transforms to \(X(I-S) = 0\). Since \(S\) is a pure isometry, the range of
\(I-S\) is dense in \(E_p\). We conclude that \(X = 0\). So, \(A = B \oplus C\)
and \(E_p\) is invariant under \(A\). Hence \(E = [Z]_A \subset E_p\) and we
are done.
\end{proof}

The above result shows that for \(Z \in \zzZ(A)\) the structure of the isometry
\(W_Z\) is completely determined by two cardinal numbers: \(\alpha(Z) \df
\dim(Z)\) and \(\beta_A(Z) = \dim([Z]_A^{\perp})\). It is a natural question of
when it may happen (for a fixed operator \(A\)) that \(\beta_A(Z) = 0\) for some
\(Z\); that is, when \(\vvV(A)\) contains a pure isometry with a pre-set
deficiency index. This question is fully answered in the next three
propositions.

\begin{pro}{fin}
For any \(n \in \{1,2,\ldots\}\) \tfcae
\begin{enumerate}[\upshape(i)]
\item there exists a pure isometry \(V \in \vvV(A)\) with deficiency index
 \(n\);
\item \(H\) is separable, \(A\) is non-invertible and the essential supremum of
 the multiplicity function of \(A\) does not exceed \(n\);
\item \(A\) is non-invertible and there is a finite subset \(F\) of \(H\) such
 that \([F]_A = H\) and \(\card(F) \leqsl n\).
\end{enumerate}
\end{pro}

Before giving a proof, let us first separate a special case of the above result
that will be applied several times in the sequel:

\begin{lem}{single}
\TFCAE
\begin{enumerate}[\upshape(i)]
\item there exists a pure isometry \(V \in \vvV(A)\) with deficiency index
 \(1\);
\item \(H\) is separable, \(A\) is non-invertible and has simple spectrum.
\end{enumerate}
\end{lem}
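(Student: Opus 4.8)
The plan is to deduce \LEM{single} as the special case $n=1$ of \PRO{fin}, so the real task is to unwind what the three conditions of \PRO{fin} say when $n=1$. Condition (i) of the lemma is literally condition (i) of the proposition with $n=1$, and I would simply observe that the equivalence (i)$\Leftrightarrow$(ii) of \LEM{single} follows once I match condition (ii) of the lemma against condition (ii) of \PRO{fin} for $n=1$. So the entire proof reduces to the assertion that, for a bounded self-adjoint operator $A$ on a separable Hilbert space, ``the essential supremum of the multiplicity function of $A$ does not exceed $1$'' is the same as ``$A$ has simple spectrum''.

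\medskip

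The key step is therefore to invoke the multiplicity-theory fact quoted in the Notation section: the essential supremum of the multiplicity function of a bounded self-adjoint operator on a separable Hilbert space does not exceed $n$ iff $A$ decomposes as a direct sum of at most $n$ operators each possessing a cyclic vector. Taking $n=1$, this says the essential supremum does not exceed $1$ iff $A$ itself has a cyclic vector, i.e.\ iff there is a $v \in H$ with $H = \overline{\lin}(\{A^n v\dd\ n \geqsl 0\})$. Having a cyclic vector is precisely what it means for $A$ to have simple spectrum. Thus in the separable setting condition (ii) of \LEM{single} is verbatim condition (ii) of \PRO{fin} with $n=1$, and the equivalence (i)$\Leftrightarrow$(ii) of the lemma is immediate from the proposition.

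\medskip

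The one point requiring a word of care is the phrase ``has simple spectrum'': I would make explicit that I am using it as a synonym for ``admits a cyclic vector'' (this is the standard reading, and it is the reading forced by the abstract and by the parenthetical remark after \THM{main}, where the two are identified). With that identification fixed, no separate argument is needed: the proof is the single sentence that \LEM{single} is \PRO{fin} specialized to $n=1$, together with the translation ``multiplicity $\leqsl 1$'' $\Leftrightarrow$ ``cyclic vector'' $\Leftrightarrow$ ``simple spectrum'' supplied by the recalled multiplicity theorem. I do not anticipate any genuine obstacle here, since \PRO{fin} is assumed available; the only thing to get right is the terminological bookkeeping so that the reader sees condition (ii) of the lemma is not a new hypothesis but exactly the $n=1$ instance already proved.
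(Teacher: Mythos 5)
Your reduction is circular in the context of this paper. You assume \PRO{fin} is ``available'' and specialize it to \(n=1\), but the paper's proof of \PRO{fin} itself invokes \LEM{single}: in the implication (ii)\(\implies\)(i) of \PRO{fin}, after decomposing \(A\) (up to unitary equivalence) as \(\bigoplus_{j=1}^n C_j\) with each \(C_j\) positive, non-invertible, injective and with simple spectrum, the paper concludes by \emph{applying \LEM{single} to each \(C_j\)} to produce the pure isometries \(S_j\) of deficiency index \(1\). Indeed, for \(n=1\) the hard implication of \PRO{fin} \emph{is} \LEM{single}, which is exactly why the paper proves the lemma first (``let us first separate a special case of the above result that will be applied several times in the sequel'') and only then proves the proposition. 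So your proposal, while terminologically careful (the identification ``essential supremum of the multiplicity function \(\leqsl 1\)'' \(\iff\) ``cyclic vector'' \(\iff\) ``simple spectrum'' is correct and is the reading the paper intends), contains no actual argument for either implication: it defers all content to a result whose proof presupposes the lemma.

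What is genuinely missing is the construction behind (ii)\(\implies\)(i). The paper models \(A\) as the multiplication operator \(M_{\mu}\) on \(L^2(\mu)\), notes that by \THM{Wold} and \PRO{iso-lin} it suffices to exhibit \(u \notin \RrR(M_{\mu})\) with \([u]_{M_{\mu}} = L^2(\mu)\), and builds one explicitly: non-invertibility gives \(f \in L^2(\mu)\) with \(\int_K |f(t)|^2 t^{-2} \dint{\mu}(t) = \infty\), and then \(u \df 1 + |f|\) is again outside the range while the description of invariant subspaces of \(M_{\mu}\) (the sets \(\{g\dd g = 0\ \mu\textup{-a.e.\ on }\sigma\}\)) forces \([u]_{M_{\mu}} = L^2(\mu)\) because \(u\) vanishes nowhere. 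The subtlety your proposal never confronts is precisely this: a cyclic vector and a vector outside \(\RrR(A)\) exist separately for trivial reasons, but one needs a \emph{single} vector that is simultaneously cyclic and avoids the range, and that requires an argument. Likewise the easy direction (i)\(\implies\)(ii) rests on \THM{Wold} (giving \(H = [z]_A\) for a unit vector \(z\) spanning \(\RrR(V)^{\perp}\)), not on multiplicity bookkeeping. To repair your proof you would have to either reprove \PRO{fin} for \(n=1\) without \LEM{single}---which amounts to writing the paper's proof of the lemma---or supply the \(L^2(\mu)\) construction directly.
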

\begin{proof}
If \(V \in \vvV(A)\) is pure and has deficiency index \(1\), then
\(\RrR(V)^{\perp}\) is generated by a single unit vector, say \(z\). In
particular, \(A\) is non-invertible (since \(z \notin \RrR(A)\)). Moreover, it
follows from \THM{Wold} that \(H = [z]_A\). So, \(z\) is a cyclic vector for
\(A\) and therefore \(H\) is separable and \(A\) has simple spectrum.\par
To prove the reverse implication, we model \(A\) as the multiplication operator
\(M_{\mu}\) by independent variable on \(L^2(\mu)\) where \(\mu\) is
a probabilistic Borel measure on the spectrum \(K \subset [0,\|A\|]\) of \(A\)
(consult, e.g., Theorem~3.4 in Chapter~IX of \cite{co1}). That is,
\((M_{\mu} f)(t) = t f(t)\) for any \(f \in L^2(\mu)\) and \(t \in K\). Since
\(A\) is one-to-one, \(\mu(\{0\}) = 0\). According to \THM{Wold}, we only need
to show that there is \(u \notin \RrR(M_{\mu})\) such that \([u]_{M_{\mu}} =
L^2(\mu)\). Since \(A\) (that is, \(M_{\mu}\)) is non-invertible, there is \(f
\in L^2(\mu)\) that is not a value of \(M_{\mu}\). This means that \(\int_K
\frac{|f(t)|^2}{t^2} \dint{\mu}(t) = \infty\). By an analogous reasoning, also
\(u \df 1+|f| \in L^2(\mu)\) is not a value of \(M_{\mu}\). It follows from
the description of all (closed linear) invariant subspaces of self-adjoint
operators of the form \(M_{\mu}\) (consult, e.g., Corollary~6.9 in Chapter~IX of
\cite{co1}) that there is a Borel set \(\sigma \subset K\) such that
\([u]_{M_{\mu}} = \{g \in L^2(\mu)\dd\ g = 0\ \textup{\(\mu\)-a.e. on
\(\sigma\)}\}\). But \(u \in [u]_{M_{\mu}}\) and therefore \(\mu(\sigma) = 0\).
Consequently, \([u]_{M_{\mu}} = L^2(\mu)\) and we are done.
\end{proof}

\begin{proof}[Proof of \PRO{fin}]
First of all, note that all items (i)--(iii) imply that \(H\) is separable and
\(A\) is non-invertible. So, everywhere below we assume these two properties:
that \(H\) is separable and \(A\) is non-invertible.\par
Implication ``(i)\(\implies\)(iii)'' is immediate: if \(V \in \vvV(A)\)
witnesses (i), then \(Z \df \RrR(V)^{\perp}\) has an orthonormal basis
consisting of \(n\) vectors, say \(e_1,\ldots,e_n\). Then
\([\{e_1,\ldots,e_n\}]_A = [Z]_A\) and it follows from \THM{Wold} that \([Z]_A =
H\).\par
Now assume (iii) holds. Let \(F\) be as specified therein. We claim that there
are \(k \in \{1,\ldots,n\}\) and unit vectors \(z_1,\ldots,z_k\) such that
\begin{equation}\label{eqn:aux6}
H = \bigoplus_{j=1}^k [z_j]_A.
\end{equation}
To this end, we proceed by induction on \(n\). When \(n = 1\), our conclusion
easily follows. So, assume \(n > 1\), choose any \(a \in F\), put \(F_0 \df F
\setminus \{a\}\), \(H_0 \df [F_0]_A\) and \(A_0 \df A\restriction{H_0} \in
\Bb_+(H_0)\) and apply the induction hypothesis to \(H_0\) and \(A_0\) (and
\(F_0\)): there are \(\ell \in \{1,\ldots,n-1\}\) and unit vectors \(z_1,\ldots,
z_{\ell}\) such that \(H_0 = \bigoplus_{j=1}^{\ell} [z_j]_{A_0}\). If \(H_0 =
H\), just put \(k = \ell\) to finish the proof of \eqref{eqn:aux6}. When \(H_0
\neq H\), proceed as follows. Since \(H = [F]_A\) coincides with the closure of
\([a]_A + [F_0]_A = [a]_A + H_0\), the subspace \(P_{H_0^{\perp}}([a]_A)\) is
dense in \(H_0^{\perp}\). Further, \(P_{H_0^{\perp}}\) commutes with \(A\) and
thus \(P_{H_0^{\perp}}([a]_A)\) is dense in \([P_{H_0^{\perp}}(a)]_A\). So,
\([P_{H_0^{\perp}}(a)]_A = H_0^{\perp}\), \(b \df P_{H_0^{\perp}}(a) \neq 0\)
and it is sufficient to define \(k\) as \(\ell+1\) and \(z_k\) as
\(\frac{b}{\|b\|}\) to get \eqref{eqn:aux6}. Since each of the subspaces
\([z_j]_A\) is invariant under \(A\) (and \(z_j\) is a cyclic vector for
the restriction of \(A\) to \([z_j]_A\)), we see that \(A\) is the direct sum of
at most \(n\) self-adjoint operators with simple spectrum---which yields
(ii).\par
Finally, assume that (ii) is fulfilled. This means that \(A\) is the direct sum
of at most \(n\) self-adjoint operators with simple spectrum, say \(A =
\bigoplus_{j=1}^k A_j\) where \(k \leqsl n\) (and \(A_j\) has simple spectrum).
Then each of \(A_j\) is positive with trivial kernel and one of them, say
\(A_1\), is non-invertible. Using e.g. the spectral measure of \(A_1\), we can
decompose \(A_1\) as \(A_1 = \bigoplus_{m=1}^{\infty} B_m\) where each \(B_m\)
acts on a non-zero Hilbert space and \(\lim_{m\to\infty} \|B_m\| = 0\). Now we
decompose the set of all positive integers as the union of \(n\) pairwise
disjoint sets \(J_1,\ldots,J_n\) in a way such that \(J_1\) is infinite and for
any \(j \in \{2,\ldots,n\}\):
\begin{itemize}
\item if \(A_j\) is non-invertible, then \(J_j = \varempty\);
\item if \(A_j\) is invertible, then \(J_j\) is infinite and \(\|B_s\| <
 1/\|A_j^{-1}\|\) for each \(s \in J_j\).
\end{itemize}
Now define operators \(C_1,\ldots,C_n\) as follows:
\begin{itemize}
\item \(C_1 = \bigoplus_{s \in J_1} B_s\);
\item \(C_j = A_j\) if \(j > 1\) and \(A_j\) is non-invertible;
\item \(C_j = (\bigoplus_{s \in J_j} B_s) \oplus A_j\) in all other cases.
\end{itemize}
It follows from the above construction that:
\begin{itemize}
\item \(A\) is unitarily equivalent to \(\bigoplus_{j=1}^n C_j\);
\item each of \(C_j\) is positive, non-invertible and has trivial kernel;
\item each of \(C_j\) has simple spectrum.
\end{itemize}
Only the last of these properties can be seen as non-trivial, so let us briefly
explain it. Since \(B_j \df \bigoplus_{s \in J_j} B_s\) is the restriction of
\(A_1\) to an invariant subspace of \(A_1\) and \(A_1\) has simple spectrum,
\(B_j\) has simple spectrum as well. Finally, if \(j > 1\) and \(A_j\) is
invertible, then \(\|B_j\| < 1/\|A_j^{-1}\|\) which implies that the spectra
of \(B_j\) and \(A_j\) are disjoint. But then \(B_j \oplus A_j\) has simple
spectrum, as both \(A_j\) and \(B_j\) have so.\par
To conclude the proof, apply \LEM{single} to each of \(C_j\): there is a pure
isometry \(S_j\) with deficiency index \(1\) (acting on an appropriate Hilbert
space) such that \(C_j S_j\) is positive. Then also \((\bigoplus_{j=1}^n C_j)
(\bigoplus_{j=1}^n S_j)\) is positive. So, we complete the proof by noticing
that \(\bigoplus_{j=1}^n S_j\) is a pure isometry with deficiency index \(n\)
and that \(A\) is unitarily equivalent to \(\bigoplus_{j=1}^n C_j\).
\end{proof}

\begin{pro}{aleph}
\TFCAE
\begin{enumerate}[\upshape(i)]
\item there exists a pure isometry \(V \in \vvV(A)\) with deficiency index
 \(\aleph_0\);
\item \(H\) is separable and \(A\) is non-invertible.
\end{enumerate}
\end{pro}
\begin{proof}
The argument is similar to a part of the previous proof and goes as follows. It
is clear that (ii) is implied by (i). Assume (i) holds and let \(B\) be
a maximal set of unit vectors in \(H\) such that \(B \setminus \RrR(A) \neq
\varempty\) and \([b]_A \perp [c]_A\) for distinct \(b, c \in B\). Then \(B\) is
non-empty and (at most) countable, and
\begin{equation}\label{eqn:aux7}
H = \bigoplus_{b \in B} [b]_A.
\end{equation}
We fix \(e \in B\) such that \(e \notin \RrR(A)\). We infer that
\(A_0 \df A\restriction{[e]_A}\) is non-invertible (as an operator in
\(\Bb([e]_A)\)). So, we may decompose \(A_0\) (using, e.g., the spectral measure
of \(A_0\)) as \(A_0 = \bigoplus_{n=1}^{\infty} D_n\) where each \(D_n\) acts on
a non-zero Hilbert space and \(\lim_{n\to\infty} \|D_n\| = 0\). Denote by \(C\)
the set of all \(b \in B\) such that \(A\restriction{[b]_A}\) is invertible (in
\(\Bb([b]_A)\)). We divide the set of all positive integers into pairwise
disjoint sets \(J_b\ (b \in B)\) in a way such that:
\begin{itemize}
\item \(J_e\) is infinite;
\item if \(b \in C\), then \(J_b\) is infinite and \(\|D_s\| <
 1/\|(A\restriction{[b]_A})^{-1}\|\) for all \(s \in J_b\);
\item \(J_b = \varempty\) in all other cases.
\end{itemize}
Finally, decompose \(J_e\) as the union of pairwise disjoint infinite sets
\(I_0,I_1,\ldots\) Now we define operators \(T_b\ (b \in B)\) and \(T^{(n)}\
(n=1,2,\ldots)\) by the rules:
\begin{itemize}
\item \(T_e = \bigoplus_{k \in I_0} D_k\);
\item \(T_b = (\bigoplus_{s \in J_b} D_s) \oplus (A\restriction{[b]_A})\) for
 all \(b \in C\);
\item \(T_b = A\restriction{[b]_A}\) for all \(b \in B \setminus (C \cup
 \{e\})\);
\item \(T^{(n)} = \bigoplus_{k \in I_n} D_k\) for all \(n > 0\).
\end{itemize}
For simplicity, gather all the operators defined above in a sequence \(L_1,
L_2,\ldots\) Since all the sets \(I_n\ (n \geqsl 0)\) and \(J_b\ (b \in B
\setminus \{e\})\) are pairwise disjoint and their union coincides with the set
of all positive integers, one shows that \(A\) is unitarily equivalent to
\(\bigoplus_{n=1}^{\infty} L_n\) (thanks to \eqref{eqn:aux7}). Furthermore, it
follows from the construction that each of \(L_n\) is non-invertible and has
a cyclic vector (cf.\ the proof of \PRO{fin}). So, thanks to \LEM{single} for
any \(n > 0\) there is a pure isometry \(S_n\) with deficiency index \(1\) (that
acts on a suitable Hilbert space) such that \(L_n S_n\) is positive. Then \(S
\df \bigoplus_{n=1}^{\infty} S_n\) is a pure isometry with deficiency index
\(\aleph_0\) and belongs to \(\vvV(\bigoplus_{n=1}^{\infty} L_n)\). This easily
implies that (i) holds.
\end{proof}

\begin{pro}{non-sep}
Let \(\alpha\) be an uncountable cardinal number and \(E\) denote the spectral
measure of \(A\). \TFCAE
\begin{enumerate}[\upshape(i)]
\item there exists a pure isometry \(V \in \vvV(A)\) with deficiency index
 \(\alpha\);
\item \(\dim(H) = \alpha\) and there is \(Z \in \zzZ(A)\) such that \(\dim(Z) =
 \alpha\);
\item \(\dim(H) = \alpha\) and \(\dim \RrR(E((0,\epsi))) = \alpha\) for any
 \(\epsi > 0\).
\end{enumerate}
\end{pro}
\begin{proof}
First of all, note that all conditions specified in the proposition imply that
\begin{equation}\label{eqn:aux8}
\dim(H) = \alpha.
\end{equation}
Thus, everywhere below we assume \eqref{eqn:aux8}. Note also that (ii) is
implied by (i) thanks to \PRO{iso-lin}.\par
Now assume (ii) holds and let \(Z\) be as specified therein. Let \(B\) be
a maximal set of unit vectors in \(Z\) such that \([b]_A \perp [c]_A\) for all
distinct \(b, c \in B\). We claim that
\begin{equation}\label{eqn:aux9}
\card(B) = \alpha.
\end{equation}
To convince oneself of that, assume \(\card(B) < \alpha\). Then also
\(\dim(\bigoplus_{b \in B} [b]_A) < \dim(Z)\) and therefore there exists a unit
vector \(z \in Z\) orthogonal to \(\bigoplus_{b \in B} [b]_A\). This yields that
\([b]_A \perp [z]_A\) for any \(b \in B\) which contradicts the maximality of
\(B\). So, \eqref{eqn:aux9} holds.\par
Now for each \(b \in B\) denote by \(P_b\) the orthogonal projection onto
\([b]_A\). Additionally, for a fixed \(\epsi > 0\) set \(Q \df E((0,\epsi))\).
Note that \(Q\) commutes with all \(P_b\) and that to get (iii) it is sufficient
to show that \(Q P_b \neq 0\) for any \(b \in B\). To this end, fix \(b \in B\)
and assume that, on the contrary, \(Q P_b = 0\). Then \([b]_A \subset
E([\epsi,\infty))\). This means that \(\scalar{Ax}{x}_H \geqsl \epsi \|x\|^2\)
for any \(x \in [b]_A\). Consequently, \(A\restriction{[b]_A}\) is invertible.
But \([b]_A\) is invariant for \(A\) and \(b \notin \RrR(A)\) which contradicts
the invertibility of that restriction.\par
Finally, assume (iii) holds. We want to show that there is \(V\) witnessing (i).
Let \(\{E_s\}_{s \in S}\) be a maximal family of closed linear subspaces such
that:
\begin{itemize}
\item \(E_s\) is separable and \(A(E_s) \varsubsetneq E_s\) for all \(s \in S\);
\item \(E_s \perp E_t\) for any distinct \(s, t \in S\).
\end{itemize}
We claim that
\begin{equation}\label{eqn:aux10}
\card(S) = \alpha.
\end{equation}
To see this, set
\begin{equation}\label{eqn:aux11}
F \df (\bigoplus_{s \in S} E_s)^{\perp}
\end{equation}
and note that \(A(F) \subset F\). Actually, we have \(A(F) = F\). Indeed, if
there was \(z \in F \setminus A(F)\), then \([z]_A\) would be separable and
orthogonal to all \(E_s\) which would contradict the maximality of the family
\(\{E_s\}_{s \in S}\).\par
Further, it follows from the inverse mapping theorem that for some \(\epsi >
0\), \(\scalar{Ax}{x}_H \geqsl \epsi \|x\|^2\) for all \(x \in F\). This
inequality implies that \(E((0,\epsi)) P_F = 0\). Consequently,
\(\RrR(E((0,\epsi))) \subset F^{\perp} = \bigoplus_{s \in S} E_s\). So,
the conclusion of (iii) and the separability of all \(E_s\) yields
\eqref{eqn:aux10}.\par
Further, keeping the setting \eqref{eqn:aux11}, decompose \(F\) as \(F =
\bigoplus_{t \in T} W_t\) where each \(W_t\) is a separable closed linear
subspace of \(F\) invariant under \(A\). Then \(\card(T) \leqsl \alpha\) and
hence there is a one-to-one mapping \(\kappa\dd T \to S\). Now define, for \(s
\in S\), \(H_s\) as follows:
\begin{itemize}
\item \(H_s = E_s\) provided \(s \notin \kappa(T)\);
\item \(H_s = E_s \oplus W_{\kappa^{-1}(s)}\) otherwise.
\end{itemize}
Observe that \(H = \bigoplus_{s \in S} H_s\), and for any \(s \in S\), \(H_s\)
is separable and \(A(H_s) \varsubsetneq H_s\). So, it follows from \PRO{aleph}
that for any \(s \in S\) there is a pure isometry \(V_s\dd H_s \to H_s\) that
belongs to \(\vvV(A\restriction{H_s})\). To complete the proof, define \(V \in
\vvV(A)\) by \(V = \bigoplus_{s \in S} V_s\) and note that \(V\) is a pure
isometry with deficiency index \(\alpha\) (by \eqref{eqn:aux10}).
\end{proof}

Observe that item (ii) of the above theorem---as well as the collection
\(\zzZ(A)\)---depends only on the range of the operator \(A\). Further
conditions (also formulated only in terms of operator ranges) equivalent to this
item are a subject of \THM{pure} from the next section.

\begin{proof}[Proof of \THM{main}]
Just observe that the equivalence of conditions (i) and (ii) immediately follows
from \PRO[s]{fin}, \ref{pro:aleph} and \ref{pro:non-sep}, whereas the remaining
(additional) part of the theorem is a reformulation of \LEM{single}.
\end{proof}

\begin{rem}{simple}
It was shown earlier---in \cite{g-p}---that one-to-one Hankel operators
satisfying double positivity condition have simple spectra. Both the proofs---in
the paper cited above and ours---are based on the same idea, which is a kind of
folklore in operator theory. For the details, consult \cite{kat} or
Proposition~2.5 in \cite{g-p} together with the preceding paragraph (therein).
\end{rem}

The following result is a simple consequence of a deep theorem from \cite{mpt}.
Here we give its brief proof.

\begin{cor}{Hankel}
The essential supremum of the multiplicity function of a positive Hankel
operator \(A\) with trivial kernel does not exceed \(2\).
\end{cor}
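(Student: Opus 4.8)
The plan is to produce, entirely explicitly, a pure isometry of deficiency index $2$ lying in $\vvV(A)$, and then to let \PRO{fin} do the rest. Recall that $A\dd \ell_2 \to \ell_2$ satisfies the Hankel identity $AS = S^*A$, where $S$ is the unilateral shift, and that $A$ is positive with $\NnN(A) = \{0\}$, so $A \in \Bb_+(\ell_2)$. The essential point is a choice: the identity $AS = (AS)^*$ only guarantees that $AS$ is self-adjoint, so there is no reason for $S$ itself to belong to $\vvV(A)$; but the \emph{square} $S^2$ will, and $S^2$ still has a finite deficiency index suitable for \PRO{fin}.

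First I would apply the Hankel identity twice to compute
\[
AS^2 = (AS)S = S^*AS.
\]
From this form it is immediate that $AS^2$ is self-adjoint and, in fact, positive, since for every $x \in \ell_2$
\[
\scalar{AS^2 x}{x}_{\ell_2} = \scalar{S^*ASx}{x}_{\ell_2} = \scalar{A(Sx)}{Sx}_{\ell_2} \geqsl 0.
\]
To conclude that $S^2 \in \vvV(A)$ I must further check $AS^2 \in \Bb_+(\ell_2)$, i.e.\ that $AS^2$ has trivial kernel. If $AS^2 x = 0$, then the displayed quadratic form vanishes, so $\scalar{A(Sx)}{Sx}_{\ell_2} = 0$; positivity of $A$ then forces $A(Sx) = 0$, and since $A$ is one-to-one and $S$ is an isometry we obtain $Sx = 0$ and hence $x = 0$. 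Thus $S^2 \in \vvV(A)$.

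Since $S^2 e_n = e_{n+2}$, the isometry $S^2$ is pure with $\RrR(S^2)^{\perp} = \lin\{e_0,e_1\}$, so its deficiency index equals $2$. Therefore condition (i) of \PRO{fin} is satisfied with $n = 2$, and the implication (i)$\implies$(ii) of that proposition delivers exactly the desired conclusion: $\ell_2$ is (trivially) separable, $A$ is non-invertible, and the essential supremum of the multiplicity function of $A$ does not exceed $2$. As a free by-product one reads off that a positive Hankel operator with trivial kernel can never be invertible.

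I do not anticipate any real obstacle: once one passes to $S^2$, the whole argument collapses to the single algebraic identity $AS^2 = S^*AS$, after which positivity, triviality of the kernel, and the value $2$ of the deficiency index are all automatic, and the multiplicity estimate is handed over to \PRO{fin}. The only genuinely creative step—and the one worth flagging—is precisely the decision to work with $S^2$ rather than $S$: positivity of $A$ alone does not render $AS$ positive, but it does render $S^*AS$ positive, and this is what makes $S^2$, with its deficiency index $2$, the right isometry to feed into the machinery of Section~2.
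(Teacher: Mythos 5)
Your proof is correct and follows essentially the same route as the paper: both hinge on observing that \(S^2\) is a pure isometry of deficiency index \(2\) with \(AS^2 = S^*AS\) positive, and then invoking the characterization from Section~2 (you cite \PRO{fin} directly, the paper cites \THM{main} with \(\mM = 2\), which rests on the same proposition). Your explicit verification that \(AS^2\) has trivial kernel is a harmless extra, since injectivity of \(AV\) is automatic for any isometry \(V\) once \(A\) is one-to-one.
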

\begin{proof}
If \(S\) denotes the (classical) unilateral shift, then \(S^2\) is a pure
isometry with deficiency index \(2\) and \(A S^2\) is a positive operator
(because \(A\) is positive and Hankel). So, the assertion follows from
\THM{main} (for \(\mM = 2\)).
\end{proof}

In \cite{g-p} the authors showed that for any bounded Hankel operator \(A\)
satisfying double positivity condition the operator
\(A\restriction{\overline{\RrR}(A)}\) has simple spectrum. It is also well-known
(and easy to prove) that the kernel of a Hankel operator is either trivial or
infinite-dimensional. So, the following problem naturally arises:

\begin{cnj}{Hankel}
Let \(A\) be a bounded positive operator on a separable Hilbert space such that
\(\NnN(A)\) is infinite-dimensional, \(\RrR(A)\) is non-closed and
\(A\restriction{\overline{\RrR}(A)}\) has simple spectrum. Then \(A\) is
unitarily equivalent to a Hankel operator satisfying double positivity
condition.
\end{cnj}

\section{Operator ranges}

In this part we give further conditions on a dense operator range \(\RrR\)
contained in \(H\) equivalent to the existence of a closed linear subspace \(Z\)
of \(H\) such that \(Z \cap \RrR = \{0\}\) and \(\dim(Z) = \dim(H)\) (cf.\ item
(ii) in \PRO{non-sep}). To this end we need to recall some well-known facts
about operator ranges.\par
A linear subspace \(\RrR\) of a Hilbert space \(H\) is called
an \textit{operator range} if there exist a Hilbert space \(K\) and a bounded
operator \(T\dd K \to H\) such that \(\RrR(T) = \RrR\). The following is a basic
result on operator ranges (see Theorem~1.1 in \cite{f-w}).

\begin{thm}{basic}
For any operator range \(\RrR\) in \(H\) there are mutually orthogonal closed
linear subspaces \(H_1,H_2,\ldots\) such that
\begin{equation}\label{eqn:opr}
\RrR = \OPN{ran}[H_1,H_2,\ldots] \df \Bigl\{\oplus_{n=1}^{\infty} x_n \in
\bigoplus_{n=1}^{\infty} H_n\dd\ \sum_{n=1}^{\infty} (2^n \|x_n\|)^2 <
\infty\Bigr\}.
\end{equation}
Moreover, the operator range \(\RrR\) given by \eqref{eqn:opr} is dense in \(H\)
iff \(H = \bigoplus_{n=1}^{\infty} H_n\).
\end{thm}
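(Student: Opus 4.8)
The plan is to reduce the whole statement to a single positive operator and then slice its spectrum dyadically. First I would replace the given operator range by the range of a positive operator: writing \(\RrR = \RrR(T)\) for some bounded \(T \dd K \to H\), I would set \(A \df (TT^*)^{1/2} \in \Bb(H)\), a positive operator, and check by Douglas' theorem \cite{dou} that \(\RrR(A) = \RrR(T) = \RrR\) (both inclusions are immediate from \(AA^* = A^2 = TT^*\)). After discarding the trivial case \(\RrR = \{0\}\) and rescaling—which does not affect the range—I may assume \(\|A\| \leqsl 1\), so that the spectrum of \(A\) is contained in \([0,1]\).

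Next I would invoke the spectral theorem. Let \(E\) be the spectral measure of \(A\) and put \(H_n \df \RrR(E(J_n))\) for the dyadic intervals \(J_n \df (2^{-n}, 2^{-n+1}]\), \(n = 1, 2, \ldots\). Since the \(J_n\) are pairwise disjoint and exhaust \((0,1]\), the \(H_n\) are mutually orthogonal closed subspaces with \(\bigoplus_{n=1}^{\infty} H_n = \RrR(E((0,1])) = \overline{\RrR}(A)\). As \(A\) commutes with each \(E(J_n)\), it is block diagonal for this decomposition, and on \(H_n\) functional calculus yields \(2^{-n}\|x\| \leqsl \|Ax\| \leqsl 2^{-n+1}\|x\|\) for \(x \in H_n\). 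I would then introduce the auxiliary diagonal operator \(B \df \bigoplus_{n=1}^{\infty} 2^{-n} I_{H_n}\) (extended by \(0\) on \(\NnN(A) = (\bigoplus_n H_n)^{\perp}\)), noting that \(\RrR(B)\) is precisely \(\OPN{ran}[H_1,H_2,\ldots]\): indeed \(y = Bx\) means \(y_n = 2^{-n}x_n\), so the admissibility condition \(x \in \bigoplus_n H_n\) translates into \(\sum_n (2^n\|y_n\|)^2 < \infty\).

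With these definitions the block estimates above globalize to \(\|Bx\| \leqsl \|Ax\| \leqsl 2\|Bx\|\) for every \(x \in H\), i.e. \(B^2 \leqsl A^2 \leqsl 4B^2\). Feeding these two operator inequalities into Douglas' theorem once more gives \(\RrR(A) \subseteq \RrR(B)\) (from \(A^2 \leqsl 4B^2\)) and \(\RrR(B) \subseteq \RrR(A)\) (from \(B^2 \leqsl A^2\)), whence \(\RrR = \RrR(A) = \RrR(B) = \OPN{ran}[H_1,H_2,\ldots]\), which is exactly \eqref{eqn:opr}. For the final clause I would simply observe that \(\OPN{ran}[H_1,H_2,\ldots]\) contains every finitely supported sequence and so is dense in \(\bigoplus_n H_n = \overline{\RrR}(A)\); therefore \(\RrR\) is dense in \(H\) if and only if \(\overline{\RrR}(A) = H\), i.e.\ if and only if \(H = \bigoplus_n H_n\).

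The conceptual content here is light, so the real work is bookkeeping: verifying that the reduction to a positive operator and the rescaling genuinely preserve the range, and that the dyadic slicing reproduces the weight \(2^n\) \emph{exactly} rather than merely up to a harmless multiplicative constant. I expect the only genuine step to be the two-sided comparison \(B^2 \leqsl A^2 \leqsl 4B^2\) together with its clean translation into range equality via the correct direction of Douglas' criterion; everything else is routine spectral-theoretic verification.
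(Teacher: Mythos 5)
Your proof is correct, and the paper itself contains no proof of this statement to compare against---it is quoted verbatim from Theorem~1.1 of \cite{f-w}, and your argument (reduce to the positive operator \(A = (TT^*)^{1/2}\) via Douglas' range criterion, slice the spectrum over the dyadic intervals \((2^{-n},2^{-n+1}]\), and compare \(A\) with the diagonal operator \(B = \bigoplus_n 2^{-n} I_{H_n}\) through \(B^2 \leqsl A^2 \leqsl 4B^2\)) is essentially the classical Fillmore--Williams proof. All the steps check out, including the directions of the two applications of Douglas' theorem and the observation that the density clause for \(\OPN{ran}[H_1,H_2,\ldots]\) holds for an arbitrary orthogonal family \(H_1,H_2,\ldots\), not just the spectral one.
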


It is worth noting that for a given operator range \(\RrR\) a sequence \(H_1,
H_2,\ldots\) such that \(\RrR = \OPN{ran}[H_1,H_2,\ldots]\) (cf.\
\eqref{eqn:opr}) is, in general, not unique. Also, some of the spaces \(H_n\)
can be zero-dimensional. For the purposes of this section, let us introduce
the following

\begin{dfn}{type}
Let \(\alpha_1,\alpha_2,\ldots\) be any sequence of cardinal numbers. We say
an operator range \(\RrR\) in \(H\) is \textit{of type
\(\ueR(\alpha_n)_{n=1}^{\infty}\)} if there are mutually orthogonal closed
linear subspaces \(H_1,H_2,\ldots\) of \(H\) such that \(\RrR =
\OPN{ran}[H_1,H_2,\ldots]\) (cf.\ \eqref{eqn:opr}) and \(\dim(H_n) = \alpha_n\)
for any \(n > 0\).\par
Two operator ranges \(\RrR_1\) in \(H_1\) and \(\RrR_2\) in \(H_2\) are
\textit{equivalent} if there is a unitary operator \(U\dd H_1 \to H_2\) such
that \(U(\RrR_1) = \RrR_2\).
\end{dfn}

A single operator range can be of many different types. However, it is easy to
see that \textbf{dense} operator ranges of a given type are all equivalent. When
two types represent equivalent dense operator ranges is a subject of Theorem~3.3
in \cite{f-w}. Here we skip the details.\par
The main result of this section is

\begin{thm}{pure}
Let \(\RrR\) be a dense operator range of type \(\ueR(\alpha_n)_{n=1}^{\infty}\)
in a Hilbert space \(H\) of dimension \(\alpha \geqsl \aleph_0\). \TFCAE
\begin{enumerate}[\upshape(a)]
\item there exists a closed linear subspace \(Z\) of \(H\) such that \(Z \cap
 \RrR = \{0\}\) and \(\dim(Z) = \alpha\);
\item for any \(\beta < \alpha\) there exists a closed linear subspace \(Y\) of
 \(H\) such that \(Y \cap \RrR = \{0\}\) and \(\dim(Y) > \beta\);
\item for any \(\beta < \alpha\) and \(n > 0\) there is \(m > n\) such that
 \(\sum_{j=n+1}^m \alpha_j > \beta\);
\item \(\dim(K^{\perp}) = \alpha\) for any linear subspace \(K\) of \(\RrR\)
 that is closed in \(H\);
\item there exists a dense operator range \(\RrR'\) in \(H\) such that \(\RrR'
 \cap \RrR = \{0\}\);
\item there exists \(U \in \Uu(H)\) for which \(U(\RrR) \cap \RrR = \{0\}\);
\item there exists a closed linear subspace \(W\) of \(H\) such that \(W \cap
 \RrR = W^{\perp} \cap \RrR = \{0\}\).
\end{enumerate}
Moreover, \(\dim(W) = \dim(W^{\perp}) = \alpha\) for any \(W\) witnessing
\textup{(g)}.
\end{thm}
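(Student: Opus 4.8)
The plan is to prove the cycle of equivalences (a)$\implies$(b)$\implies$(c)$\implies$(d)$\implies$(a) through the purely cardinal-arithmetic condition (c), and then to handle the three geometrically more delicate conditions (e),(f),(g) separately, finally showing each forces $\dim(W)=\dim(W^\perp)=\alpha$. The structural representation $\RrR=\OPN{ran}[H_1,H_2,\ldots]$ from \THM{basic} (with $H=\bigoplus_n H_n$ since $\RrR$ is dense) is the central tool: every element of $\RrR$ is a sequence $\oplus_n x_n$ whose tails decay like $2^{-n}$, so a vector lies in $\RrR$ iff it is ``smooth'' in the sense of rapid decay along the grading. The implication (a)$\implies$(b) is trivial (take $Y=Z$, noting $\dim(Z)=\alpha>\beta$). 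For (b)$\implies$(c) I would argue contrapositively: if (c) fails, there are $\beta<\alpha$ and $n$ with $\sum_{j>n}\alpha_j\leqsl\beta$ (the tail sums stabilize), and then any closed $Y$ meeting $\RrR$ trivially must inject into the finite-rank-controlled complement, forcing $\dim(Y)\leqsl\beta$; the key estimate is that a closed subspace avoiding a dense operator range cannot have dimension exceeding the ``available room'' measured by the tail.

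For (c)$\implies$(d) I would take a closed $K\subset\RrR$ and show $\dim(K^\perp)=\alpha$. Since $K$ is closed and contained in the operator range $\RrR$, a Baire-category/closed-graph argument shows $A\restriction K$ is bounded below where $\RrR=\RrR(A)$, so $K$ is contained in a ``thick'' piece of $H$ only finitely far along the grading; condition (c) then guarantees enough orthogonal directions remain, giving $\dim(K^\perp)=\alpha$. The construction (d)$\implies$(a) is the heart of the matter: I would build the witnessing subspace $Z$ by a transfinite/greedy selection of mutually orthogonal unit vectors $z_\xi$, each chosen so that $z_\xi\notin\RrR$ and $\overline{\lin}\{z_\eta:\eta\leqsl\xi\}\cap\RrR=\{0\}$, using the blocks $H_j$ to place each new vector as a suitably normalized infinite combination $\sum_j c_j e_j$ with $c_j$ decaying slower than $2^{-j}$ (hence outside $\RrR$) while remaining $\ell_2$; condition (c) supplies, at each stage, infinitely many fresh blocks with enough dimension to keep the vectors orthogonal and the intersection trivial. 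The main obstacle is ensuring that the \emph{whole} closed span $Z$, not merely each individual $z_\xi$, meets $\RrR$ only in $0$: a nontrivial limit combination could accidentally become smooth, so the selection must control Fourier-type coefficients uniformly, which is where the tail-growth in (c) is essential.

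For the second group, (a)$\implies$(e) is immediate by taking $\RrR'$ to be any dense operator range inside $Z$ (e.g.\ a generic operator range of type $\ueR(\alpha,0,0,\ldots)$ supported on $Z$, which meets $\RrR$ trivially since $Z\cap\RrR=\{0\}$). The implication (e)$\implies$(f) follows by writing $\RrR'=\RrR(T)$ and using that any two dense operator ranges in $H$ are unitarily movable onto comparable positions; more directly, (f) is a special case of (e) after replacing $\RrR'$ by $U(\RrR)$, so I would instead prove (e)$\implies$(g) and (f)$\implies$(g) in parallel: given two dense operator ranges meeting trivially, intersect with a common orthogonal decomposition to extract a closed $W$ with $W\cap\RrR=W^\perp\cap\RrR=\{0\}$, exploiting that $W^\perp$ can be arranged to carry the second range. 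Finally (g)$\implies$(a) is clear since $W$ itself witnesses (a), and the ``moreover'' clause follows because if $\dim(W)<\alpha$ then $W^\perp$ would have dimension $\alpha$ and, by density of $\RrR$ together with the block structure, $W^\perp$ could not avoid $\RrR$ unless $W^\perp$ itself satisfies (a)---symmetry between $W$ and $W^\perp$ then forces both dimensions to equal $\alpha$. The hardest single step remains (d)$\implies$(a): controlling the closed linear span globally rather than pointwise.
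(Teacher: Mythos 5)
The decisive failure is in the group (e)--(g), which is exactly where the paper's real work lies. Your step (a)\(\implies\)(e) is wrong: condition (e) requires \(\RrR'\) to be dense \emph{in \(H\)}, but any operator range ``supported on \(Z\)'' lies inside \(Z\), and \(Z\) is a proper closed subspace (since \(Z \cap \RrR = \{0\}\) and \(\RrR\) is dense and non-zero), so nothing inside \(Z\) can be dense in \(H\); moreover a range of type \(\ueR(\alpha,0,0,\ldots)\) is just the closed subspace \(Z\) itself. Your fallback (e)\(\implies\)(f) rests on the claim that any two dense operator ranges are ``unitarily movable'' onto one another, which is false: unitary equivalence of dense operator ranges is governed by their types (\textup{Theorem~3.3} in \cite{f-w}, cited in the paper after \DEF{type}), and in any case producing a unitary \(U\) with \(U(\RrR) \cap \RrR = \{0\}\) \emph{is} condition (f), not something you may assume. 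The only easy arrow here is the reverse one, (f)\(\implies\)(e) (take \(\RrR' = U(\RrR)\)), which you note; your proposed extraction of \(W\) for (g) from (e) by ``intersecting with a common orthogonal decomposition'' is likewise unsubstantiated. The net effect is that your proposal contains no proof that (a)--(d) imply any of (e), (f), (g). In the paper this is the hard implication (c)\(\implies\)(f),(g), proved by generalizing Dixmier's method: a Baire-category result (\PRO{comp}) yields a dense \(\ggG_{\delta}\) of unitaries \(U\) with \(U(\RrR(C)) \cap \RrR(C) = \{0\}\) for compact \(C\) into a separable space, \COR{perp} supplies the separable seed for (g), and a six-step bootstrap (passing to subspaces, to \(J\)-fold direct sums \(\RrR^J\), extracting dense ranges of type \(\ueR_{\gamma}\), and finally embedding any \(\RrR\) satisfying (c) into a dense range of type \(\ueR_{\alpha}\)) finishes the argument. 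Nothing in your proposal substitutes for this machinery.

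The second genuine gap is (d)\(\implies\)(a), which you yourself identify as the crux and then leave open: ``the selection must control Fourier-type coefficients uniformly'' is a wish, not an argument---a greedy transfinite choice of the \(z_{\xi}\) gives no control over limits of combinations drawn from \(\alpha\)-many generators, and that is precisely where such constructions break. (There is also a local circularity: within your cycle you may assume only (d) when proving (d)\(\implies\)(a), yet the construction invokes (c); this is repairable, since applying (d) to the closed subspaces \(\bigoplus_{j=1}^n H_j \subset \RrR\) instantly yields (c), but as written the logic is off.) The paper avoids any such construction by returning to operator theory: from (c) one defines \(A(\oplus_n x_n) = \oplus_n 2^{-n}x_n\), so that \(\RrR(A) = \RrR\), checks \(\dim \RrR(E((0,\epsi))) = \alpha\) for every \(\epsi > 0\), and invokes \PRO{non-sep} (itself built on \PRO{aleph} and the isometry machinery of Section~2) to produce \(Z \in \zzZ(A)\) with \(\dim(Z) = \alpha\). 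For the record, your (a)\(\implies\)(b) and contrapositive (b)\(\implies\)(c) are sound; your (c)\(\implies\)(d) sketch is correct once ``\(K\) contained in a thick piece'' is restated properly (writing \(K = A(M)\) with \(A\) bounded below on \(M\) by the open mapping theorem, \(E((0,\epsi))\) is a strict contraction on \(K\) for small \(\epsi\), so \(P_{K^{\perp}}\) is injective on \(\RrR(E((0,\epsi)))\) and \(\dim(K^{\perp}) \geqsl \dim \RrR(E((0,\epsi))) = \alpha\)); and your (g)\(\implies\)(a) with the ``moreover'' clause is the paper's dimension count in rough form. But with (d)\(\implies\)(a) unexecuted and (e)--(g) unreached, the proposal does not establish the theorem.
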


Before giving a proof, first we comment on the assertion of (f) in the above
result: in the case when \(\RrR\) is a non-closed operator range in
a \textbf{separable} Hilbert space \(H\) it was first proved by Dixmier
\cite{dix} (consult also Theorem~3.6 in \cite{f-w}). He started his proof with
a specific example of an operator range for which (f) holds and then described
an elegant technique to get the assertion in full generality (in the separable
case). Here we will generalize his method, but instead of using his ``starting''
example we propose a new approach to this issue---our starting tool will be
the following proposition, which may be already known, but we could not find it
in the literature. This result can be considered interesting in its own right.

\begin{pro}{comp}
Let \(C\dd K \to H\) be a compact operator from a Hilbert space \(K\) into
a \textbf{separable} infinite-dimensional Hilbert space \(H\). Then the set
\(\Delta\) of all \(U \in \Uu(H)\) such that \(U(\RrR(C)) \cap \RrR(C) = \{0\}\)
is dense \(\ggG_{\delta}\) in the Polish group \(\Uu(H)\) equipped with
the strong operator topology.
\end{pro}
\begin{proof}
First of all, recall that for separable \(H\) the space \(\Uu(H)\) is separable
and completely metrizable in the strong operator topology. Moreover, for any
Hilbert space \(K\), \(\Uu(K)\) is a topological group with respect to this
topology. So, in the separable case we can apply Baire's theorem, which will
lead us to the final conclusion after showing the following property, valid in
all (that is, possibly non-separable) infinite-dimensional Hilbert spaces \(H\):
\begin{itemize}
\item[\((\star)\)]\itshape
 For any norm compact set \(M \subset H\) disjoint from the origin, the set
 \(\Omega\) of all \(U \in \Uu(H)\) such that \(U(M) \cap M = \varempty\) is
 open and dense in the topological group \(\Uu(H)\) equipped with the strong
 operator topology.
\end{itemize}
First we prove \((\star)\), then we will show how it leads to the whole
conclusion of the proposition.\par
We may and do assume that \(M \neq \varempty\). It easily follows from
the compactness of \(M\) that \(\Omega\) is open in \(\Uu(H)\). Indeed, fix \(U
\in \Omega\) and take \(\delta > 0\) such that \(\|Ux - y\| \geqsl 4\delta\) for
all \(x, y \in M\). Further, let \(F \subset M\) be a finite non-empty
\(\delta\)-net in \(M\). It is then easy to check that \(\|Vx - y\| \geqsl
\delta\) for all \(x, y \in M\) and any \(V \in \Uu(H)\) such that \(\|Vf - Uf\|
< \delta\) for any \(f \in F\). Consequently, each such \(V\) belongs to
\(\Omega\) and hence \(\Omega\) is open in the strong operator topology. To show
that \(\Omega\) is dense in \(\Uu(H)\), we need to know that \(H\) is
infinite-dimensional. Fix \(U \in \Uu(H)\), a finite non-empty set \(F\) in
\(H\) and \(\epsi > 0\). Our aim is to show that there is \(V \in \Omega\) such
that \(\|Vf - Uf\| \leqsl \epsi\) for any \(f \in F\). We may and do assume that
\(0 \notin F\). For simplicity, put \(L \df M \cup F\) and take \(m > 0\) such
that
\begin{equation}\label{eqn:aux14}
\|y\| \geqsl m \qquad (y \in L).
\end{equation}
Next, choose any \(\delta \in (0,1)\) satisfying
\begin{equation}\label{eqn:aux15}
2 \|f\| \delta < \epsi \qquad (f \in F)
\end{equation}
and let \(S \subset L\) be a finite \((m\delta/4)\)-net in \(L\) containing
\(F\). Set \(E \df \lin(S)\) and take any \(W \in \Uu(H)\) such that
\begin{equation}\label{eqn:aux16}
W(E) \perp U(E)+E.
\end{equation}
We conclude from the above property (and the fact that \(E\) is
finite-dimensional) that there exists \(V \in \Uu(H)\) that satisfies
\begin{equation}\label{eqn:aux17}
Vx = \delta Wx + \sqrt{1-\delta^2} Ux \qquad (x \in E).
\end{equation}
Since \(F \subset E\), it follows from \eqref{eqn:aux16} and \eqref{eqn:aux17}
that for any \(f \in F\),
\[\|Vf - Uf\|^2 = 2(1 - \sqrt{1-\delta^2}) \|f\|^2 \leqsl 2 \delta^2 \|f\|^2 <
\epsi^2\]
(where the last inequality is a consequence of \eqref{eqn:aux15}). Thus, to end
the proof of \((\star)\), it remains to check that \(V(M) \cap M = \varempty\).
To this end, first take \(a, b \in S \subset E \cap L\). Again, we infer from
\eqref{eqn:aux16} and \eqref{eqn:aux17} that \(\|Va - b\|^2 = \|a\|^2 + \|b\|^2
- 2 \sqrt{1-\delta^2} \RE\scalar{Ua}{b}_H\), so
\[\|Va - b\|^2 \geqsl \|a\|^2 + \|b\|^2 - 2 \sqrt{1-\delta^2} \|a\| \cdot \|b\|
\geqsl \delta^2 \|b\|^2 \geqsl \delta^2 m^2,\]
by \eqref{eqn:aux14}. Now if \(x, y \in M\) are arbitrary, choose \(a, b \in S\)
such that \(\|x - a\| \leqsl m\delta/4\) and \(\|y - b\| \leqsl m\delta/4\). We
then have \(\|Vx - y\| \geqsl \|Va - b\| - \|Va - Vx\| - \|y - b\| \geqsl
m\delta/2\) and we are done.\par
Having \((\star)\), the assertion of the proposition can briefly be proven.
Since \(C\) is compact and the closed unit ball \(\bar{B}_K\) in \(K\) is weakly
compact, the set \(D \df C(\bar{B}_K)\) is norm compact. Observe that \(\RrR(C)
\setminus \{0\} = \bigcup_{n=1}^{\infty} M_n\) where \(M_n = nC \setminus
(\frac1n B_H)\) where \(B_H\) is the open unit ball in \(H\). We infer from
\((\star)\) that the set \(\Omega_n \df \{U \in \Uu(H)\dd\ U(M_n) \cap M_n =
\varempty\}\) is open and dense in \(\Uu(H)\). Finally, Baire's theorem yields
that the intersection of all \(\Omega_n\), which coincides with \(\Delta\), is
dense in \(\Uu(H)\).
\end{proof}

In the proof of \THM{pure} we shall also apply the following result.

\begin{cor}{perp}
There exists a dense operator range \(\RrR_0\) in a separable Hilbert space
\(K\) and a closed linear subspace \(W\) of \(K\) such that \(W \cap \RrR_0 =
W^{\perp} \cap \RrR_0 = \{0\}\).
\end{cor}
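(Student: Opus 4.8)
The plan is to construct the required operator range $\RrR_0$ together with the subspace $W$ by combining two copies of a compact operator and applying \PRO{comp} to produce a unitary that separates the two ranges. The key observation is that $(g)$ of \THM{pure} asks for a single range $\RrR$ with $W \cap \RrR = W^\perp \cap \RrR = \{0\}$, whereas \PRO{comp} produces a unitary $U$ with $U(\RrR(C)) \cap \RrR(C) = \{0\}$; I will bridge this gap by building $\RrR_0$ inside $K = H \oplus H$ and taking $W = H \oplus \{0\}$, so that $W$ and $W^\perp$ play the roles of the two copies of $H$.

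More concretely, first I would fix a separable infinite-dimensional Hilbert space $H$ and a compact injective operator $C \dd H \to H$ with dense range (for instance a diagonal operator with eigenvalues tending to zero), so that $\RrR(C)$ is a dense operator range in $H$. By \PRO{comp} the set of $U \in \Uu(H)$ with $U(\RrR(C)) \cap \RrR(C) = \{0\}$ is dense $\ggG_\delta$, hence nonempty; fix one such $U$ and put $\RrR_1 = \RrR(C)$, $\RrR_2 = U(\RrR(C))$, both dense operator ranges in $H$ with $\RrR_1 \cap \RrR_2 = \{0\}$. Now set $K = H \oplus H$, let $W = H \oplus \{0\}$ (so $W^\perp = \{0\} \oplus H$), and define
\[
\RrR_0 = \{x \oplus y \in K \dd\ x \in \RrR_1,\ y \in \RrR_2,\ x-y \in \RrR_1 \cap \RrR_2\text{-type constraint}\}.
\]
The cleaner choice is to let $\RrR_0$ be the graph-like range $\{\,(Cx) \oplus (UCx) \dd x \in H\,\}$ suitably enlarged to a dense operator range; then $\RrR_0 \cap W$ consists of those elements with second coordinate zero, forcing $UCx = 0$ hence $x = 0$ (as $U$ and $C$ are injective), and symmetrically $\RrR_0 \cap W^\perp = \{0\}$. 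Density of $\RrR_0$ in $K$ follows because both coordinate projections of $\RrR_0$ have dense range.

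The main obstacle will be ensuring simultaneously that $\RrR_0$ is a genuine \emph{dense} operator range in $K$ and that it meets both $W$ and $W^\perp$ only in $\{0\}$. The graph construction $\{(Cx)\oplus(UCx)\}$ gives the two intersection conditions immediately but is not dense in $K$ (its closure is a graph-type subspace), so I would instead define $\RrR_0 = \RrR(T)$ for the operator $T \dd H \oplus H \to H \oplus H$ given by $T(u \oplus v) = (Cu) \oplus (Cv)$ composed with the rotation taking the diagonal off the axes—concretely, an operator whose range is $\{(Cu) \oplus (U C v) \dd u,v \in H\}$, which is a dense operator range since each factor is dense. For this range, $x \oplus 0 \in \RrR_0$ forces $U C v = 0$, i.e. $v = 0$, but leaves $x = Cu$ free, so this naive version fails $(g)$; the fix is to arrange the two intersection conditions through the separation property of $U$ rather than through injectivity of a single factor. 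Thus the delicate point, which I expect to absorb most of the work, is selecting the right dense operator range $\RrR_0$ whose intersections with both $W$ and $W^\perp$ vanish: this is exactly a disjointness-of-three-ranges phenomenon, and I would handle it by applying \PRO{comp} once more (or its proof's property $(\star)$) to rotate $\RrR(C)$ off \emph{both} axes simultaneously inside $K$, using that the union of two compact sets $M_n \cup M_n'$ is again compact and disjoint from the origin.
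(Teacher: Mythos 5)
Your frame is the same as the paper's---\(K = H \oplus H\), \(W = H \oplus \{0\}\), one application of \PRO{comp} to get \(U \in \Uu(H)\) with \(U(\RrR(C)) \cap \RrR(C) = \{0\}\)---but the construction of \(\RrR_0\) itself is never actually completed, and each of the three candidates you float is defective. The first claim, that density of \(\RrR_0\) ``follows because both coordinate projections have dense range,'' is false: the closure of \(\{Cx \oplus UCx\dd\ x \in H\}\) is exactly the graph of \(U\), since \(a \oplus b \perp \RrR_0\) reduces (with \(C\) self-adjoint) to \(C(a + U^*b) = 0\), i.e.\ \(a = -U^*b\); this is a proper closed subspace both of whose coordinate projections are onto \(H\). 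You then notice this and pass to \(\{Cu \oplus UCv\dd\ u,v \in H\}\), which, as you also notice, fails the intersection condition. The genuine gap is in your proposed fix: applying \PRO{comp} or its property \((\star)\) ``once more'' to rotate \(\RrR(C)\) off both axes cannot work as stated, because \((\star)\) separates a norm-compact set \(M\) from its \emph{own} unitary image \(U(M)\), whereas here the obstructions \(W\) and \(W^{\perp}\) are fixed infinite-dimensional closed subspaces whose intersections with spheres are not norm compact; taking unions \(M_n \cup M_n'\) of compact sets does not help, since the sets to be avoided are not of that form. A two-sided variant can be proved---perturb in directions orthogonal to \(E + U(E) + P_W(U(E)) + P_{W^{\perp}}(U(E))\) and lying ``diagonally'' between \(W\) and \(W^{\perp}\), so that both \(\mathrm{dist}(Va,W)\) and \(\mathrm{dist}(Va,W^{\perp})\) stay \(\geqsl \delta m/\sqrt{2}\)---but that is a new lemma you would have to formulate and prove, not an invocation of \PRO{comp}.

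The paper resolves the tension between density and the intersection conditions with one move you are missing: conjugation. Take \(A\) compact, self-adjoint, with trivial kernel, take \(U\) from \PRO{comp}, and set \(Bx \df Ax \oplus UAU^{-1}x\), \(\RrR_0 \df \RrR(B)\). The intersection conditions hold exactly as in your graph attempt, since \(A\) and \(UAU^{-1}\) are injective; but density now comes \emph{from} the separation property instead of clashing with it: if \(a \oplus b \perp \RrR_0\), then, both coordinate operators being self-adjoint, \(Aa + UAU^{-1}b = 0\) with \(Aa \in \RrR(A)\) and \(UAU^{-1}b \in U(\RrR(A))\), so \(Aa = UAU^{-1}b = 0\) and hence \(a = b = 0\). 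Replacing your \(UC\) by \(UCU^{-1}\) is precisely the fix: with \(UC\) the adjoint of the second coordinate is \(CU^*\), whose range is \(\RrR(C)\), so the hypothesis \(U(\RrR(C)) \cap \RrR(C) = \{0\}\) never enters the density computation---which is why your graph collapsed onto the graph of \(U\).
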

\begin{proof}
Let \(K \df H \oplus H\) (where \(H\) is infinite-dimensional and separable) and
\(W \df H \oplus \{0\} \subset K\). Take any compact self-adjoint operator
\(A\dd H \to H\) with trivial kernel and choose---applying \PRO{comp}---any \(U
\in \Uu(H)\) such that
\begin{equation}\label{eqn:aux18}
U(\RrR(A)) \cap \RrR(A) = \{0\}.
\end{equation}
Let \(B\dd H \to K\) be given by \(Bx \df Ax \oplus UAU^{-1}x\) and set \(\RrR_0
\df \RrR(B)\). Since both \(A\) and \(U\) are one-to-one, we have \(W \cap
\RrR_0 = W^{\perp} \cap \RrR_0 = \{0\}\). So, we only need to show that
\(\RrR_0\) is dense in \(K\), which simply follows from \eqref{eqn:aux18}: if
\(a \oplus b\) is orthogonal to \(\RrR_0\), then \(Aa + UAU^{-1}b = 0\) and,
consequently, \(a = b = 0\).
\end{proof}

\begin{proof}[Proof of \THM{pure}]
For the aim of this proof, take a sequence of mutually orthogonal closed linear
subspaces \(H_1,H_2,\ldots\) of \(H\) such that
\begin{equation}\label{eqn:aux19}
\RrR = \OPN{ran}[H_1,H_2,\ldots]
\end{equation}
and \(\dim(H_n) = \alpha_n\) for any \(n > 0\). Since \(\RrR\) is dense, we have
(by \THM{basic}):
\begin{equation}\label{eqn:aux12}
H = \bigoplus_{n=1}^{\infty} H_n.
\end{equation}
Before passing to the main proof, consider an additional condition:
\begin{itemize}
\item[(h)] for any \(\beta < \alpha\) and \(n > 0\) there exists a closed linear
 subspace \(K\) of \(H\) such that
 \begin{equation}\label{eqn:aux13}
 K \cap \Bigl(\bigoplus_{j=1}^n H_j\Bigr) = \{0\} \qquad \textup{and} \qquad
 \dim(K) > \beta.
 \end{equation}
\end{itemize}
(Note that (h) is a weakening of each of (a), (b), (c), (d) and (g).) We will
show that (h) is equivalent to each of (a)--(g). For the reader's convenience,
let us draw the scheme of the proof:
(a)\(\implies\)(b)\(\implies\)(d)\(\implies\)(h)\(\implies\)(c)\(\implies\)(a);
then (f)\(\implies\)(e)\(\implies\)(h) and (g)\(\implies\)(a) (together with
the additional claim of the theorem), and finally (c)\(\implies\)(f),(g).\par
Of couse, (a) implies (b). If \(\beta < \alpha\) and \(Y\) witnesses (b) (for
\(\beta\)), and \(K\) is as specified in (d), then \(P_{K^{\perp}}\) is
one-to-one on \(Y\). Consequently, \(\dim(K^{\perp}) = \dim(\RrR(P_{K^{\perp}}))
\geqsl \dim(Y) > \beta\) which yields (d). It is obvious that (d) implies (h).
Now assume (h) holds and fix \(\beta < \alpha\) and \(n \geqsl 0\). For
simplicity, denote by \(P\) the orthogonal projection onto the orthogonal
complement (in \(H\)) of \(\bigoplus_{j=1}^n H_j\). By (h), there is a closed
linear subspace \(K\) of \(H\) that satisfies \eqref{eqn:aux13}. As argued
previously, we conclude that \(\dim(\RrR(P)) \geqsl \dim(K) > \beta\). But
\(\RrR(P) = \bigoplus_{j=n+1}^{\infty} H_j\) (thanks to \eqref{eqn:aux12}) and
hence \(\sum_{j>n} \alpha_j > \beta\). So, one can find \(m > n\) such that
\(\sum_{j=n+1}^m \alpha_j > \beta\) which gives (c). Finally, assume (c) holds.
Consider a bounded operator \(A\dd H \to H\) defined as follows:
\[A(\oplus_{n=1}^{\infty} x_n) \df \oplus_{n=1}^{\infty} 2^{-n} x_n, \qquad
\oplus_{n=1}^{\infty} x_n \in \bigoplus_{n=1}^{\infty} H_n.\] 
It readily follows from \eqref{eqn:aux19} that \(\RrR(A) = \RrR\). Moreover, it
is also easy to show that for arbitrarily fixed \(\epsi > 0\),
\(\bigoplus_{n=j}^{\infty} H_j \subset \RrR(E((0,\epsi)))\) for sufficiently
large \(n > 0\) where \(E\) is the spectral measure of \(A\). Consequently, we
infer from (c) that \(\dim(\RrR(E((0,\epsi)))) = \alpha\) and it suffices to
apply \PRO{non-sep} to get (a).\par
Further, (e) is easily implied by (f) as \(U(\RrR)\) (for any \(U \in \Uu(H)\))
is a dense operator range in \(H\). And if (e) holds, \(\RrR' =
\OPN{ran}[H_1',H_2',\ldots]\) for suitable sequence \(H_1',H_2',\ldots\) of
mutually orthogonal closed linear subspaces of \(H\) such that \(H =
\bigoplus_{n=1}^{\infty} H_n'\). Then \(\sum_{n=1}^{\infty} \dim(H_n') =
\alpha\) and \((\bigoplus_{j=1}^m H_j') \cap (\bigoplus_{k=1}^n H_k) = \{0\}\)
for all \(n, m > 0\). These properties easily yield (h).\par
To show the additional claim of the theorem and that (a) follows from (g), it is
sufficient to prove that \(\dim(W) = \alpha\) whenever \(W\) is as specified in
(g). To this end, assume---on the contrary---that \(\dim(W) < \alpha\). Then, by
\eqref{eqn:aux12}, we can find \(n > 0\) such that \(\sum_{j=1}^n \dim(H_j) >
\dim(W)\). This inequality implies that \((\bigoplus_{j=1}^n H_j) \cap W^{\perp}
\neq \{0\}\) which contradicts (g) as \(\bigoplus_{j=1}^n H_j \subset
\RrR\).\par
We turn to the hardest part of the proof---namely, that both (f) and (g) follow
from (c). We adapt Dixmier's proof \cite{dix} (see also Theorem~3.6 in
\cite{f-w}) of the result mentioned in the paragraph following the statement of
\THM{pure} above, but instead of his specific example of a dense operator range
in a separable Hilbert space that satisfies (f) we apply our \PRO{comp} and
\COR{perp}.\par
To simplify further arguments, let us call a linear subspace \(\RrR \subset H\)
of an arbitrary Hilbert space (f,g)-\textit{valid} if both (f) and (g) hold for
\(\RrR\). Here we do not assume that \(\RrR\) is an operator range. In a similar
manner we define (f)-\textit{valid} and (g)-\textit{valid} linear subspaces of
Hilbert spaces. Moreover, for any non-empty set \(J\) and \(\RrR \subset H\) let
\(\RrR^J \subset \bigoplus_{j \in J} H_j\) (with \(H_j = H\) for all \(j \in
J\)) stand for the set of all \(\oplus_{j \in J} x_j\) with \(x_j \in \RrR\) for
any \(j \in J\). Finally, for any infinite cardinal \(\gamma\) we call
an operator range \(\RrR\) of type \(\ueR_{\gamma}\) if it is of type
\(\ueR(\gamma_n)_{n=1}^{\infty}\) where \(\gamma_n = \gamma\) for each \(n\). We
divide the remaining part of the proof of the theorem into the following steps:
\begin{enumerate}[(I)]
\item There are dense operator ranges \(\RrR_f\) and \(\RrR_g\) in separable
 Hilbert spaces such that \(\RrR_f\) is (f)-valid and \(\RrR_g\) is (g)-valid.
\item If \(\RrR\) is (f,g)-valid, each linear subspace of \(\RrR\) is
 (f,g)-valid as well.
\item If \(\RrR\) is (f)-valid (resp.\ (g)-valid), so is \(\RrR^J\) for any set
 \(J \neq \varempty\).
\item If \(\RrR\) is a dense operator range in a separable Hilbert space \(H\)
 and \(J\) is an infinite set, then \(\RrR^J\) contains an operator range of
 type \(\ueR_{\card(J)}\) that is dense in \(H^J\).
\item For any infinite cardinal \(\gamma\), all dense operator ranges of type
 \(\ueR_{\gamma}\) are (f,g)-valid.
\item If \(\RrR\) and \(H\) are as specified in the statement of the theorem and
 (c) is fulfilled, then \(\RrR\) is contained in a dense (in \(H\)) operator
 range of type \(\ueR_{\alpha}\). In particular, \(\RrR\) is (f,g)-valid.
\end{enumerate}
Note that property (VI) is exactly what we want. Below we give brief proofs of
the above items (I)--(VI).\par
Property (I) is covered by \PRO{comp} and \COR{perp}; (II) is obvious; whereas
(III) follows from a simple argument on direct sums: if \(U\) is a unitary
operator on \(H\) such that \(U(\RrR) \cap \RrR = \{0\}\), then \(U^J \df
\bigoplus_{j \in J} U_j\) (with \(U_j = U\) for all \(j \in J\)) is a unitary
operator on \(H^J\) such that \(U^J(\RrR^J) \cap \RrR^J = \varempty\)
(similarly: if \(W \subset H\) witnesses (g) for \(\RrR\), then \(W^J\)
witnesses (g) for \(\RrR^J\)).\par
We turn to (IV). Assume \(\RrR\) is as specified therein. We may and do assume
that \(\RrR \neq H\). Take a sequence of mutually orthogonal closed linear
subspaces \(H_1,H_2,\ldots\) of \(H\) such that \(\RrR =
\OPN{ran}[H_1,H_2,\ldots]\) and \eqref{eqn:aux12} holds. Since
\(\OPN{ran}[H_1,H_2,\ldots] = \OPN{ran}[\{0\},H_1,H_2,\ldots]\), we may and do
assume that \(H_1 \neq \{0\}\). Finally, there is a sequence of natural numbers
\(1 = \nu_1 < \nu_2 < \ldots\) such that
\begin{equation}\label{eqn:aux20}
H_{\nu_n} \neq \{0\} \qquad (n > 0)
\end{equation}
(because \(\RrR \neq H\)). We consider each of \(H_j = H\) in \(H^J =
\bigoplus_{j \in J} H_j\) as \(H_j = \bigoplus_{n=1}^{\infty} H_{j,n}\) where
\(H_{j,n} = H_n\) for any \(j \in J\) and \(n > 0\). Let \[W\dd H^J =
\bigoplus_{j \in J} \Bigl(\bigoplus_{n=1}^{\infty} H_{j,n}\Bigr) \to
\bigoplus_{n=1}^{\infty} \Bigl(\bigoplus_{j \in J} H_{j,n}\Bigr)\] be
the natural unitary operator that shuffles coordinates:
\(W ((x_{j,n})_{n=1}^{\infty})_{j \in J} =
((x_{j,n})_{j \in J})_{n=1}^{\infty}\). For simplicity, set \(K_n \df
\bigoplus_{j \in J} H_{j,n}\ (n > 0)\) and \(K \df \bigoplus_{n=1}^{\infty}
K_n\). Observe that \(W(\RrR^J)\) contains the space \(\TtT \df
\OPN{ran}[K_1,K_2,\ldots]\). So, to complete the proof of (IV), it is enough to
show that \(\TtT\) contains a dense (in \(K\)) operator range of type
\(\ueR_{\gamma}\) where \(\gamma = \card(J) \geqsl \aleph_0\). To this end, note
that \(\dim(K_{\nu_n}) = \gamma\) for all \(n > 0\) (thanks to
\eqref{eqn:aux20}). Hence, each of the spaces \(K_{\nu_n}\) can be decomposed as
\(K_{\nu_n} = \bigoplus_{k=\nu_n}^{\nu_{n+1}-1} Y_k\) where \(\dim(Y_k) =
\gamma\) for any \(k > 0\) (recall that \(\nu_1 = 1\)). Now relations \(K =
\bigoplus_{k=1}^{\infty} Y_k\) and \(\OPN{ran}[Y_1,Y_2,\ldots] \subset \TtT\)
finish the proof of (IV).\par
As (V) is an immediate consequence of (I)--(IV) and of the fact that all dense
operator ranges of the same type are equivalent, it remains to show (VI). To
this end, we start from \eqref{eqn:aux19} (with \(\dim(H_n) = \alpha_n\)) and
\eqref{eqn:aux12}. It follows from condition (b) that there is an infinite
set \(\Lambda \subset \{1,2,\ldots\}\) such that for any its infinite subset
\(\Lambda_0\) one has \(\sum_{n\in\Lambda_0} \alpha_n = \alpha\). We divide
\(\Lambda\) into pairwise disjoint infinite sets \(\Lambda_1,\Lambda_2,\ldots\)
such that
\begin{equation}\label{eqn:aux21}
\min(\Lambda_n) \geqsl n \qquad (n > 0).
\end{equation}
(We assume that \(\Lambda = \bigcup_{n=1}^{\infty} \Lambda_n\).) Finally, for
each positive \(n\) define \(H_n'\) as follows:
\begin{itemize}
\item \(H_n' = \bigoplus_{k\in\Lambda_n} H_k\) if \(n \in \Lambda\);
\item \(H_n' = (\bigoplus_{k\in\Lambda_n} H_k) \oplus H_n\) otherwise.
\end{itemize}
Note that \(H = \bigoplus_{n=1}^{\infty} H_n'\) (by \eqref{eqn:aux12}) and
\(\dim(H_n') = \alpha\) for all \(n\). What is more, \eqref{eqn:aux21} implies
that \(\OPN{ran}[H_1,H_2,\ldots] \subset \OPN{ran}[H_1',H_2',\ldots]\) which
shows that \(\RrR\) is contained in an operator range of type \(\ueR_{\alpha}\).
The final claim of (VI) is now a consequence of (V) and (II).
\end{proof}

In a similar manner one shows the following result whose proof is left to
the reader.

\begin{pro}{card}
Let \(\RrR\) be a dense operator range of type \(\ueR(\alpha_n)_{n=1}^{\infty}\)
in a Hilbert space \(H\) and let \(\beta\) be an arbitrary cardinal number.
\TFCAE
\begin{enumerate}[\upshape(a)]
\item there exists a closed linear subspace \(Z\) of \(H\) such that \(Z \cap
 \RrR = \{0\}\) and \(\dim(Z) = \beta\);
\item for any cardinal \(\gamma < \beta\) there is a closed linear subspace
 \(W\) of \(H\) such that \(W \cap \RrR = \{0\}\) and \(\dim(W) > \gamma\);
\item \(\sum_{k=n}^{\infty} \alpha_k \geqsl \beta\) for any \(n > 0\).
\item \(\dim(K^{\perp}) \geqsl \beta\) for any linear subspace \(K\) of \(\RrR\)
 that is closed in \(H\).
\end{enumerate}
\end{pro}

Recall that the structure (that is, unitary equivalence type) of the isometry
\(W_Z\) for any \(Z \in \zzZ(A)\) where \(A \in \Bb_+(H)\) is completely
determined by the cardinals \(\alpha(Z) = \dim(Z)\) and \(\beta_A(Z) =
\dim([Z]_A^{\perp})\) (see \THM{Wold}). So, the set \[\Xi(A) \df \{(\dim(Z),
\dim([Z]_A^{\perp}))\dd\ Z \in \zzZ(A)\}\] contains full information on possible
unitary equivalence types of members of \(\vvV(A)\). Our nearest aim is to
determine \(\Xi(A)\). To this end, let us introduce three additional
characteristic quantities:
\begin{itemize}
\item \(\OPN{cdr}(A) \df \min\bigl\{\dim(K^{\perp})\dd\ K \subset \RrR(A)
 \textup{ is a closed linear subspace of } H\bigr\}\);
\item \(\OPN{eig}(A) \df \dim(\OPN{Eig}(A))+1\) if \(\OPN{Eig}(A)\) is
 finite-dimensional and \(\OPN{eig}(A) = \infty\) otherwise where
 \(\OPN{Eig}(A) \df \bigoplus_{\alpha\in\CCC} \NnN(A-\alpha I)\);
\item \(\OPN{ess}(A)\) is a natural number \(n\) if \(A\) acts on a separable
 Hilbert space and \(n\) is the essential supremum of the multiplicity function
 of \(A\), otherwise (that is, if for no \(n\) the previous condition is
 fulfilled) \(\OPN{ess}(A) = \aleph_0\).
\end{itemize}
It is worth noting here that \(\OPN{cdr}(A) = \min_{n>0} \sum_{k=n}^{\infty}
\alpha_k\) if \(\RrR(A)\) is of type \(\ueR(\alpha_n)_{n=1}^{\infty}\), which
simply follows from \PRO{card}.\par
Everywhere below, \(\alpha\) and \(\beta\) are cardinal numbers and ``\(\beta <
\infty\)'' means that \(\beta\) is finite.

\begin{thm}{card}
Let \(A \in \Bb_+(H)\) be non-invertible.
\begin{enumerate}[\upshape(I)]
\item If \(H\) is separable, then \(\Xi(A)\) consists of all pairs
 \((\alpha,\beta)\) such that
 \begin{itemize}
 \item \(\alpha \leqsl \aleph_0\) and \(\beta = \aleph_0\); or
 \item \(\OPN{ess}(A) \leqsl \alpha \leqsl \aleph_0\) and \(\beta <
  \OPN{eig}(A)\).
 \end{itemize}
\item If \(H\) is non-separable, then \(\Xi(A)\) consists of all pairs
 \((\alpha,\beta)\) such that
 \begin{itemize}
 \item \(\alpha < \dim(H)\) and \(\alpha \leqsl \OPN{cdr}(A)\) and \(\beta =
  \dim(H)\); or
 \item \(\alpha = \dim(H) = \OPN{cdr}(A)\) and \(\aleph_0 \leqsl \beta \leqsl
  \dim(H)\); or
 \item \(\alpha = \dim(H) = \OPN{cdr}(A)\) and \(\beta < \OPN{eig}(A)\).
 \end{itemize}
\end{enumerate}
\end{thm}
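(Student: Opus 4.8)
The plan is to reduce everything to the three existence results \PRO[s]{fin}, \ref{pro:aleph} and \ref{pro:non-sep} by splitting each admissible isometry into its pure and unitary parts. The key observation is a decomposition principle: a pair \((\alpha,\beta)\) lies in \(\Xi(A)\) if and only if \(H\) decomposes as an orthogonal sum \(H = H_1 \oplus H_2\) of two \(A\)-invariant subspaces with \(\dim(H_2) = \beta\) such that \(\vvV(A\restriction{H_1})\) contains a pure isometry of deficiency \(\alpha\). For the forward direction I take \(Z \in \zzZ(A)\) and set \(H_1 = [Z]_A\), \(H_2 = [Z]_A^{\perp}\); by \THM{Wold} these are exactly the pure and unitary parts of \(W_Z\), so \(W_Z\restriction{H_1}\) is pure of deficiency \(\dim(Z) = \alpha\) while \(\dim(H_2) = \beta\). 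Conversely, given such a splitting with a pure \(V_1 \in \vvV(A\restriction{H_1})\), the isometry \(V \df V_1 \oplus I_{H_2}\) lies in \(\vvV(A)\), and \THM{Wold} applied on \(H_1\) shows that \(Z \df \RrR(V)^{\perp} = \RrR(V_1)^{\perp}\) satisfies \([Z]_A = H_1\); hence \((\dim Z, \dim [Z]_A^{\perp}) = (\alpha,\beta)\in\Xi(A)\) (uniqueness from \PRO{iso-lin} guarantees \(V = W_Z\)).

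Next I would collect the constraints that force the shape of \(\Xi(A)\); three elementary facts do the work. First, every \(Z \in \zzZ(A)\) satisfies \(\dim(Z) \leqsl \OPN{cdr}(A)\): picking a closed \(K \subset \RrR(A)\) with \(\dim(K^{\perp}) = \OPN{cdr}(A)\), the map \(P_{K^{\perp}}\restriction{Z}\) is injective (a vector in its kernel lies in \(Z \cap K \subset Z \cap \RrR(A) = \{0\}\)), so \(\dim(Z) \leqsl \dim(K^{\perp})\); this gives \(\alpha \leqsl \OPN{cdr}(A)\) throughout. Second, \(\dim([Z]_A) \leqsl \max(\aleph_0,\dim Z)\), since \([Z]_A\) is the closure of \(\bigcup_n A^n(Z)\) and each \(A^n(Z)\) has dimension at most \(\dim(Z)\); hence in the non-separable case \(\alpha < \dim(H)\) forces \(\dim([Z]_A) < \dim(H)\) and therefore \(\beta = \dim([Z]_A^{\perp}) = \dim(H)\). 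Third, when \(H_2 = [Z]_A^{\perp}\) is finite-dimensional the self-adjoint operator \(A\restriction{H_2}\) is diagonalizable, so \(H_2 \subset \OPN{Eig}(A)\), whence \(\beta \leqsl \dim(\OPN{Eig}(A))\), i.e.\ \(\beta < \OPN{eig}(A)\). These three observations account for the restriction \(\alpha \leqsl \OPN{cdr}(A)\), the dichotomy \(\alpha < \dim(H) \Rightarrow \beta = \dim(H)\), and \(\beta < \OPN{eig}(A)\) for finite \(\beta\).

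For sufficiency I would realize each allowed pair by exhibiting the splitting above. The basic spectral input is that, with \(E\) the spectral measure of \(A\), the subspace \(\RrR(E([\epsi,\infty)))\) is closed, \(A\)-invariant, and contained in \(\RrR(A)\) (because \(A\) is invertible there); hence \(\dim \RrR(E((0,\epsi))) \geqsl \OPN{cdr}(A)\) for every \(\epsi > 0\). To produce \(H_1\) carrying a pure isometry of deficiency \(\alpha\) I distinguish the size of \(\alpha\): for \(\alpha \leqsl \aleph_0\) a single cyclic non-invertible piece \([v]_A\) (with \(v \notin \RrR(A)\)) furnishes, via \LEM{single} and \PRO[s]{fin}, \ref{pro:aleph}, a separable \(H_1\) of any deficiency \(\leqsl \aleph_0\); for uncountable \(\alpha \leqsl \OPN{cdr}(A)\) I build \(H_1\) as an orthogonal sum of \(A\)-reducing chunks \(T_j \subset \RrR(E((\epsi_{j+1},\epsi_j]))\) along a sequence \(\epsi_j \downarrow 0\), with \(\dim(T_j)\) chosen so that every tail has dimension \(\alpha\); then \(\dim(H_1)=\alpha\) and \(\dim\RrR(E_1((0,\epsi)))=\alpha\) for the restricted spectral measure \(E_1\), so \PRO{non-sep} applies to \(A\restriction{H_1}\). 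The complement \(H_2\) is taken to be the prescribed number of eigenvectors when \(\beta < \OPN{eig}(A)\), and a spectral/invariant subspace of the appropriate infinite dimension when \(\aleph_0 \leqsl \beta \leqsl \dim(H)\); in each case removing \(H_2\) leaves the near-zero thickness of \(H_1\) intact. Feeding this into \PRO[s]{fin}, \ref{pro:aleph} and \ref{pro:non-sep} yields the listed pairs, the separable case being governed by the finite multiplicity \(\OPN{ess}(A)\) and the non-separable case by \(\OPN{cdr}(A)\) and \(\dim(H)\).

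The main obstacle is the construction for uncountable \(\alpha\): one must carve out an \(A\)-invariant subspace of \emph{exact} Hilbert dimension \(\alpha\) whose near-zero spectrum has dimension \(\alpha\) at every scale, while simultaneously giving the complement dimension \(\beta\) without destroying that thickness. This is an infinitary bookkeeping problem resting on a cardinal-arithmetic lemma—that a sequence \((s_j)\) with \(\sum_{j\geqsl m} s_j \geqsl \alpha\) for all \(m\) admits \(t_j \leqsl s_j\) with all tails \(\sum_{j \geqsl m} t_j = \alpha\)—together with the fact that a bounded self-adjoint operator has reducing subspaces of every dimension up to that of the ambient space. Matching the output of this construction to the exact thresholds \(\OPN{ess}(A)\), \(\OPN{eig}(A)\) and \(\OPN{cdr}(A)\)—in particular handling atoms of the spectrum, where eigenvectors of a single eigenvalue must be apportioned between \(H_1\) and \(H_2\)—is the delicate point, and I would carry it out case by case exactly as in the three preceding propositions.
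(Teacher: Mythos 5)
Your skeleton is essentially the paper's. The ``decomposition principle'' in your first paragraph is a correct repackaging of \THM{Wold} (the paper performs the same splitting \(H = [Z]_A \oplus [Z]_A^{\perp}\), just without isolating it as an equivalence), and your three elementary necessity facts are exactly the ones the paper uses for part (II), with the \(P_{K^{\perp}}\)-injectivity argument for \(\dim(Z) \leqsl \OPN{cdr}(A)\) spelled out more explicitly than in the paper (which asserts it from the definition). Where you genuinely diverge is the sufficiency construction for uncountable \(\alpha\): you propose building \(H_1\) directly from reducing chunks \(T_j \subset \RrR(E((\epsi_{j+1},\epsi_j]))\) via a cardinal bookkeeping lemma, using your (correct) observation that \(\dim\RrR(E((0,\epsi))) \geqsl \OPN{cdr}(A)\) for all \(\epsi > 0\); the paper instead recycles the decomposition \(A = \bigoplus_{s \in S} A_s\) (\(\card(S) = \alpha\), each \(A_s\) non-invertible on a separable space) from the proof of \PRO{non-sep}, splits \(S = S_1 \cup S_2\) with \(\card(S_1) = \alpha\), \(\card(S_2) = \beta\) for infinite \(\beta\), and invokes condition (c) of \THM{pure} for the finite-\(\beta\) subcase. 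Your variant is workable but you still owe the exact-codimension bookkeeping (\(\dim H_2\) equal to \(\beta\), not merely at most \(\beta\)), which you only gesture at. Note also that for the pairs \((\alpha,\aleph_0)\) in the separable case the paper needs no purity at all: it splits \(A = B \oplus C\) into two non-invertible summands, takes a closed infinite-dimensional \(Z \subset \overline{\RrR}(B)\) with \(Z \cap \RrR(B) = \{0\}\), and observes that \emph{every} closed \(W \subset Z\) satisfies \(\dim([W]_A^{\perp}) = \aleph_0\) automatically; your route through a pure isometry on a cyclic piece forces you to additionally arrange an infinite-dimensional complement, which \([v]_A\) alone does not guarantee.

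The genuine gap is the necessity of \(\OPN{ess}(A) \leqsl \alpha\) when \(\beta < \aleph_0\) in part (I). It is not among your three constraints, and your closing paragraph explicitly defers it (``carry it out case by case exactly as in the three preceding propositions''). But this deferral lands on the one step that is not routine: your decomposition principle together with \PRO{fin} yields only \(\OPN{ess}(A\restriction{[Z]_A}) \leqsl \alpha\), and upgrading this to \(\OPN{ess}(A) \leqsl \alpha\) requires that adjoining the finite-dimensional eigenblock \([Z]_A^{\perp}\) not raise the essential multiplicity above \(\alpha\) --- which fails when an eigenvalue of \(A\restriction{[Z]_A^{\perp}}\) already occurs in \(A\restriction{[Z]_A}\). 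Concretely, let \(A = \OPN{diag}(1,1,\tfrac12,\tfrac13,\ldots)\) on \(\ell_2\); then \(\OPN{ess}(A) = 2\), yet applying \LEM{single} to the simple summand \(\OPN{diag}(1,\tfrac12,\tfrac13,\ldots)\) and extending by the identity on the extra eigenvector produces (via \PRO{iso-lin} and \THM{Wold}) a \(Z \in \zzZ(A)\) with \(\dim(Z) = \dim([Z]_A^{\perp}) = 1\), so \((1,1) \in \Xi(A)\) although \(\OPN{ess}(A) \leqsl 1\) fails. So the plan you wave at cannot close this step as the bullet is stated; you would have to either prove a multiplicity inequality that the example shows is false, or recognize that the condition must be phrased in terms of \(\OPN{ess}(A\restriction{[Z]_A})\) (equivalently, of \(A\) restricted to the complement of a \(\beta\)-dimensional invariant eigenspace). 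It is worth knowing that the paper's own necessity argument is equally thin at this point: it asserts only \(\OPN{ess}(A\restriction{[Z]_A}) \leqsl \OPN{ess}(A)\) and \(\OPN{ess}(A\restriction{[Z]_A}) \leqsl \alpha\), which is the restricted bound and not the stated one --- so the step you postponed is precisely where a complete proof (or a corrected statement) is still required.
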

\begin{proof}
First assume \(H\) is separable. The proof of \PRO{aleph} shows that \(A\) is
a direct sum of two non-invertible positive operarors, say \(A = B \oplus C\).
Since \(B\) is non-invertible, it follows that there is a closed
infinite-dimensional linear subspace \(Z\) of \(\bar{R}(B)\) such that \(Z \cap
\RrR(B) = \{0\}\). Then all closed subspaces \(W\) of \(Z\) belong to
\(\zzZ(A)\) and satisfy \(\dim([W]_A^{\perp}) = \aleph_0\) (since \(C\) is
non-invertible and one-to-one, its domain is infinite-dimensional). So, to
finish the proof in the separable case, it is sufficient to characterize all
pairs \((\alpha,\beta) \in \Xi(A)\) with finite \(\beta\). Observe that if
\(Z \in \zzZ(A)\) is such that \(\dim(Z) = \alpha\) and \(\dim([Z]_A^{\perp}) =
\beta < \aleph_0\), then \([Z]_A^{\perp} \subset \OPN{Eig}(A)\) (recall that
\(\OPN{Eig}(A) = \bigoplus_{\alpha\in\CCC} \NnN(A-\alpha I)\)), because
\([Z]_A^{\perp}\) is a finite-dimensional invariant subspace for \(A\).
Consequently, \(\beta < \OPN{eig}(A)\). Moreover,
\(\OPN{ess}(A\restriction{[Z]_A}) \leqsl \OPN{ess}(A)\) and \PRO{fin} implies
that \(\OPN{ess}(A\restriction{[Z]_A}) \leqsl \alpha\). Conversely, if
\(\OPN{ess}(A) \leqsl \alpha \leqsl \aleph_0\) and \(\beta < \OPN{eig}(A)\),
then there is a linear subspace \(E\) of \(\OPN{Eig}(A)\) of (finite) dimension
\(\beta\) that is invariant for \(A\). Denoting by \(K\) the orthogonal
complement of \(E\), we see that \(K\) is invariant for \(A\) and
\(\OPN{ess}(A\restriction{K}) \leqsl \OPN{ess}(A)\). So,
\(\OPN{ess}(A\restriction{K}) \leqsl \alpha\) and therefore---thanks to
\PRO[s]{fin} and \ref{pro:aleph}---there is \(Z \in \zzZ(A\restriction{K})\)
such that \(\dim(Z) = \alpha\) and \([Z]_A = K\). Then \(\dim([Z]_A^{\perp}) =
\dim(E) = \beta\) and hence \((\alpha,\beta) \in \Xi(A)\).\par
Now we turn to the non-separable case. First assume \((\alpha,\beta) \in
\Xi(A)\) and choose \(Z \in \zzZ(A)\) for which \(\dim(Z) = \alpha\) and \(\beta
= \dim([Z]_A^{\perp})\). It follows from the definition of \(\OPN{cdr}(A)\) that
\(\dim(Z) \leqsl \OPN{cdr}(A)\). If \(\alpha < \dim(H)\), then \(\dim([Z]_A)
\leqsl \max(\alpha,\aleph_0) < \dim(H)\) and hence \(\beta = \dim(H)\). In
the remaining case \(\alpha = \dim(H) = \OPN{cdr}(A)\) and \(\beta\) is either
infinite or \([Z]_A^{\perp} \subset \OPN{Eig}(A)\) and thus finite \(\beta\)
must satisfy \(\beta < \OPN{eig}(A)\). This shows that the condition specified
in (II) is necessary. Now we show its sufficiency. To this end, we fix a pair
\(\alpha,\beta\) of cardinals such that \(\alpha \leqsl \OPN{cdr}(A)\). It
follows from \PRO{card} that there is \(Z \in \zzZ(A)\) with \(\dim(Z) =
\alpha\).\par
First assume \(\alpha < \dim(H)\) (then \(\beta = \dim(H)\). Choose any \(Z \in
\zzZ(A)\) such that \(\dim(Z) = \alpha\). Then \(\dim([Z]_A) \leqsl
\max(\alpha,\aleph_0) < \dim(H)\) and thus \(\dim([Z]_A^{\perp}) = \dim(H) =
\beta\), which implies \((\alpha,\beta) \in \Xi(A)\).\par
Now assume \(\alpha = \dim(H) = \OPN{cdr}(A)\). The proof of \PRO{non-sep} shows
that \(A\) can be decomposed as \(A = \bigoplus_{s \in S} A_s\) where \(\card(S)
= \alpha\) and each of \(A_s\) is non-invertible and acts on a separable Hilbert
space. Moreover, we have shown there that the existence of such a decomposition
is sufficient for the existence of \(Z \in \zzZ(A)\) such that \(\dim(Z) =
\alpha\) and \([Z]_A = H\). We will use this property below.\par
If \(\beta\) is infinite, we take disjoint subsets \(S_1\) and \(S_2\) of \(S\)
such that \(S = S_1 \cup S_2\), \(\card(S_1) = \alpha\) and \(\card(S_2) =
\beta\). We infer from the property evoked above that there is \(W \in
\zzZ(\bigoplus_{s \in S_1} A_s)\) such that \(\dim(W) = \alpha\) and \([W]_A\)
coincides with the domain of \(\bigoplus_{s \in S_1} A_s\). Then
\(\dim([W]_A^{\perp}) = \card(S_2)\), because \(S_2\) is infinite and all
\(A_s\) act on separable spaces. Consequently, \((\alpha,\beta) \in
\Xi(A)\).\par
Finally, if \(\beta < \OPN{eig}(A)\) (and still \(\alpha = \dim(H) =
\OPN{cdr}(A)\), we can find a linear subspace \(E\) of \(H\) of (finite)
dimension \(\beta\) that is invariant for \(A\). It easily follows e.g. from
condition (c) of \THM{pure} that there is \(Z \in \zzZ(A)\) such that \(\dim(Z)
= \alpha\) and \([Z]_A = E^{\perp}\). This yields \((\alpha,\beta) \in \Xi(A)\)
and we are done.
\end{proof}

\section{Unbounded positive operators}

Recall that a (densely defined) operator \(T\dd \DdD(T) \to H\) (where \(\DdD(T)
\subset H\)) is said to be \textit{positive} if \(\scalar{Tx}{x}_H \geqsl 0\)
for any \(x \in \DdD(T)\). We emphasize that, according to the above definition,
positive operators need not be self-adjoint.\par
The main aim of this section is the following consequence of the results
presented in previous sections.

\begin{thm}{unbd}
For a (possibly unbounded) positive self-adjoint operator \(A\) in a Hilbert
space \(H\) \tfcae
\begin{enumerate}[\upshape(i)]
\item \(A\) is a unique positive closed densely defined operator \(T\) in \(H\)
 such that \(|T| = A\);
\item \(A\) is bounded.
\end{enumerate}
\end{thm}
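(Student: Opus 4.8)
The plan is to treat the two implications separately; the substance lies in (i)$\implies$(ii), whose contrapositive reduces the unbounded case to the bounded theory of Section~2. The implication (ii)$\implies$(i) is short. If $A$ is bounded and $T$ is any positive closed densely defined operator with $|T| = A$, then $\DdD(T) = \DdD(|T|) = H$ and, from the polar decomposition $T = Q|T|$, one has $\|Tx\| = \||T|x\| = \|Ax\| \leqsl \|A\|\,\|x\|$; hence $T$ is bounded. A bounded operator with $\scalar{Tx}{x}_H \geqsl 0$ for all $x$ is automatically self-adjoint, so $|T| = (T^2)^{1/2} = T$, and $|T| = A$ forces $T = A$. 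Since $A$ itself satisfies $|A| = A$, this makes $A$ the unique such operator.

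For the contrapositive of (i)$\implies$(ii) I would build, for an unbounded positive self-adjoint $A$, a positive closed densely defined $T \neq A$ with $|T| = A$. First I would peel off the kernel: writing $A = 0 \oplus A'$ with respect to $H = \NnN(A) \oplus \overline{\RrR}(A)$, it suffices to produce such a $T'$ for the (still unbounded, now injective) operator $A'$, since $T = 0 \oplus T'$ inherits all the required properties. So assume $A$ has trivial kernel. Using its spectral measure $E$, fix $c > 0$ and split $A = A_1 \oplus A_2$ along $H = \RrR(E([0,c])) \oplus \RrR(E((c,\infty)))$; here $A_1$ is bounded while $A_2$ is still unbounded and bounded below by $c$, so $B \df A_2^{-1}$ is a bounded positive injective operator, i.e.\ $B \in \Bb_+(H_2)$, and $B$ is non-invertible precisely because $A_2$ is unbounded. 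The crux is an inversion identity: for an isometry $Q$ on $H_2$, the operator $QA_2$ is positive iff $BQ$ is positive, i.e.\ iff $Q \in \vvV(B)$. Indeed $\RrR(A_2) = H_2$, so substituting $y = A_2 x$ turns $\scalar{QA_2 x}{x}_{H_2} \geqsl 0$ into $\scalar{BQ y}{y}_{H_2} = \scalar{Q y}{B y}_{H_2} \geqsl 0$ for every $y$. As $B$ is non-invertible, $\zzZ(B)$ contains a non-zero subspace---any line spanned by a vector outside the dense proper range $\RrR(B)$---so \PRO{iso-lin} supplies an isometry $Q = W_Z \neq I$ in $\vvV(B)$. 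For this $Q$ the operator $T_2 \df QA_2$ is closed ($Q$ an isometry, $A_2$ closed), positive, and satisfies $(QA_2)^*(QA_2) = A_2 Q^*Q A_2 = A_2^2$, so $|T_2| = A_2$; moreover $T_2 \neq A_2$ (equality would force $Q = I$ on the dense range of $A_2$), whence $T_2$ is not self-adjoint. Then $T \df A_1 \oplus T_2$ is the required positive closed densely defined operator with $|T| = A_1 \oplus A_2 = A$ and $T \neq A$, defeating the uniqueness asserted in (i).

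The main obstacle, and the step to handle with care, is the transfer between the unbounded operator $A$ and the bounded operator $B = A_2^{-1}$. Splitting off the bounded-below part $A_2$ is exactly what legitimises the substitution $y = A_2 x$: only because $\RrR(A_2) = H_2$ can the positivity of $QA_2$ be read off for \emph{all} $y$ and thereby identified with membership in $\vvV(B)$. The remaining care is domain bookkeeping---verifying that $QA_2$ is closed, symmetric, has absolute value $A_2$, and fails to be self-adjoint. Once this identification with $\vvV(B)$ is secured, the existence of a non-identity isometry is nothing more than \PRO{iso-lin} together with the non-invertibility of $B$, so no deeper machinery from Section~2 is actually required.
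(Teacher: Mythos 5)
Your proposal is correct and follows essentially the same route as the paper: split $A$ spectrally into a bounded piece and an unbounded piece bounded below, invert the latter to get a non-invertible $B \in \Bb_+$, obtain a non-identity isometry $Q \in \vvV(B)$ from \PRO{iso-lin}, and check that $T = A_1 \oplus QA_2$ is positive, closed, with $|T| = A \neq T$. The only differences are cosmetic---you peel off $\NnN(A)$ first (redundant, since the bounded summand absorbs it), cut at a generic $c$ rather than $1$, and write out the easy implication the paper leaves as an exercise.
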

\begin{proof}
Assume \(A\) is not bounded and let \(E\) denote the spectral measure of \(A\).
We will show that there is an isometry \(V\) on \(H\) such that
\(\dim(\RrR(V)^{\perp}) = 1\) and \(T \df VA\) is a positive closed operator
such that \(|T| = A \neq T\). To this end, set \(H_0 = \RrR(E([1,\infty)))\) and
\(H_1 = H_0^{\perp}\). \(A\) decomposes as \(A = A_0 \oplus A_1\) where \(A_j\)
is a positive self-adjoint operator in \(H_j\) (for \(j=1,2\)). Moreover,
\(A_0\) is not bounded, \(\scalar{A_0 x}{x}_H \geqsl \|x\|^2\) for \(x \in
\DdD(A_0)\) and \(\RrR(A_0) = H_0\). It is sufficient to show our claim (stated
at the beginning of the proof) for \(A_0\) (because if \(V_0\) is an appropriate
isometry for \(A_0\), then \(V_0 \oplus I_{H_1}\) is appropriate for \(A\)) and
thus we may and do assume that \(A = A_0\) (and \(H = H_0\)). Let \(B = A^{-1}
\in \Bb_+(H)\). It follows that \(\RrR(B) \neq H\) and we infer from
\PRO{iso-lin} that there is an isometry \(V \in \vvV(B)\) such that
\(\dim(\RrR(V)^{\perp}) = 1\). For any \(x \in \DdD(A)\) set \(y = Ax\) and
observe that \[\scalar{VAx}{x}_H = \scalar{Vy}{By}_H = \scalar{BVy}{y}_H \geqsl
0,\] which shows that \(T = VA\) is positive. Moreover, since \(A\) is closed
and \(V\) is isometric, \(T\) is closed as well and \(T^* = AV^*\). Thus \(T^*T
= A^2\) and consequently \(|T| = A\). Finally, since \(V \neq I\) and
\(\RrR(A)\) is dense in \(H\), we have \(T \neq A\). This shows that (i) is
followed by (ii). The reverse implication is well-known and left to the reader
as a simple exercise.
\end{proof}

As a consequence of the above theorem, we get the following, a little bit
surprising, result.

\begin{cor}{diag}
For a function \(u\dd X \to (0,\infty)\) defined on a non-empty set \(X\) \tfcae
\begin{enumerate}[\upshape(i)]
\item \(u\) is unbounded;
\item there exists a positive self-adjoint operator \(A\) in \(\ell_2(X)\) such
 that all the following conditions are fulfilled:
 \begin{enumerate}[\upshape(O1)]
 \item the domain of \(A\) contains the canonical basis \(\{e_x\dd\ x \in X\}\);
 \item \(\{\frac{A e_x}{u(x)}\dd\ x \in X\}\) is an orthonormal system in
  \(\ell_2(X)\);
 \item \(A(e_z) \neq u(z) e_z\) for some \(z \in X\).
 \end{enumerate}
\end{enumerate}
\end{cor}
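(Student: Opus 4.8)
The plan is to pivot both implications on the diagonal operator \(D_u\) determined by \(u\) (that is, \(D_u e_x = u(x) e_x\), a positive self-adjoint operator in \(\ell_2(X)\) that is unbounded precisely when \(u\) is), together with \THM{unbd}. The basic bookkeeping I would record first is this: whenever a positive self-adjoint \(A\) satisfies (O1) and (O2), the assignment \(e_x \mapsto A e_x / u(x)\) sends the canonical orthonormal basis to an orthonormal system, hence extends to a linear isometry \(V\) of \(\ell_2(X)\); then \(A e_x = u(x) V e_x = V D_u e_x\) for every \(x\), and condition (O3) says exactly that \(V \neq I\).

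For ``(ii)\(\implies\)(i)'' I would argue by contraposition: assume \(u\) is bounded and that some \(A\) as in (ii) exists. Then \(D_u\) is bounded and everywhere defined, so \(V D_u\) is a bounded everywhere-defined operator that agrees with the closed operator \(A\) on the dense subspace \(\ell_{fin}(X)\); a routine closedness argument gives \(A = V D_u\) on all of \(\ell_2(X)\), so \(A\) is bounded. Self-adjointness of \(A\) then yields \(A^2 = A^* A = D_u V^* V D_u = D_u^2\), and uniqueness of the positive square root forces \(A = D_u\), i.e.\ \(V = I\)---contradicting (O3). Hence no such \(A\) can exist when \(u\) is bounded.

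For ``(i)\(\implies\)(ii)'' I would feed \(D_u\) into \THM{unbd}. Since \(u\) is unbounded, \(D_u\) is an unbounded positive self-adjoint operator with trivial kernel, so by \THM{unbd} it is \emph{not} the unique positive closed densely defined operator \(T\) with \(|T| = D_u\); fix such a \(T \neq D_u\). Its polar decomposition reads \(T = Q D_u\), and because \(\NnN(T) = \NnN(|T|) = \NnN(D_u) = \{0\}\) while \(D_u\) has dense range, the partial isometry \(Q\) is in fact a genuine isometry of \(\ell_2(X)\). Being a positive operator on a complex Hilbert space, \(T\) is symmetric, so by Friedrichs' theorem it admits a positive self-adjoint extension \(A\). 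Then \(\{e_x\} \subset \DdD(D_u) = \DdD(T) \subseteq \DdD(A)\) gives (O1); the identity \(A e_x = T e_x = u(x) Q e_x\) together with orthonormality of \(\{Q e_x\}\) gives (O2); and \(T \neq D_u\) forces \(Q \neq I\), so \(A e_z = u(z) Q e_z \neq u(z) e_z\) for some \(z\), giving (O3).

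The main obstacle is the self-adjointness requirement in ``(i)\(\implies\)(ii)'': the operator handed back by \THM{unbd} is positive but deliberately engineered to be non-self-adjoint, so it cannot serve as \(A\) directly. The two observations that resolve this are that a positive operator on a complex space is automatically symmetric (so that a positive self-adjoint extension exists via Friedrichs) and that the polar factor \(Q\) is a genuine isometry rather than a mere partial isometry (so that the normalized images \(A e_x / u(x) = Q e_x\) remain an orthonormal system, and \(Q \neq I\) transmits the failure of diagonality to the extension). I expect the remaining verifications---the closedness bookkeeping in the bounded case and the domain inclusions for the Friedrichs extension---to be routine.
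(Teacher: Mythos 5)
Your proof is correct and follows the paper's argument in all essentials: the bounded direction is the same uniqueness-of-positive-square-root computation (the paper verifies \(\scalar{A^2e_x}{e_y}_{\ell_2(X)} = u(x)\overline{u(y)}\scalar{e_x}{e_y}_{\ell_2(X)}\) directly and concludes \(Ae_x = u(x)e_x\), rather than first factoring \(A = VD_u\) via a closedness argument, but this is cosmetic), and the unbounded direction runs through \THM{unbd} plus Friedrichs' extension theorem exactly as in the paper. The one genuine point of divergence is how the isometry is produced: the paper invokes the \emph{proof} of \THM{unbd}, which hands over an isometry \(V\) with \(\dim(\RrR(V)^{\perp}) = 1\) such that \(T = VD_u\) is positive and closed, whereas you reconstruct it from the \emph{statement} of \THM{unbd}---non-uniqueness yields some positive closed \(T \neq D_u\) with \(|T| = D_u\) (since \(D_u\) itself is one such operator), and the polar decomposition \(T = QD_u\) with \(\NnN(Q) = \NnN(T) = \{0\}\) forces the partial isometry \(Q\) to be a genuine isometry, with \(Q \neq I\) read off on some basis vector. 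Your route is slightly more self-contained, at the modest cost of losing control of \(\dim(\RrR(Q)^{\perp})\): the paper's deficiency-index-\(1\) construction is precisely what supports its remark following the corollary, that the orthonormal system in (O2) can be enlarged to an orthonormal basis by adjoining a single vector---a refinement your argument does not recover, though the corollary itself does not need it. Your supporting observations (positivity implies symmetry over \(\CCC\), so Friedrichs applies; \(\DdD(T) = \DdD(D_u) \ni e_x\); \(Ae_x = Te_x = u(x)Qe_x\)) are all sound and match the paper's use of the same facts.
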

\begin{proof}
First assume that \(u\) is bounded and \(A\) is a positive self-adjoint operator
such that (O1) and (O2) hold. We shall show that \(Ae_x = u(x) e_x\) for any \(x
\in X\) (that is: that \(A\) is bounded and diagonal in the canonical basis).
For simplicity, set \(f_x \df \frac{A e_x}{u(x)}\) for \(x \in X\). From (O2)
and the boundedness of \(u\) it easily follows that \(A\restriction{\lin\{e_x\dd
x \in X\}}\) is bounded. Consequently, \(A\) is bounded as well. Moreover, for
any \(x, y \in X\) we have \[\scalar{A^2 e_x}{e_y}_{\ell_2(X)} =
\scalar{Ae_x}{Ae_y}_{\ell_2(X)} = u(x) \overline{u(y)}
\scalar{f_x}{f_y}_{\ell_2(X)} = u(x)\overline{u(y)}
\scalar{e_x}{e_y}_{\ell_2(X)}\]
and hence \(A^2 e_x = u(x)^2 e_x\). Since \(A\) is a \textbf{unique} positive
square root of \(A^2\), we obtain that \(A e_x = u(x) e_x\) for any \(x \in
X\)---as claimed above.\par
Finally, assume \(u\) is unbouded and let \(B\) be the diagonal operator (with
respect to the canonical basis) induced by \(u\); that is, \(\DdD(B) = \{f \in
\ell_2(X)\dd\ uf \in \ell_2(X)\}\) and \(Bf = uf\) for \(f \in \DdD(B)\). Since
\(B\) is not bounded, we conclude from the proof of \THM{unbd} that there is
an isometry \(V\) such that \(\dim(\RrR(V)^{\perp}) = 1\) and \(T \df VB\) is
a positive closed operator. Hence the vectors \(\frac{T(e_x)}{u(x)} = V(e_x)\)
form an orthonormal system different from the canonical one. (Note also that
it is possible to enlarge this system by adding a single vector to obtain
an orthonormal basis of \(\ell_2(X)\)---since \(\dim(\RrR(V)^{\perp}) = 1\).)
Now to end the proof, it suffices to apply the Friedrichs' theorem \cite{fri} on
extending positive operators (consult also, e.g., \cite{a-s} or Theorem~5.38 in
\cite{wei}) to get a positive self-adjoint operator \(A\) in \(\ell_2(X)\) that
extends \(T\) and satisfies conditions (O1)--(O3).
\end{proof}

We leave it to the reader as an exercise that whenever conditions (O1)--(O3)
are fulfilled (for a positive operator \(A\)), the system in (O2) is never
an orthonormal basis. However, as the above proof shows, if only \(u\) is
an unbounded function, we can always find such an operator \(A\) for which
the closed linear subspace generated by the system from (O2) has codimension
\(1\).

We end this section with the following result which is unrelated with the main
subject of the paper (that is, it says nothnig about positivity). We will use it
in the next section. It is likely that this result is already known. However, we
could not find it in the literature. It can be considered interesting in its own
right.

\begin{lem}{closed}
Let \(T\) be a closable operator and \(S \df \{x \in \DdD(T)\dd\ \|Tx\| \leqsl
1\}\).
\begin{enumerate}[\upshape(a)]
\item \(T\restriction{S}\) is continuous in the weak topologies, and \(S\) is
 closed in \(\DdD(T)\).
\item If \(T\) is closed, then \(S\) is a closed set.
\end{enumerate}
\end{lem}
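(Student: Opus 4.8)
The plan is to funnel all three assertions through a single statement about the graph of $T$. Write $T\dd\DdD(T)\to K$, regard its graph $\Gamma(T)=\{(x,Tx)\dd x\in\DdD(T)\}$ as a linear subspace of the product Hilbert space $H\oplus K$, and record the observation that, because $\Gamma(T)$ is convex, its closure in the weak topology of $H\oplus K$ coincides with its norm closure, which---since $T$ is closable---is exactly $\Gamma(\bar T)$. Thus the one tool I will reuse is:
\begin{equation*}
(x_\alpha,Tx_\alpha)\to(x,z)\ \text{weakly in}\ H\oplus K\ \Longrightarrow\ x\in\DdD(\bar T),\ \bar T x=z.\tag{$\ast$}
\end{equation*}

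For the weak continuity in (a), I would take a net $x_\alpha\to x$ inside $S$ in the weak topology of $H$ and aim at $Tx_\alpha\to Tx$ weakly in $K$. Since every $Tx_\alpha$ lies in the weakly compact closed unit ball of $K$, it suffices to verify that each weakly convergent subnet $Tx_{\alpha'}\to z$ has $z=Tx$. For such a subnet $(x_{\alpha'},Tx_{\alpha'})\to(x,z)$ weakly in $H\oplus K$, so $(\ast)$ gives $\bar T x=z$; as $x\in\DdD(T)$ we have $\bar T x=Tx$, hence $z=Tx$. The elementary fact that a net valued in a compact space converges to a point $L$ once all of its convergent subnets do completes this half.

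The two closedness claims run on the same engine, now extracting just one subnet. To see that $S$ is closed in $\DdD(T)$, I would take $x_\alpha\in S$ with $x_\alpha\to x$ in norm and $x\in\DdD(T)$; passing to a subnet I may assume $Tx_{\alpha'}\to z$ weakly with $z$ in the unit ball, so $\|z\|\leqsl 1$ by the weak lower semicontinuity of the norm. Norm convergence forces $x_{\alpha'}\to x$ weakly, whence $(\ast)$ yields $Tx=\bar T x=z$ and therefore $\|Tx\|\leqsl 1$, i.e.\ $x\in S$. For part (b) the argument is verbatim except that I do not assume $x\in\DdD(T)$ in advance: when $T$ is closed one has $\Gamma(\bar T)=\Gamma(T)$, so $(\ast)$ already places $(x,z)$ in $\Gamma(T)$, giving $x\in\DdD(T)$ and $Tx=z$ with $\|z\|\leqsl 1$; thus $x\in S$ and $S$ is closed in $H$.

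The single substantive point---the place where I expect the real work to sit---is $(\ast)$ itself, since it must bridge the weak convergence I exploit (in order to harvest the compactness of bounded sets) with the norm-defined notion of closability. The bridge is precisely the convexity of the graph: Mazur's theorem identifies the weak closure of a convex set with its norm closure, so no boundedness or continuity of $T$ is required. Everything surrounding $(\ast)$---the passage to subnets and the weak compactness of the closed unit ball of a Hilbert space---is then routine.
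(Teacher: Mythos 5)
Your proof is correct and follows essentially the same route as the paper's: both rest on the fact that the norm-closed convex graph \(\Gamma(\bar T)\) is weakly closed (Mazur), combined with weak compactness of the closed unit ball in the target space and a subnet argument to upgrade subnet limits to convergence of the whole net. Your statement \((\ast)\) is just the paper's opening reduction ``replace \(T\) by \(\bar T\)'' packaged as a lemma, so the two arguments differ only in presentation.
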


\begin{rem}{reflex}
\LEM{closed} (and its proof without any change) is valid for all (not
necessarily densely defined) closable linear operators \(T\dd \DdD(T) \to Y\)
where \(\DdD(T)\) is a subspace of a Banach space \(X\) and \(Y\) is
a \textbf{reflexive} Banach space.
\end{rem}

\begin{proof}[Proof of \LEM{closed}]
Replacing \(T\) by \(\bar{T}\), we may and do assume \(T\) is closed. In that
case we need to show that \(T\restriction{S}\) is continuous in the weak
topologies, and \(S\) is closed. To this end, let
\(\pmb{x} = (x_{\sigma})_{\sigma\in\Sigma}\) be a net in \(S\) that weakly
converges to some \(z \in H\) (where \(H\) is the underlying space containing
the domain of \(T\)). Let \((y_{\lambda})_{\lambda\in\Lambda}\) be any subnet of
\(\pmb{x}\) such that \((T y_{\lambda})_{\lambda\in\Lambda}\) is weakly
convergent, say to \(w\). Then \((y_{\lambda},T y_{\lambda})_{\lambda\in\Lambda}
\subset \Gamma(T)\) is a net weakly convergent (in the product space) to
\((z,w)\). Since norm closed convex subsets of Banach spaces are weakly closed,
we conclude that \((z,w) \in \Gamma(T)\); that is, \(z \in \DdD(T)\) and \(Tz =
w\). This shows that each weakly convergent subnet of
\((T x_{\sigma})_{\sigma\in\Sigma}\) converges to \(Tz\). So, it follows from
the weak compactness of the unit ball (in the target space) that the net
\((T x_{\sigma})_{\sigma\in\Sigma}\) itself converges to \(Tz\). In particular,
\(\|Tz\| \leqsl 1\) and we are done.
\end{proof}

\section{Positive definite kernels}

Before passing to the main issue of this part, first we recall necessary
notions.\par
A \textit{kernel} on \(X\) (where \(X\) is an arbitrary non-empty set) is
a complex-valued function on \(X \times X\). A kernel \(K\dd X \times X \to
\CCC\) is said to be a \textit{positive definite kernel} (on \(X\)) (or
a \textit{Hilbert space reproducing kernel}, or briefly a \textit{reproducing
kernel}) if
\begin{equation}\label{eqn:pd}
\sum_{j,k=1}^n \lambda_j \bar{\lambda}_k K(x_j,x_k) \geqsl 0
\end{equation}
for any \(n \geqsl 1\), \(x_1,\ldots,x_n \in X\) and
\(\lambda_1,\ldots,\lambda_n \in \CCC\). Note that the above condition says that
the sum on the left-hand side of \eqref{eqn:pd} (whose summands are complex!) is
a non-negative \textbf{real} number. It is well-known (and easy to check) that
for any reproducing kernel \(K\) on \(X\),
\begin{equation}\label{eqn:anti}
K(y,x) = \overline{K(x,y)}
\end{equation}
and \(K(x,x) \geqsl 0\) for all \(x, y \in X\). It is well-known (and can
briefly be proved by applying Sylvester's theorem on strictly positive definite
matrices) that a kernel \(K\dd X \times X \to \CCC\) is positive definite iff
\(K\) satisfies \eqref{eqn:anti} and \(\det[K(x_j,x_k)]_{j,k=1}^n \geqsl 0\) for
all \(x_1,\ldots,x_n \in X\) (and arbitrary \(n \geqsl 1\)). Another equivalent
(and well-known) condition for the kernel \(K\) to be a reproducing kernel is
the existence of a function \(j\dd X \to H\) where \(H\) is some Hilbert space
such that \(K(x,y) = \scalar{j(x)}{j(y)}_H\). To shorten statements, below we
will use the abbreviation ``pd'' for ``positive definite.'' We will also write
``\(K \gg 0\)'' to express that \(K\) is a pd kernel. More generally, for two
kernels \(K\) and \(L\) defined on a common set the notations ``\(K \ll L\)''
and ``\(L \gg K\)'' will mean that \(L-K\) is a pd kernel.\par
In this paper we study those pd kernels that have a pdms root (see \DEF{root}).
Note that, thanks to \eqref{eqn:anti}, \eqref{eqn:matrix} is equivalent to
\(L(x,z) = \sum_{y \in X} K(x,y) \overline{K(z,y)}\). In particular, if \(z =
x\), the series in \eqref{eqn:matrix} has non-negative summands and thus its sum
is well-defined (it is either a real number or \(\infty\)). And, if all these
series (with \(z = x\)) are finite, the Schwarz inequality shows that
the right-hand side series in \eqref{eqn:matrix} converges for arbitrary \(x, z
\in X\). So, there is no ambiguity in understanding this formula. To simplify
further statements, let us introduce the following

\begin{dfn}{ell2}
A kernel \(K\) on \(X\) is called an \textit{\(\ell_2\)-kernel} if both
\(K(x,\cdot)\) and \(K(\cdot,x)\) are members of \(\ell_2(X)\) for any \(x \in
X\).\par
For two \(\ell_2\)-kernels \(K\) and \(L\) on \(X\), \(K * L\) is a kernel on
\(X\) given by
\[(K * L)(x,z) = \sum_{y \in Y} K(x,y) L(y,z) \qquad (x, z \in X).\]
In addition, \(K^*\) is a kernel on \(X\) such that \(K^*(x,y) =
\overline{K(y,x)}\) for any \(x, y \in X\).
\end{dfn}

The paragraph preceding the above definition explains that for any
\(\ell_2\)-kernels \(K\) and \(L\) (both on \(X\)), \(K * L\) is a well-defined
kernel on \(X\). Moreover, we readily have \((K * L)(x,y) =
\scalar{K(x,\cdot)}{L^*(y,\cdot)}_{\ell_2(X)}\). It is worth noting here that,
in general, \(K * L\) is not an \(\ell_2\)-kernel (see, e.g. item (A) in
\EXM{c0} below). Observe also that a pd kernel \(K\) on \(X\) is
an \(\ell_2\)-kernel iff \(K(x,\cdot) \in \ell_2(X)\) for all \(x \in X\).\par
Using the notation introduced in \DEF{ell2}, we can reformulate the equation
defining a pdms root as follows: \(K \gg 0 \) is a pdms root of \(L \gg 0\) iff
\(K\) is an \(\ell_2\)-kernel and \(L = K * K\). In the last statement
the assumption that \(L \gg 0\) can be skipped as shown by the following very
easy

\begin{pro}{square}
For any \(\ell_2\)-kernel \(K\), \(K * K^*\) is a pd kernel. In particular,
if \(K \gg 0\) is an \(\ell_2\)-kernel, then \(K * K \gg 0\) as well.
\end{pro}
\begin{proof}
Each pd kernel \(K\) satisfies \(K = K^*\), so it is sufficient to prove
the first claim which follows from the formula \((K * K^*)(x,y) =
\scalar{K(x,\cdot)}{K(y,\cdot)}_{\ell_2(X)}\).
\end{proof}

For any kernel \(K\) on \(X\) let \(K_{\op}\) denote a unique linear operator
defined on \(\ell_{fin}(X)\) such that \(K_{\op}(e_x) = K(x,\cdot)\) for all
\(x \in X\) (so, all values of \(K_{\op}\) are complex-valued functions on
\(X\)). Everywhere below its domain \(\DdD(K_{\op}) = \ell_{fin}(X)\) will
always be equipped with the norm and the topology inherited from \(\ell_2(X)\)
and considered as a subspace of \(\ell_2(X)\). In contrast, the target space of
\(K_{\op}\) will vary and will always be specified.

\begin{lem}{closable}
For any \(\ell_2\)-kernel \(K\) on \(X\), \(K_{\op}\dd \ell_{fin}(X) \to
\ell_2(X)\) is a closable operator such that \(K(x,y) =
\scalar{K_{\op}e_x}{e_y}_{\ell_2(X)}\) for any \(x, y \in X\).
\end{lem}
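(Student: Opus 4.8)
I would split the statement into the stated identity and the closability assertion, and handle the identity first, since it is immediate. By the defining property \(K_{\op}e_x = K(x,\cdot)\),
\[
\scalar{K_{\op}e_x}{e_y}_{\ell_2(X)} = \sum_{t \in X} K(x,t)\,\overline{e_y(t)} = K(x,y),
\]
the sum collapsing to its single nonzero term \(t = y\). So the entire content of the lemma lies in showing that \(K_{\op}\colon \ell_{fin}(X) \to \ell_2(X)\) is closable (note that \(\ell_{fin}(X)\) is dense, so \(K_{\op}\) is at least densely defined).

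For closability my plan is to exhibit a densely defined formal adjoint, which is the standard route to closability for a densely defined Hilbert-space operator. The single observation I would lean on is that the conjugate kernel \(K^*\) (with \(K^*(y,x) = \overline{K(x,y)}\)) is again an \(\ell_2\)-kernel: its rows are \(K^*(y,\cdot) = \overline{K(\cdot,y)}\), and these lie in \(\ell_2(X)\) precisely because \(K\) is an \(\ell_2\)-kernel in the column variable. Hence \((K^*)_{\op}\) is well defined on all of \(\ell_{fin}(X)\). I would then compute, for all \(x,y\), that \(\scalar{e_x}{K^*(y,\cdot)}_{\ell_2(X)} = \overline{K^*(y,x)} = K(x,y)\), which together with the identity above gives \(\scalar{K_{\op}e_x}{e_y}_{\ell_2(X)} = \scalar{e_x}{(K^*)_{\op}e_y}_{\ell_2(X)}\). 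By bilinearity this extends to all \(u,v \in \ell_{fin}(X)\), so every \(e_y \in \DdD(K_{\op}^*)\); as \(\ell_{fin}(X)\) is dense, \(K_{\op}^*\) is densely defined and \(K_{\op}\) is therefore closable.

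Equivalently, and perhaps more transparently to write up, I could argue straight from the graph: since it lives in the metrizable space \(\ell_2(X) \times \ell_2(X)\), it suffices to check that if \(f_n \to 0\) in \(\ell_2(X)\) with \(f_n \in \ell_{fin}(X)\) and \(K_{\op}f_n \to g\), then \(g = 0\). For fixed \(y\) the functional \(u \mapsto \scalar{K_{\op}u}{e_y}_{\ell_2(X)}\) agrees on \(\ell_{fin}(X)\) with the bounded functional \(u \mapsto \scalar{u}{\overline{K(\cdot,y)}}_{\ell_2(X)}\), so \(g(y) = \lim_n \scalar{K_{\op}f_n}{e_y}_{\ell_2(X)} = \lim_n \scalar{f_n}{\overline{K(\cdot,y)}}_{\ell_2(X)} = 0\). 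I do not expect a genuine obstacle here; the one point I would keep in view is that closability really uses the square-summability of the \emph{columns} \(K(\cdot,y)\), not merely of the rows \(K(x,\cdot)\) needed to define \(K_{\op}\) in the first place, since it is exactly the former that makes the testing functionals \(\scalar{\cdot}{\overline{K(\cdot,y)}}_{\ell_2(X)}\) continuous on all of \(\ell_2(X)\).
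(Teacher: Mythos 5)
Your proposal is correct and is essentially the paper's own argument: the paper deduces closability precisely from the relation \(\scalar{K_{\op} f}{e_x}_{\ell_2(X)} = \scalar{f}{K^*(x,\cdot)}_{\ell_2(X)}\) for \(f \in \ell_{fin}(X)\), leaving the details to the reader---details you supply via the densely defined formal adjoint \((K^*)_{\op}\) (your sequential graph check is the same computation in disguise), and you correctly identify that it is the square-summability of the columns \(K(\cdot,y)\) that drives the argument. The only cosmetic slip is the word ``bilinearity'': the inner product is sesquilinear, though only linearity in the first argument is actually needed to place each \(e_y\) in \(\DdD(K_{\op}^*)\).
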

\begin{proof}
The only thing that needs proving is the closability of \(K_{\op}\). But this
easily follows from the relation: \(\scalar{K_{\op} f}{e_x}_{\ell_2(X)} =
\scalar{f}{K^*(x,\cdot)}_{\ell_2(X)}\ (x \in X,\ f \in \ell_{fin}(X))\).
The details are left to the reader.
\end{proof}

Every pd kernel \(K\) on \(X\) generates a unique Hilbert function space
(consisting of complex-valued functions on \(X\)), to be denoted by \(\hhH_K\),
such that the following two conditions are fulfilled:
\begin{itemize}
\item the functions \(K(x,\cdot)\) (where \(x\) runs over all elements of \(X\))
 belong to \(\hhH_K\);
\item \(\scalar{f}{K(x,\cdot)}_{\hhH_K} = f(x)\) for any \(x \in X\) and \(f \in
 \hhH_K\).
\end{itemize}
In particular, \(K(x,y) = \scalar{K(x,\cdot)}{K(y,\cdot)}_{\hhH_K}\). (Very
often \(\hhH_K\) is defined by conditions obtained from the above two by
replacing the functions \(K(x,\cdot)\) by \(K(\cdot,x)\). In general, this way
leads to a \underline{different} vector space. However, both the approaches are
fully equivalent and it is a matter of taste which one to choose. Our choice is
more convenient for our purposes.) A well-known fact says that \(\hhH_{\delta_X}
= \ell_2(X)\), which we will use many times without any additional
explanations. (Recall that \(\delta_X\) is the pd kernel on \(X\) such that
\(\delta_X(x,x) = 1\) for all \(x \in X\) and \(\delta_X(x,y) = 0\)
whenever \(x \neq y\).)\par
To simplify further statements, let us say that an operator \(T\dd \DdD(T) \to
H\) (where \(\ell_{fin}(X) \subset \DdD(T) \subset \ell_2(X)\) and \(H\) is
a Hilbert space) \textit{factorizes} a pd kernel \(K\) on \(X\) if
\begin{equation}\label{eqn:factor}
K(x,y) = \scalar{Te_x}{Te_y}_H \qquad (x, y \in H).
\end{equation}

\begin{lem}{forall}
For a pd kernel \(K\) on \(X\) \tfcae
\begin{enumerate}[\upshape(i)]
\item there exists a closable operator that factorizes \(K\);
\item the restriction to \(\ell_{fin}(X)\) of each operator that factorizes
 \(K\) is closable.
\end{enumerate}
\end{lem}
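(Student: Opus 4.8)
The plan is to use that any two operators factorizing the same kernel have identical Gram matrices on the canonical vectors (by \eqref{eqn:factor}), and are therefore intertwined by a single linear isometry. With this in hand the implication (i)\(\impliedby\)(ii) is almost immediate: since \(K\) is positive definite it always admits at least one factorization---for instance the operator \(T_0\dd\ell_{fin}(X)\to\hhH_K\) determined by \(T_0 e_x=K(x,\cdot)\), which factorizes \(K\) because the reproducing identity gives \(\scalar{K(x,\cdot)}{K(y,\cdot)}_{\hhH_K}=K(x,y)\). As \(\DdD(T_0)=\ell_{fin}(X)\), condition (ii) applied to \(T_0\) asserts precisely that \(T_0\) is closable, which is (i).

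For (i)\(\implies\)(ii) I would fix a closable operator \(T_1\dd\DdD(T_1)\to H_1\) that factorizes \(K\) and an arbitrary operator \(T_2\dd\DdD(T_2)\to H_2\) that also factorizes \(K\), and show that \(T_2\restriction{\ell_{fin}(X)}\) is closable. First, a restriction of a closable operator is closable (its graph sits inside a graph whose closure is again a graph), so \(S\df T_1\restriction{\ell_{fin}(X)}\) is closable. Comparing \eqref{eqn:factor} for \(T_1\) and \(T_2\) gives, for every finitely supported family of scalars,
\[
\Bigl\|\sum_i c_i\,T_1 e_{x_i}\Bigr\|_{H_1}^2=\sum_{i,j} c_i \bar c_j K(x_i,x_j)=\Bigl\|\sum_i c_i\,T_2 e_{x_i}\Bigr\|_{H_2}^2,
\]
so the assignment \(T_1 e_x\mapsto T_2 e_x\) is well defined and extends to a linear isometry \(U\) from \(\overline{\lin}\{T_1 e_x\dd\ x\in X\}\) onto \(\overline{\lin}\{T_2 e_x\dd\ x\in X\}\) satisfying \(U\circ S=T_2\restriction{\ell_{fin}(X)}\) by linearity.

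It then remains to transfer closability through \(U\), and this is the only substantive step: left composition with a bounded isometry preserves closability. Indeed, if \(f_n\to 0\) in \(\ell_2(X)\) and \(U S f_n\to h\) in \(H_2\), then \(U\) being isometric makes \((S f_n)\) Cauchy, so it converges to some \(g\) with \(Ug=h\); closability of \(S\) forces \(g=0\), whence \(h=Ug=0\). (Equivalently, \(\Gamma(U S)=(I\oplus U)\Gamma(S)\), and \(I\oplus U\) is a bounded isometry with closed range, hence carries \(\overline{\Gamma(S)}\) onto \(\overline{\Gamma(U S)}\).) As \(U S=T_2\restriction{\ell_{fin}(X)}\), this proves (ii). The one point to watch is that factorizing operators may have domains strictly larger than \(\ell_{fin}(X)\); this is why one must restrict to \(\ell_{fin}(X)\) before intertwining, and it is harmless precisely because restriction preserves closability.
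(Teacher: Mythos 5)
Your proposal is correct and follows essentially the same route as the paper: both directions use the operator \(K_{\op}\dd \ell_{fin}(X) \to \hhH_K\) as the canonical factorization for (ii)\(\implies\)(i), and for (i)\(\implies\)(ii) both compare Gram matrices via \eqref{eqn:factor} to produce a linear isometry intertwining the restrictions to \(\ell_{fin}(X)\) of any two factorizing operators. The only difference is cosmetic: you spell out the step that composition with an isometry preserves closability (and that restriction preserves closability), which the paper asserts with ``Consequently \dots'' and leaves to the reader.
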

\begin{proof}
It is easy to see that (i) is implied by (ii) (note that it is sufficient to
show the existence of an operator that factorizes \(K\)): the operator
\(K_{\op}\dd \ell_{fin}(X) \to \hhH_K\) factorizes \(K\). The reverse
implication is also simple: if \(S\dd \DdD(S) \to H\) and \(T\dd \DdD(T) \to E\)
factorize \(K\), then \(\scalar{Te_x}{Te_y}_E = \scalar{Se_x}{Se_y}_H\) for any
\(x, y \in X\) and therefore there exists a linear isometry \(V\dd
T(\ell_{fin}(X)) \to H\) such that \(Sf = VTf\) for any \(f \in \ell_{fin}(X)\).
Consequently, \(S\restriction{\ell_{fin}(X)}\) is closable iff
\(T\restriction{\ell_{fin}(X)}\) is so, and we are done.
\end{proof}

For a collection of kernels \(\{K_s\dd X_s \times X_s \to \CCC\}_{s \in S}\)
where the sets \(X_s\) are all pairwise disjoint we define the kernel
\(\bigoplus_{s \in S} K_s\) on the (disjoint) union \(\bigsqcup_{s \in S} X_s\)
of \(X_s\) as follows:
\begin{itemize}
\item \((\bigoplus_{s \in S} K_s)(x,y) = K_t(x,y)\) for \(x, y \in X_t\) and
 arbitrary \(t \in S\);
\item \((\bigoplus_{s \in S} K_s)(x,y) = 0\) if \(x \in X_p\), \(y \in X_q\) and
 \(p \neq q\).
\end{itemize}
It is easy to check and left to the reader that \(\bigoplus_{s \in S} K_s \gg
0\) iff \(K_s \gg 0\) for all \(s \in S\).\par
For simplicity, let us call a vector \(u \in \ell_{fin}(X)\) \textit{\(K\)-unit}
(where \(K\) is a pd kernel on \(X\)) if \(\sum_{x,y \in X} u(x)\overline{u(y)}
K(x,y) \leqsl 1\).\par
Now we gather various criteria for a pd kernel to have a pdms root.

\begin{thm}{exists}
For a pd kernel \(K\) on \(X\) \tfcae
\begin{itemize}
\item[(a1)] \(K\) has a pdms root;
\item[(a2)] there exists an \(\ell_2\)-kernel \(L\) on \(X\) such that \(K = L *
 L^*\);
\item[(b)] the set \(X\) admits a decomposition \(X = \bigsqcup_{s \in S} X_s\)
 into (pairwise disjoint) non-empty (at most) \textbf{countable} sets such that
 \(K = \bigoplus_{s \in S} K\restriction{X_s \times X_s}\) and each of
 \(K\restriction{X_s \times X_s}\) has a pdms root;
\item[(c1)] there exists a closable operator that factorizes \(K\);
\item[(c2)] there exists a positive self-adjoint operator \(B\) in \(\ell_2(X)\)
 that factorizes \(K\) and has \(\ell_{fin}(X)\) as a core;
\item[(c3)] \(K_{\op}\dd \ell_{fin}(X) \to \hhH_K\) is closable;
\item[(d)] \(\ell_2(X) \cap \hhH_K\) is dense in \(\hhH_K\);
\item[(e1)] whenever a sequence \((\alpha_n)_{n=1}^{\infty} \subset
 \ell_{fin}(X)\) norm converges to \(0\) and consists of \(K\)-unit vectors,
 then \(\lim_{n\to\infty} \sum_{x \in X} \alpha_n(x) K(x,z) = 0\) for all \(z
 \in X\);
\item[(e2)] for any \(z \in X\) and \(\epsi > 0\) there is a finite non-empty
 set \(F \subset X\) such that the following condition holds: if a \(K\)-unit
 vector \(f \in \ell_{fin}(X)\) vanishes at each point of \(F\) and has norm not
 exceedind \(1\), then \(|\sum_{x \in X} f(x) K(x,z)| \leqsl \epsi\);
\item[(e3)] for any \(z \in X\) and \(\epsi > 0\) there are a finite orthonormal
 system \(u_1,\ldots,u_k\) in \(\ell_2(X)\) and \(\delta > 0\) such that
 the following condition is fulfilled: if a \(K\)-unit vector \(v \in
 \ell_{fin}(X)\) satisfies \(|\scalar{v}{u_j}_{\ell_2(X)}| \leqsl \delta\) for
 \(j=1,\ldots,k\), then \(|\sum_{x \in X} v(x) K(x,z)| \leqsl \epsi\).
\end{itemize}
\end{thm}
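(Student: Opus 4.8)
The plan is to translate every condition into a statement about the single operator \(K_{\op}\dd \ell_{fin}(X)\to\hhH_K\) and then run one large cycle of implications. First I record the dictionary. Since \(K_{\op}e_x=K(x,\cdot)\) and \(\scalar{K(x,\cdot)}{K(y,\cdot)}_{\hhH_K}=K(x,y)\), the operator \(K_{\op}\dd\ell_{fin}(X)\to\hhH_K\) factorizes \(K\); moreover \(\|K_{\op}u\|_{\hhH_K}^2=\sum_{x,y}u(x)\overline{u(y)}K(x,y)\), so \(u\) is \(K\)-unit iff \(\|K_{\op}u\|_{\hhH_K}\leqsl1\), and \((K_{\op}u)(z)=\sum_{x}u(x)K(x,z)\) by the reproducing property. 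With this, \LEM{forall} immediately yields (c1)\(\iff\)(c3). The implication (a1)\(\implies\)(a2) is trivial (a pdms root \(R\) is its own \(L\), since \(R=R^*\)); and both (a1) and (a2) imply (c1), because if \(R\) is a pdms root (resp.\ \(K=L*L^*\)) then \(R_{\op}\) (resp.\ \(L_{\op}\)) factorizes \(K\) and is closable by \LEM{closable}.

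Next I would close the heart of the cycle. For (c3)\(\iff\)(d) I compute the adjoint of \(K_{\op}\dd\ell_{fin}(X)\subset\ell_2(X)\to\hhH_K\): testing against \(g\in\hhH_K\) gives \(\scalar{K_{\op}u}{g}_{\hhH_K}=\sum_x u(x)\overline{g(x)}\), so \(g\in\DdD(K_{\op}^*)\) exactly when \(g\in\ell_2(X)\); hence \(K_{\op}\) is closable iff \(\DdD(K_{\op}^*)=\hhH_K\cap\ell_2(X)\) is dense in \(\hhH_K\), which is (d). For (c3)\(\implies\)(c2) I take the polar decomposition \(\overline{K_{\op}}=Q\,|\overline{K_{\op}}|\) and put \(B\df|\overline{K_{\op}}|\); then \(B\) is positive self-adjoint in \(\ell_2(X)\), one has \(\scalar{Be_x}{Be_y}_{\ell_2(X)}=\scalar{\overline{K_{\op}}e_x}{\overline{K_{\op}}e_y}_{\hhH_K}=K(x,y)\), and \(\ell_{fin}(X)\), being a core for \(\overline{K_{\op}}\), is a core for \(B\) (the graph norms coincide). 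Finally (c2)\(\implies\)(a1): set \(R(x,y)\df\scalar{Be_x}{e_y}_{\ell_2(X)}\). Then \(R\gg0\) (because \(B\gg0\)), \(R(x,\cdot)=Be_x\in\ell_2(X)\) so \(R\) is an \(\ell_2\)-kernel, and \((R*R)(x,z)=\sum_y(Be_x)(y)\overline{(Be_z)(y)}=\scalar{Be_x}{Be_z}_{\ell_2(X)}=K(x,z)\); thus \(R\) is a pdms root of \(K\).

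To bring (b) into the cycle, (b)\(\implies\)(a1) is just the orthogonal direct sum of the block roots. For (c2)\(\implies\)(b) I would decompose \(\ell_2(X)\) into countable reducing blocks adapted to the canonical basis: the sets \(Y\subseteq X\) with \(\ell_2(Y)\) reducing \(B\) form a complete Boolean subalgebra of \(2^X\), and every \(x\) lies in a countable such \(Y\) (iterate \(x\mapsto\{y\dd e_y\not\perp H_x\}\), which is countable because the cyclic reducing subspace \(H_x\) generated by \(e_x\) is separable and each of its basis vectors has countable support in \(\ell_2(X)\)). The atoms \(X_s\) of this algebra then partition \(X\) into countable sets with each \(\ell_2(X_s)\) reducing \(B\); across distinct blocks \(Be_x\perp Be_y\), so \(K(x,y)=0\) and \(K=\bigoplus_s K\restriction{X_s\times X_s}\), while on each separable block \(B\restriction{\ell_2(X_s)}\) witnesses (c2)---hence, by what is already proven, (a1)---for \(K\restriction{X_s\times X_s}\).

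It remains to fold in (e1)--(e3), which I read as weak-continuity statements for the functionals \(u\mapsto(K_{\op}u)(z)\) on the set \(S\) of \(K\)-unit vectors. For (c3)\(\implies\)(e3) I apply \LEM{closed}(a): closability makes \(K_{\op}\restriction{S}\) weak-weak continuous, so each \(u\mapsto\scalar{K_{\op}u}{K(z,\cdot)}_{\hhH_K}=(K_{\op}u)(z)\) is weakly continuous at \(0\), which is exactly (e3) after Gram--Schmidt. Then (e3)\(\implies\)(e2) by truncating each \(u_j\in\ell_2(X)\) to a finite \(F\) with the tail small; (e2)\(\implies\)(e1) by replacing a norm-null \(K\)-unit sequence \(\alpha_n\) with its truncation off \(F\) (the error terms vanish); and (e1)\(\implies\)(c3) by rescaling, namely if \(\alpha_n\to0\) in \(\ell_2\) and \(K_{\op}\alpha_n\to w\), normalizing to \(K\)-unit vectors and invoking (e1) forces \(w(z)=0\) for every \(z\), i.e.\ \(w=0\). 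The main obstacle I anticipate is the non-separable step (c2)\(\implies\)(b)---obtaining a basis-adapted countable reducing decomposition---together with the careful bookkeeping in the \(e\)-cycle; by contrast, the polar-decomposition core of the argument is routine once the dictionary above is in place.
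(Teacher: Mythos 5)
Your proposal is correct and, for most of the cycle, runs exactly as the paper's proof does: the same dictionary via \(K_{\op}\), the same adjoint computation \(\DdD(K_{\op}^*) = \ell_2(X) \cap \hhH_K\) for (c3)\(\iff\)(d), the same polar-decomposition step producing \(B\) for (c2), the same kernel \(R(x,y) = \scalar{Be_x}{e_y}_{\ell_2(X)}\) for (c2)\(\implies\)(a1), and the same appeal to \LEM{closed} to drive the (e)-cycle. The genuine divergence is condition (b). The paper derives (b) from (a1) by a purely combinatorial argument: given a pdms root \(L\), declare \(x \sim y\) when some finite chain \(a_0 = x,\ldots,a_N = y\) satisfies \(L(a_{j-1},a_j) \neq 0\) for all \(j\); each equivalence class is countable simply because \(L(x,\cdot) \in \ell_2(X)\) forces \(\{y\dd\ L(x,y) \neq 0\}\) to be countable, and the classes are the blocks. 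You instead derive (b) from (c2) via basis-adapted reducing subspaces of \(B\): your argument is sound, also for unbounded \(B\)---the cyclic reducing subspace \(H_x\) is unitarily equivalent to \(L^2(\mu_x)\) for a finite Borel measure \(\mu_x\) on \(\RRR\), hence separable, so the closure iteration stabilizes in a countable set, and since \(P_{\ell_2(X_s)}\) commutes with \(B\), the projected core \(\ell_{fin}(X_s)\) is indeed a core for the restriction, so the blocks even witness (c2) rather than merely (c1). That is what your route buys (manifestly reducing blocks), at the cost of noticeably more machinery than the paper's one-line countability-of-supports argument. One step of yours needs a small patch: in (e2)\(\implies\)(e1) the truncation \(\gamma_n = \alpha_n - \beta_n\) off \(F\) need \emph{not} be \(K\)-unit, since one only gets \(\|K_{\op}\gamma_n\| \leqsl 1 + \|K_{\op}\beta_n\|\); the paper repairs this by applying (e2) to \(\tfrac12\gamma_n\) (which is eventually \(K\)-unit and of norm at most \(1\)) and absorbing the factor \(2\) into \(\epsi\), and your parenthetical ``the error terms vanish'' should be expanded to exactly this rescaling. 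With that one-line fix, the proposal is a complete proof.
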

\begin{proof}
For the reader's convenience, let us draw a scheme of the proof:
(a1)\(\implies\)(c1)\(\implies\)(c2)\(\implies\)(a1);
(a1)\(\implies\)(a2)\(\implies\)(c1); (c1)\(\iff\)(c3)\(\iff\)(d);
(c3)\(\implies\)(e3)\(\implies\)(e2)\(\implies\)(e1)\(\implies\)(c3) and finally
(a1)\(\implies\)(b)\(\implies\)(a1).\par
First assume \(L \gg 0\) satisfies \(K = L * L\) (see (a1)). This means that
\[L_{\op}\dd \ell_{fin}(X) \to \ell_2(X)\] factorizes \(K\). It follows from
\LEM{closable} that \(L_{\op}\) is closable (recall that \(L\) is an
\(\ell_2\)-kernel). So, (c1) holds. Let us now check that (c2) is implied by
(c1). To this end, let \(T\dd \ell_{fin}(X) \to H\) be closable and factorize
\(K\), and let \(\bar{T} = Q B\) be the polar decomposition of \(\bar{T}\). Then
\(B\) is a positive self-adjoint operator whose domain contains
\(\ell_{fin}(X)\) and \(Q\) is isometric on \(\overline{\RrR}(B)\). So, \(K(x,y)
= \scalar{Te_x}{Te_y}_H = \scalar{QBe_x}{QBe_y}_H =
\scalar{Be_x}{Be_y}_{\ell_2(X)}\). Moreover, since \(\ell_{fin}(X)\) is a core
of \(\bar{T}\), it is a core of \(B\) as well---that is, (c2) holds. Now we will
show that (a1) follows from (c2). So, let \(B\) be as specified in (c2) and
define \(L\dd X \times X \to \CCC\) by \(L(x,y) =
\scalar{Be_x}{e_y}_{\ell_2(X)}\ (x, y \in X)\). Since \(B\) is positive, it is
readily seen that \(L \gg 0\). Moreover, observe that \(B e_x = \sum_{y \in X}
L(x,y) e_y\) and therefore \(L\) is an \(\ell_2\)-kernel and
\(\scalar{L(x,\cdot)}{L(y,\cdot)}_{\ell_2(X)} = \scalar{Be_x}{Be_y}_{\ell_2(X)}
= K(x,y)\) and (a1) is fulfilled.\par
Of course, (a1) is followed by (a2). Conversely, if \(L\) is as specified in
(a2), then \(L_{\op}\dd \ell_{fin}(X) \to \ell_2(X)\) is closable (by
\LEM{closable}) and factorizes \(K\), which shows (c1).\par
Further, if (c1) holds, then \(K_{\op}\dd \ell_{fin}(X) \to \hhH_K\) is closable
as it factorizes \(K\) (see \LEM{forall} and its proof), which shows that (c1)
implies (c3). The reverse implication is trivial. To show that (c3) is
equivalent to (d), consider \(T = K_{\op}\dd \ell_{fin}(X) \to \hhH_K\) and
recall that (since \(T\) is densely defined) \(T\) is closable iff \(T^*\) is
densely defined. Therefore it is sufficient to check that \(\DdD(T^*) =
\ell_2(X) \cap \hhH_K\). To this end, observe that \(g \in \hhH_K\) belongs to
\(\DdD(T^*)\) and \(T^*g = f \in \ell_2(X)\) iff \(\scalar{Te_x}{g}_{\hhH_K} =
\overline{f(x)}\) for all \(x \in X\). Equivalently, we need to have \(f(x) =
\scalar{g}{K(x,\cdot)}_{\hhH_K} = g(x)\). So, \(g \in \DdD(T^*)\) iff \(g \in
\ell_2(X) \cap \hhH_K\) (and then \(T^*g = g\)), which finishes this part of
the proof.\par
Going further, for simplicity, we denote by \(S \subset \ell_{fin}(X)\) the set
of all \(K\)-unit vectors, and by \(T\) the operator \(K_{\op}\dd \ell_2(X) \to
\hhH_K\). Observe that for any \(u \in \ell_{fin}(X)\),
\[u \in S \iff \|Tu\| \leqsl 1\]
and \(\sum_{x \in X} u(x) K(x,y) = \scalar{Tu}{K(y,\cdot)}_{\hhH_K}\) (for any
\(y \in X)\). Now if (c3) holds, it follows from \LEM{closed} that (e3) is
fulfilled. Indeed, a note that all sets of the form
\begin{equation}\label{eqn:aux22}
\{v \in \ell_{fin}(X)\dd\ |\scalar{v}{u_j}| < \delta\ (j=1,\ldots,k)\}
\end{equation}
(where \(\delta > 0\) and \(u_1,\ldots,u_k\) is a finite orthonormal system in
\(\ell_2(X)\)) form a neighbourhood basis of \(0\) in the weak topology of
\(\ell_2(X)\) enables one deducing (e3) from (c3) (we leave the details to
the reader). Further, if (e3) holds, then (e2) holds as well (for in (e2)
\(K\)-unit vectors are taken from the unit ball of \(\ell_{fin}(X)\) and on
the unit ball of \(\ell_2(X)\) a neighbourhood basis of \(0\) in the weak
topology can be formed by the sets \eqref{eqn:aux22} where all vectors
\(u_1,\ldots,u_k\) are taken from the canonical basis of \(\ell_2(X)\)). Let us
now give a more detailed proof that (e1) follows from (e2). To this end, assume
(e2) holds and let a sequence \((\alpha_n)_{n=1}^{\infty}\) be as specified in
(e1). Fix \(z \in X\) and \(\epsi > 0\). Choose a finite set \(F \subset X\)
guaranteed by (e2) (for these \(z\) and \(\epsi > 0\)). Denote by \(E\)
the linear span of all \(e_x\) with \(x \in F\) and write \(\alpha_n = \beta_n
+ \gamma_n\) where \(\beta_n \in E\) and \(\gamma_n \perp E\). Then
\(\lim_{n\to\infty} \beta_n = \lim_{n\to\infty} \gamma_n = 0\). Since \(E\) is
finite-dimensional,
\begin{equation}\label{eqn:aux23}
\lim_{n\to\infty} T\beta_n = 0.
\end{equation}
Therefore \(\beta_n \in S\) for sufficiently large \(n\). Consequently,
\(\frac12 \gamma_n = \frac12(\alpha_n-\beta_n)\) eventually belongs to \(S\) as
well. Since \(\gamma_n\) vanishes at each point of \(F\), we infer from (e2)
that \(|\scalar{T\gamma_n}{K(z,\cdot)}_{\hhH_K}| \leqsl 2\epsi\) for
sufficiently large \(n\). This inequality combined with \eqref{eqn:aux23} yields
\(|\scalar{T\alpha_n}{K(z,\cdot)}_{\hhH_K}| \leqsl 3\epsi\) (for sufficiently
large \(n\)) and hence (e1) is fulfilled. Finally, assume (e1) holds. Our aim is
to show that (c3) is fulfilled; that is, that \(T\) (defined above) is closable.
To this end, assume \((\alpha_n)_{n=1}^{\infty} \subset \ell_{fin}(X)\) norm
converges to \(0\) and \(T\alpha_n \to \beta \in \hhH_K\ (n \to \infty)\). We
need to check that \(\beta = 0\). Without loss of generality, we may and do
assume that \(\|T \alpha_n\| \leqsl 1\). So, \(\alpha_n \in S\) and it follows
from (e1) that \(\lim_{n\to\infty} \scalar{T\alpha_n}{K(z,\cdot)}_{\hhH_K} = 0\)
for any \(z \in X\), from which it easily follows that \(\beta = 0\).\par
It remains to check that (b) is equivalent to (a1). First assume (a1) holds and
choose any pd \(\ell_2\)-kernel \(L\) such that \(K = L * L\). Define
an equivalence relation ``\(\sim\)'' on \(X\) as follows: \(x \sim y\) if there
are points \(a_0,\ldots,a_N \in X\) (for some \(N > 0\)) such that \(a_0 = x\),
\(a_N = y\) and \(L(x_{j-1},x_j) \neq 0\) for \(j=1,\ldots,N\). Observe that
all equivalence classes \([x]_{\sim}\) are at most countable (because the set
\(\{x \in X\dd\ L(x,y) \neq 0\}\) is such for any \(y \in X\)). So, we can
divide \(X\) into pairwise disjoint sets \(X_s\) such that \(L(x,y) = 0\) for
any \(x \in X_s\) and \(y \in X_{s'}\) with distinct \(s, s' \in S\) (namely,
\(\{X_s\dd\ s \in S\} = \{[x]_{\sim}\dd\ x \in X\}\)). For simplicity, set \(L_s
\df L\restriction{X_s \times X_s}\) and note that \(L = \bigoplus_{s \in S}
L_s\) and each of \(L_s\) is a pd \(\ell_2\)-kernel. It is then easy to verify
that \(L * L = \bigoplus_{s \in S} (L_s * L_s)\) and therefore (b) holds.
Conversely, if (b) is satisfied, then for each \(s \in S\) we can choose a pd
\(\ell_2\)-kernel \(L_s\) on \(X_s\) such that \(K\restriction{X_s \times X_s} =
L_s * L_s\). Then it suffices to set \(L \df \bigoplus_{s \in S} L_s\) to get
a pd \(\ell_2\)-kernel such that \(K = L *L\).
\end{proof}

In \PRO{roots} below we will show that the operator \(B\) witnessing the above
condition (c2) is unique.\par
In the next theorem we will make use of the following two results. The former
of them is well-known (see, e.g., Theorem~6 on page 37 in \cite{sai}) and it is
likely that so is the latter, but we could not find it in the literature and
thus we give its proof.

\begin{lem}{classic}
For two pd kernels \(K\) and \(L\) on a common set and a constant \(c \geqsl 0\)
\tfcae
\begin{enumerate}[\upshape(i)]
\item \(\hhH_K \subset \hhH_L\) and the identity operator from \(\hhH_K\) into
 \(\hhH_L\) has norm not exceeding \(c\);
\item \(K \ll c^2 L\).
\end{enumerate}
Moreover, \(\hhH_K \subset \hhH_L\) iff \textup{(ii)} hols for some \(c > 0\).
\end{lem}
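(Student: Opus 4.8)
The plan is to prove the equivalence by associating to each of the two conditions a bounded operator between the reproducing kernel spaces $\hhH_K$ and $\hhH_L$, and to translate the operator-norm bound into the quadratic-form inequality that defines $K \ll c^2 L$. The whole argument rests on applying the reproducing property \emph{in both spaces simultaneously}.

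First I would handle ``(i)\(\implies\)(ii)''. Assuming $\hhH_K \subset \hhH_L$ with the inclusion $\iota\dd \hhH_K \to \hhH_L$ of norm at most $c$, the crucial observation is that its Hilbert-space adjoint $\iota^*\dd \hhH_L \to \hhH_K$ carries $L(x,\cdot)$ to $K(x,\cdot)$: for every $f \in \hhH_K$ the two reproducing properties give $\scalar{f}{\iota^* L(x,\cdot)}_{\hhH_K} = \scalar{\iota f}{L(x,\cdot)}_{\hhH_L} = f(x) = \scalar{f}{K(x,\cdot)}_{\hhH_K}$. Evaluating $\|\iota^* w\|_{\hhH_K} \leqsl c\|w\|_{\hhH_L}$ on a finite combination $w = \sum_j \mu_j L(x_j,\cdot)$ and expanding both norms through the reproducing kernels then yields $\sum_{j,k} \mu_j \bar{\mu}_k K(x_j,x_k) \leqsl c^2 \sum_{j,k} \mu_j \bar{\mu}_k L(x_j,x_k)$, i.e.\ $c^2 L - K \gg 0$, which is (ii).

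For the converse ``(ii)\(\implies\)(i)'' I would reverse this reasoning. Starting from $K \ll c^2 L$, I would define $R$ on the dense subspace $\lin\{L(x,\cdot)\dd x \in X\}$ of $\hhH_L$ by $R L(x,\cdot) \df K(x,\cdot)$; the very inequality from (ii) shows that $\|R w\|_{\hhH_K}^2 = \sum_{j,k}\mu_j\bar{\mu}_k K(x_j,x_k) \leqsl c^2 \|w\|_{\hhH_L}^2$ on finite combinations, so $R$ is well defined (in particular it annihilates null vectors) and extends to a bounded operator $R\dd \hhH_L \to \hhH_K$ with $\|R\| \leqsl c$. The decisive step is to identify its adjoint on functions: for $g \in \hhH_K$ and $x \in X$ the reproducing properties give $(R^* g)(x) = \scalar{R^* g}{L(x,\cdot)}_{\hhH_L} = \scalar{g}{R L(x,\cdot)}_{\hhH_K} = \scalar{g}{K(x,\cdot)}_{\hhH_K} = g(x)$, so $R^* g$ \emph{coincides with $g$ as a function}. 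Hence every $g \in \hhH_K$ already belongs to $\hhH_L$, and $\|g\|_{\hhH_L} = \|R^* g\|_{\hhH_L} \leqsl c\|g\|_{\hhH_K}$, which is exactly (i).

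For the final (``Moreover'') clause it remains to show that a mere set-theoretic inclusion $\hhH_K \subset \hhH_L$ is automatically bounded; then (i) holds with $c$ equal to its norm (or anything larger and positive) and ``(i)\(\implies\)(ii)'' supplies (ii), while the reverse direction is immediate from the equivalence just proved. Boundedness follows from the closed graph theorem: point evaluations are continuous on both spaces, so if $f_n \to f$ in $\hhH_K$ and $f_n \to h$ in $\hhH_L$ then $f = h$ pointwise, whence the graph of the inclusion is closed. I expect no real obstacle here; the only point demanding care throughout is the placement of complex conjugates in the quadratic forms, forced by the convention $K(x,y) = \scalar{K(x,\cdot)}{K(y,\cdot)}_{\hhH_K}$, but this never affects the non-negativity of the forms involved.
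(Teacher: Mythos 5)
Your proof is correct and complete: the identification \(\iota^* L(x,\cdot) = K(x,\cdot)\) gives (i)\(\implies\)(ii), the contraction \(R\) with \(R^*\) acting as the inclusion gives the converse, and the closed graph theorem (via continuity of point evaluations) settles the ``Moreover'' clause. Note that the paper offers no proof of this lemma at all---it is quoted as well-known with a reference to Theorem~6 on page~37 of Saitoh's book---and your argument is exactly the standard one that citation points to, so there is nothing in the paper to diverge from.
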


\begin{lem}{incr}
Let \(\{K_{\sigma}\}_{\sigma\in\Sigma}\) be an increasing net of pd kernels on
\(X\), that is:
\begin{itemize}
\item \((\Sigma,\leqsl)\) is a directed set;
\item \(K_{\sigma} \ll K_{\tau}\) for any \(\sigma, \tau \in \Sigma\) with
 \(\sigma \leqsl \tau\).
\end{itemize}
If \(K\dd X \times X \to \CCC\) is the pointwise limit of this net---that is, if
\[K(x,y) = \lim_{\sigma\in\Sigma} K_{\sigma}(x,y) \qquad (x, y \in X),\] then
\(\bigcup_{\sigma\in\Sigma} \hhH_{K_{\sigma}}\) is a dense linear subspace of
\(\hhH_K\).
\end{lem}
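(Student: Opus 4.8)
The plan is to use \LEM{classic} to place every \(\hhH_{K_\sigma}\) inside \(\hhH_K\) with contractive inclusion, and then to show that the kernel sections \(K(x,\cdot)\)---which generate \(\hhH_K\)---are approximated in the \(\hhH_K\)-norm by the sections \(K_\sigma(x,\cdot)\), each of which lies in \(\hhH_{K_\sigma}\).

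First I would record that a pointwise limit of pd kernels is pd, since the non-negative real sums in \eqref{eqn:pd} pass to the limit. Applying this to the net \((K_\tau - K_\sigma)_{\tau \geqsl \sigma}\) of pd kernels gives \(K - K_\sigma \gg 0\), i.e.\ \(K_\sigma \ll K\) for every \(\sigma\). Hence \LEM{classic} (with \(c = 1\)) yields \(\hhH_{K_\sigma} \subset \hhH_K\) with inclusion norm at most \(1\); in particular \(\bigcup_{\sigma} \hhH_{K_\sigma} \subset \hhH_K\). The same lemma applied to pairs \(\sigma \leqsl \tau\) shows \(\hhH_{K_\sigma} \subset \hhH_{K_\tau}\), so the family is directed by inclusion. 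Together with directedness of \((\Sigma,\leqsl)\) this makes the union a linear subspace: given \(f \in \hhH_{K_\sigma}\) and \(g \in \hhH_{K_\tau}\), choose \(\rho \geqsl \sigma, \tau\), so that \(f, g \in \hhH_{K_\rho}\) and \(f + g \in \hhH_{K_\rho}\).

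For density it suffices to approximate each \(K(x,\cdot)\), because \(\overline{\lin}(\{K(x,\cdot)\dd\ x \in X\}) = \hhH_K\) (any \(f\) orthogonal to all \(K(x,\cdot)\) satisfies \(f(x) = \scalar{f}{K(x,\cdot)}_{\hhH_K} = 0\)). I would take \(K_\sigma(x,\cdot) \in \hhH_{K_\sigma}\) as approximants and estimate the \(\hhH_K\)-distance. Two observations drive the computation: since \(K_\sigma(x,\cdot)\) is a genuine element of \(\hhH_K\), the reproducing property of \(K\) gives \(\scalar{K_\sigma(x,\cdot)}{K(x,\cdot)}_{\hhH_K} = K_\sigma(x,x)\); and contractivity of the inclusion gives \(\|K_\sigma(x,\cdot)\|_{\hhH_K} \leqsl \|K_\sigma(x,\cdot)\|_{\hhH_{K_\sigma}} = \sqrt{K_\sigma(x,x)}\). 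Expanding the square then yields
\[\|K(x,\cdot) - K_\sigma(x,\cdot)\|_{\hhH_K}^2 = K(x,x) - 2 K_\sigma(x,x) + \|K_\sigma(x,\cdot)\|_{\hhH_K}^2 \leqsl K(x,x) - K_\sigma(x,x),\]
which tends to \(0\) because \(K_\sigma(x,x) \to K(x,x)\). Thus \(K_\sigma(x,\cdot) \to K(x,\cdot)\) in \(\hhH_K\), and density follows.

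The only delicate point, and the sole place requiring care, is the interplay of the two norms: one must keep \(K_\sigma(x,\cdot)\) measured in the smaller \(\hhH_K\)-norm (bounded, via contractivity, by its exact \(\hhH_{K_\sigma}\)-norm \(\sqrt{K_\sigma(x,x)}\)) while evaluating the cross term with the reproducing kernel \(K\) of the large space rather than with \(K_\sigma\). Once these are separated, the estimate collapses to the bare pointwise convergence \(K_\sigma(x,x) \to K(x,x)\), so no uniformity or additional monotone-convergence machinery is needed.
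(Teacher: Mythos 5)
Your proof is correct, and the density step takes a genuinely different route from the paper's. Both arguments begin identically: pass to the limit in \(K_\sigma \ll K_\tau\) to get \(K_\sigma \ll K\), invoke \LEM{classic} for the contractive inclusions \(\hhH_{K_\sigma} \subset \hhH_{K_\tau} \subset \hhH_K\), and use directedness of \(\Sigma\) to see the union is a linear subspace. But where you prove \emph{norm} convergence \(K_\sigma(x,\cdot) \to K(x,\cdot)\) in \(\hhH_K\) via the explicit estimate \(\|K(x,\cdot) - K_\sigma(x,\cdot)\|_{\hhH_K}^2 \leqsl K(x,x) - K_\sigma(x,x)\), the paper settles for \emph{weak} convergence: it observes that the net \(\{K_\sigma(x,\cdot)\}\) is bounded in \(\hhH_K\) (by the same contractivity bound \(\|K_\sigma(x,\cdot)\|_{\hhH_K}^2 \leqsl K_\sigma(x,x) \leqsl K(x,x)\) that you use), checks convergence of inner products against the total family \(\{K(z,\cdot)\dd\ z \in X\}\), and then concludes that \(K(x,\cdot)\) lies in the weak closure of the union, which coincides with the norm closure since the union is convex. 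Your computation---keeping \(\|K_\sigma(x,\cdot)\|_{\hhH_K}\) controlled by its exact \(\hhH_{K_\sigma}\)-norm \(\sqrt{K_\sigma(x,x)}\) while evaluating the cross term by the reproducing property of \(K\) (legitimate, since \(K_\sigma(x,\cdot)\) is a genuine element of \(\hhH_K\), and the cross term \(K_\sigma(x,x)\) is real and non-negative)---is exactly the right disentangling of the two norms, and it buys something the paper's argument does not: a quantitative rate and strong convergence of the kernel sections, with no appeal to weak compactness, total sets, or the convexity/weak-closure principle. The paper's softer argument is marginally shorter to state but strictly weaker in its intermediate conclusion; yours is the more elementary and more informative proof.
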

\begin{proof}
For simplicity and to avoid confusion, for any \(\sigma \in \Sigma\) we denote
by \(\scalarr_{\sigma}\) and \(\|\cdot\|_{\sigma}\) the scalar product and,
respectively, the norm of \(\hhH_{K_{\sigma}}\); whereas \(\scalarr\) and
\(\|\cdot\|\) stands for the scalar product and the norm of \(\hhH_K\).
Additionally, we set \(H \df \bigcup_{\sigma\in\Sigma} \hhH_{K_{\sigma}}\).\par
Since \(K_{\sigma} \ll K_{\tau}\) whenever \(\sigma \leqsl \tau\), after passing
with \(\tau\) to the limit, we easily get \(K_{\sigma} \ll K\). It follows from
\LEM{classic} that for any \(\sigma, \tau \in \Sigma\):
\begin{itemize}
\item \(\hhH_{K_{\sigma}} \subset \hhH_{K_{\tau}}\) whenever \(\sigma \leqsl
 \tau\); consequently, since \(\Sigma\) is directed, \(H\) is a linear space;
\item \(\hhH_{K_{\sigma}} \subset \hhH_K\) and \(\|I_{\sigma}\| \leqsl 1\) where
 \(I_{\sigma}\dd \hhH_{K_{\sigma}} \to \hhH_K\) is the identity operator.
\end{itemize}
Fix \(x \in X\). Then \[\|K_{\sigma}(x,\cdot)\|^2 \leqsl
\|K_{\sigma}(x,\cdot)\|_{\sigma}^2 =
\scalar{K_{\sigma}(x,\cdot)}{K_{\sigma}(x,\cdot)}_{\sigma} = K_{\sigma}(x,x)
\leqsl K(x,x).\]
So, the net \(\{K_{\sigma}(x,\cdot)\}_{\sigma\in\Sigma} \subset H\) is bounded
in \(\hhH_K\). Moreover, for any \(z \in X\) we have \[\lim_{\sigma\in\Sigma}
\scalar{K_{\sigma}(x,\cdot)}{K(z,\cdot)} = \lim_{\sigma\in\Sigma}K_{\sigma}(x,z)
= K(x,z) = \scalar{K(x,\cdot)}{K(z,\cdot)}.\] Since the functions \(K(z,\cdot)\
(z \in X)\) form a total subset of \(\hhH_K\), it follows from the above
convergence and the boundedness of the net under consideration that
\(K_{\sigma}(x,\cdot)\) weakly converge to \(K(x,\cdot)\). Therefore
\(K(x,\cdot)\) belongs to the weak closure of \(H\) which coincides with
the norm closure of \(H\). Consequently, \(\hhH_K =
\overline{\lin}\{K(x,\cdot)\dd\ x \in X\}\) is contained in the (norm) closure
of \(H\) and we are done.
\end{proof}

As a consequence of \THM{exists}, we obtain the following

\begin{thm}{pdms}
Let \(K\) be a pd kernel on \(X\).
\begin{enumerate}[\upshape(I)]
\item If \(K\) has a pdms root and \(A\) is a non-empty subset of \(X\), then
 \(K\restriction{A \times A}\) has a pdms root as well.
\item If \(K = \bigoplus_{s \in S} K_s\), then \(K\) has a pdms root iff each of
 \(K_s\) has a pdms root.
\item If \(K\) has a pdms root and \(u\dd X \to \CCC\) is a bounded function,
 then the kernel \(L\dd X \times X \ni (x,y) \mapsto \overline{u(x)} u(y) K(x,y)
 \in \CCC\) has a pdms root as well. In particular, the kernel
 \begin{equation}\label{eqn:bdd}
 K_{\OPN{bd}}\dd X \times X \ni (x,y) \mapsto
 \frac{K(x,y)}{\sqrt{\max(K(x,x),1) \cdot \max(K(y,y),1)}} \in \CCC
 \end{equation}
 has a pdms root provided so has \(K\).
\item If \(K\) is bounded and has a pdms root, then for any \(x \in X\),
 \(K(x,\cdot)\) is a \(c_0\)-function; that is, for any \(x \in X\) and \(\epsi
 > 0\) there is a finite set \(F \subset X\) such that \(|K(x,y)| < \epsi\) for
 any \(y \notin F\).
\item If \(K\) is an \(\ell_2\)-kernel, it has a pdms root. In particular, each
 pdms root of a pd kernel has a pdms root.
\item If \(K = \sum_{s \in S} K_s\) where \(\{K_s\}_{s \in S}\) is an arbitrary
 family of pd kernels having pdms roots, then \(K\) also has a pdms root.
\item If \(K\) is the pointwise limit of an increasing net of pd kernels (cf.\
 the statement of \LEM{incr}) each of which has a pdms root, then also \(K\) has
 a pdms root.
\item If \(L\) is a pd kernel on \(X\) such that \(c_1 K \ll L \ll c_2 K\) for
 some positive constants \(c_1\) and \(c_2\), then either both \(K\) and \(L\)
 have pdms roots or none of them.
\item There exists a unique pd kernel \(K_0 \ll K\) that has a pdms root and is
 the greatest kernel with these properties; that is, if \(L \ll K\) is a pd
 kernel that has a pdms root, then \(L \ll K_0\).
\item There exists a pd kernel \(K_1 \ll K\) that has a \textbf{unique} pdms
 root and the following property: whenever \(L \gg 0\) is such that \(L \ll a
 K\) for some constant \(a > 0\), then \(L\) has a unique pdms root iff \(L \ll
 b K_1\) for some constant \(b > 0\).
\end{enumerate}
\end{thm}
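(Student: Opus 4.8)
The plan is to recast the entire statement in the language of reproducing kernel Hilbert space inclusions and then exhibit \(K_1\) as the reproducing kernel of the intersection \(\hhH_K \cap \ell_2(X)\). The translation rests on two facts already available. By the ``moreover'' part of \LEM{classic}, for pd kernels \(M\) and \(N\) one has \(\hhH_M \subset \hhH_N\) if and only if \(M \ll c N\) for some \(c > 0\); thus the clause ``\(L \ll b K_1\) for some \(b > 0\)'' means precisely \(\hhH_L \subset \hhH_{K_1}\), and the hypothesis ``\(L \ll a K\) for some \(a > 0\)'' means \(\hhH_L \subset \hhH_K\). Moreover, combining \THM{uniq} with \LEM{classic} (applied to \(N = \delta_X\), recalling \(\hhH_{\delta_X} = \ell_2(X)\)), a pd kernel \(L\) has a unique pdms root exactly when \(\hhH_L \subset \ell_2(X)\). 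Under this dictionary the assertion to be proved reads: there is a pd kernel \(K_1\) with \(\hhH_{K_1} = \hhH_K \cap \ell_2(X)\) and \(\hhH_{K_1} \subset \hhH_K\).

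To produce such a \(K_1\), I would equip the set \(\hhH_K \cap \ell_2(X)\) with the inner product \(\scalar{f}{g}_1 \df \scalar{f}{g}_{\hhH_K} + \scalar{f}{g}_{\ell_2(X)}\) and check that the resulting space is a reproducing kernel Hilbert space; its reproducing kernel is then the desired \(K_1\). Completeness follows because a \(\|\cdot\|_1\)-Cauchy sequence is Cauchy in both \(\hhH_K\) and \(\ell_2(X)\), hence converges in each to some \(g\) and \(h\); boundedness of point evaluations in both spaces forces \(g = h\) pointwise, so the common limit lies in \(\hhH_K \cap \ell_2(X)\) and is the \(\|\cdot\|_1\)-limit. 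Point evaluations are bounded since \(|f(x)| \leqsl \|f\|_{\hhH_K} \sqrt{K(x,x)} \leqsl \|f\|_1 \sqrt{K(x,x)}\). (This is the Aronszajn intersection fact; since I do not find it cited here, I would include this short argument.) Should \(\hhH_K \cap \ell_2(X) = \{0\}\), one simply takes \(K_1 = 0\), and the equivalence below degenerates correctly.

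It then remains to verify the required properties. From \(\|f\|_{\hhH_K} \leqsl \|f\|_1\) and \(\|f\|_{\ell_2(X)} \leqsl \|f\|_1\), the inclusion operators \(\hhH_{K_1} \hookrightarrow \hhH_K\) and \(\hhH_{K_1} \hookrightarrow \ell_2(X)\) have norm at most \(1\), so \LEM{classic} yields \(K_1 \ll K\) and \(K_1 \ll \delta_X\); by \THM{uniq} the latter says \(K_1\) has a unique pdms root. Finally, for \(L \gg 0\) with \(\hhH_L \subset \hhH_K\) one has the chain of equivalences: \(L\) has a unique pdms root \(\iff \hhH_L \subset \ell_2(X) \iff \hhH_L \subset \hhH_K \cap \ell_2(X) = \hhH_{K_1} \iff L \ll b K_1\) for some \(b > 0\), where the first and last equivalences use \THM{uniq} and \LEM{classic} as above and the middle one uses \(\hhH_L \subset \hhH_K\).

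The one genuinely substantive step is the middle paragraph---verifying that the intersection \(\hhH_K \cap \ell_2(X)\) is itself a reproducing kernel Hilbert space, which is what manufactures the kernel \(K_1\) in the first place. Everything else is a mechanical application of \LEM{classic} and \THM{uniq} once the problem has been transported into the containment preorder on reproducing kernels, where ``\(\ll\) up to a positive constant'' becomes plain set-theoretic inclusion of the associated Hilbert function spaces.
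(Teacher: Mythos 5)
Your proposal proves only item (X) of the theorem. The statement you were asked to establish is the full ten-part result (I)--(X), and items (I)--(IX) are nowhere addressed; this is a genuine and substantial gap. Moreover, the dictionary you set up cannot produce them: by \THM{uniq} and \LEM{classic}, \emph{unique} existence of a pdms root is indeed equivalent to the inclusion \(\hhH_L \subset \ell_2(X)\), but mere existence of a pdms root is a weaker property, characterized in \THM{exists} not by an inclusion of reproducing kernel Hilbert spaces but by \emph{density} of \(\ell_2(X) \cap \hhH_K\) in \(\hhH_K\) (condition (d)), or equivalently by closability of an operator factorizing \(K\) (condition (c1)). So the containment preorder alone does not decide whether a kernel has \emph{some} pdms root, and that is precisely what items (I)--(IX) are about. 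The paper proves (I) from (c1) (restrictions of closable operators are closable), (II) from (I) and condition (b) of \THM{exists}, (III) by composing a closable factorization with the bounded diagonal operator induced by \(u\), (IV) from condition (e2), (V) from density in (d), (VI) by assembling a closable direct-sum factorization \(T = \oplus_{s \in S} T_s\), (VII) from \LEM{incr} together with (d), (VIII) from \LEM{classic} plus (d), and (IX) by projecting the functions \(K(x,\cdot)\) onto the closure of \(\ell_2(X) \cap \hhH_K\) in \(\hhH_K\). None of these arguments, nor substitutes for them, appear in your proposal.

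For item (X) itself your argument is correct and is essentially the paper's own: the paper also equips \(H \df \ell_2(X) \cap \hhH_K\) with the sum inner product, invokes (as ``a kind of folklore'') that \(H\) is a reproducing kernel Hilbert space, and then applies \LEM{classic} and \THM{uniq} exactly as you do. Two small points in your favor: you supply the short completeness and point-evaluation argument that the paper leaves implicit, and your normalization is marginally cleaner---since both inclusion maps have norm at most \(1\), \LEM{classic} gives \(K_1 \ll K\) and \(K_1 \ll \delta_X\) directly, whereas the paper rescales by a generic constant \(c\). Note also that the paper flags the forward reference to \THM{uniq} here and checks its proof does not circularly depend on this item; your write-up should do the same. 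But as a proof of the stated theorem, the proposal covers one tenth of it.
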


Before giving a proof, we explain how to understand the sum appearing in item
(VI) above and when this (generalized) series converges. The formula \(K =
\sum_{s \in S} K_s\) is understood pointwise: we only assume that \(K(x,y) =
\sum_{s \in S} K_s(x,y)\) for any \(x, y \in X\). In particular,
\begin{equation}\label{eqn:aux26}
\sum_{s \in S} K_s(x,x) < \infty \qquad (x \in X)
\end{equation}
(all summands in \eqref{eqn:aux26} are non-negative and hence the series is
well-defined as a quantity in \([0,\infty]\)). Conversely, if \eqref{eqn:aux26}
is fulfilled, then \(\sum_{s \in S} K_s(x,y)\) is absolutely convergent (for
any \(x, y \in X\)):
\[\sum_{s \in S} |K_s(x,y)| \leqsl \sum_{s \in S} \sqrt{K_s(x,x) K_s(y,y)}
\leqsl \sum_{s \in S} (K_s(x,x) + K_s(y,y)) < \infty\]
where the first inequality above follows from the property that the matrix
\[\begin{pmatrix}K(x,x) & K(x,y)\\K(y,x) & K(y,y)\end{pmatrix}\] is positive.
\begin{proof}[Proof of \THM{pdms}]
Since the restriction of a closable operator is closable as well, item (I)
immediately follows from condition (c1) in \THM{exists}, whereas (II) is
a consequence of (I) and of item (b) therein.\par
To prove that \(L\) defined in (III) has a pdms root, we use condition (c1) of
\THM{exists}. Since \(K\) has a pdms kernel, there is a closable operator \(T\dd
\ell_{fin}(X) \to H\) that factorizes \(K\). Let \(S\dd \ell_{fin}(X) \to
\ell_{fin}(X)\) be given by \(Se_x \df u(x) e_x\). Then \(S\) is bounded (since
so is \(u\)) and hence the operator \(TS\dd \ell_{fin}(X) \to H\) is closable.
Observe that \(TS\) factorizes \(L\) and thus \(L\) has a pdms root (by (c1)).
The claim about \(K_{\OPN{bd}}\) follows from the boundedness of the function
\(X \ni x \mapsto \max(K(x,x),1)^{-1/2} \in (0,\infty)\).\par
We turn to (IV). Assume \(K\) is bounded and has a pdms root. It is sufficient
to show that for any \(x \in X\), \(\lim_{n\to\infty} K(x,y_n) = 0\) for any
one-to-one sequence \((y_n)_{n=1}^{\infty} \subset X\). To this end, take
an upper bound \(M \geq 1\) of \(|K|\) and apply condition (e2) (in
\THM{exists}) to \(z = x\) and \(f = \frac1M e_{y_n}\): (e2) implies that
\(\lim_{n\to\infty} \frac1M K(y_n,x) = 0\). Consequently, \(K(x,y_n) =
\overline{K(y_n,x)}\) converges to \(0\) as \(n\) tends to \(\infty\).\par
Now assume \(K\) is an \(\ell_2\)-kernel. Then \(\lin\{K(x,\cdot)\dd\ x \in X\}
\subset \ell_2(X) \cap \hhH_K\) and hence \(\ell_2(X) \cap \hhH_K\) is dense in
\(\hhH_K\). So, condition (d) of \THM{exists} shows that \(K\) has a pdms root.
Since each pdms root is an \(\ell_2\)-kernel, the whole conclusion of (V)
follows.\par
To  prove (VI) we apply condition (c1) of \THM{exists}. So, for any \(s \in S\)
there exists a closable operator \(T_s\dd \ell_{fin}(X) \to H_s\) that
factorizes \(K_s\). Then, for any \(x \in X\), \(\sum_{s \in S} \|T_s e_x\|^2 =
\sum_{s \in S} K_s(x,x) = K(x,x) < \infty\) and therefore \(\oplus_{s \in S} T_s
e_x \in \bigoplus_{s \in S} H_s\). In particular, for any \(f \in
\ell_{fin}(X)\), \(Tf \df \oplus_{s \in S} T_s f \in \bigoplus_{s \in S} H_s\).
In this way we have defined a linear operator \(T\dd \ell_{fin}(X) \to H \df
\bigoplus_{s \in S} H_s\). It is readily seen that \(T\) is closable---as all
\(T_s\) are such. Moreover, for any \(x, y \in X\) we have
\[\scalar{Te_x}{Te_y}_H = \sum_{s \in S} \scalar{T_s e_x}{T_s e_y}_{H_s} =
\sum_{s \in S} K_s(x,y) = K(x,y)\]
which shows that \(T\) factorizes \(K\). So, an application of (c1) from
\THM{exists} completes the proof of (VI).\par
We turn to (VII). Let \(\{K_{\sigma}\}_{\sigma\in\Sigma}\) be an increasing net
(of pd kernels with pdms roots) whose pointwise limit is \(K\). For simplicity,
we set \(H_{\sigma} \df \hhH_{K_{\sigma}}\ (\sigma \in \Sigma)\) and \(H \df
\hhH_K\). It follows from \LEM{incr} that \(H_* \df \bigcup_{\sigma\in\Sigma}
H_{\sigma}\) is a dense subspace of \(H\). Moreover, \LEM{classic} yields that
the identity operator \(I_{\sigma}\dd H_{\sigma} \to H\) is continuous; whereas
condition (d) of \THM{exists} implies that \(\ell_2(X) \cap H_{\sigma}\) is
dense in \(H_{\sigma}\). Consequently, \(I_{\sigma}(\ell_2(X) \cap H_{\sigma})\)
is dense in \(I_{\sigma}(H_{\sigma})\) and therefore the closure of \(\ell_2(X)
\cap H\) (in \(H\)) contains \(H_*\), which finishes the proof of (VII).\par
Property (VIII) immediately follows from \LEM{classic} (which implies that under
the assumption of (VIII), \(\hhH_K = \hhH_L\) and their topologies coincide) and
condition (d) of \THM{exists} (since in that case \(\ell_2(X) \cap \hhH_K =
\ell_2(X) \cap \hhH_L\)).\par
To prove (IX), denote by \(P\) the orthogonal projection from \(\hhH_K\) onto
the closure \(E\) (in \(\hhH_K\)) of \(\ell_2(X) \cap \hhH_K\) and define
\(K_0\) as follows:
\[K_0(x,y) = \scalar{P K(x,\cdot)}{P K(y,\cdot)}_{\hhH_K} \qquad (x, y \in X).\]
Then \(K_0\) is a pd kernel such that \(\hhH_{K_0} = E\) and the inner product
of \(\hhH_{K_0}\) coincides with the one on \(E\) inherited from \(\hhH_K\)
(consult, e.g., Theorem~5 on page 37 in \cite{sai}). In particular, \(\ell_2(X)
\cap \hhH_{K_0}\) is dense in \(\hhH_{K_0}\) (and hence \(K_0\) has a pdms
root---see (d) in \THM{exists}) and \LEM{classic} implies that \(K_0 \ll K\).
Now assume \(L \gg 0\) has a pdms root and satisfies \(L \ll K\). Again:
\begin{itemize}
\item condition (d) of \THM{exists} yields that \(\ell_2(X) \cap \hhH_L\) is
 dense in \(\hhH_L\);
\item \LEM{classic} implies that \(\hhH_L \subset \hhH_K\) and the identity
 operator \(I\dd \hhH_L \to \hhH_K\) has norm not exceeding \(1\).
\end{itemize}
It follows from the former of the above properties that \(I(\ell_2(X) \cap
\hhH_L)\) is dense in \(I(\hhH_L)\). Consequently, \(\hhH_L = I(\hhH_L)\) is
contained in the closure (in \(\hhH_K\)) of \(\ell_2(X) \cap \hhH_L\). So,
\(\hhH_L \subset E = \hhH_{K_0}\). Since \(\|I\| \leqsl 1\), we obtain \(L \ll
K_0\) (one more time by \LEM{classic}). The maximality of \(K_0\) (just proved)
implies the uniqueness of \(K_0\).\par
Finally, we turn to (X). We will use here \THM{uniq} (that has not been proved
yet!). A careful reader will verify that the proof of that theorem presented
below is independent of this part of the present result. Equip the vector space
\(H \df \ell_2(X) \cap \hhH_K\) with the inner product
\[\scalar{u}{v}_H \df \scalar{u}{v}_{\ell_2(X)} + \scalar{u}{v}_{\hhH_K} \qquad
(u, v \in H).\]
It is a kind of folklore that the above \(H\) is a Hilbert space on which all
the evaluation functionals (that is, all functions of the form \(u \mapsto
u(x)\) where \(x \in X\)) are continuous (actually both these properties are
easy to prove). This means that \(H\) has a reproducing kernel, say \(K'\).
Since then \(\hhH_{K'} = H \subset \hhH_K\), it follows from \LEM{classic} that
\(K' \ll cK\) for some constant \(c > 0\). We define \(K_1\) as \(\frac1c K'\).
Observe that \(K_1 \ll K\) and \(\hhH_{K_1} = H \subset \ell_2(X) =
\hhH_{\delta_X}\). Another application of \LEM{classic} yields that \(K_1 \ll c'
\delta_X\) for some constant \(c' > 0\). So, \THM{uniq} implies that \(K_1\) has
a unique pdms root. Now let \(L \gg 0\) be such that
\begin{equation}\label{eqn:aux27}
L \ll a K
\end{equation}
for some constant \(a > 0\) and \(L\) has a unique pdms root. Then---again by
\THM{uniq}---\(L \ll a' \delta_X\) for some constant \(a' > 0\). The last
property combined with \LEM{classic} gives \(\hhH_L \subset \hhH_{\delta_X} =
\ell_2(X)\), whereas, similarly, \eqref{eqn:aux27} yields \(\hhH_L \subset
\hhH_K\). So, \(\hhH_L \subset H = \hhH_{K_1}\) and another application of
\LEM{classic} leads to the final conclusion: there exists constant \(b > 0\)
such that \(L \ll b K_1\).
\end{proof}

The next result gives a description of all pdms roots of a pd kernel (that has
such a root).

\begin{pro}{roots}
Let \(K\) be a pd kernel that has a pdms root.
\begin{enumerate}[\upshape(I)]
\item There is a unique positive self-adjoint operator \(B\) in \(\ell_2(X)\)
 that factorizes \(K\) and has \(\ell_{fin}(X)\) as a core.
\item If \(B\) is as specified in \textup{(I)}, then there is a one-to-one
 correspondence \(\kappa\dd \vvV \to \rrR\) between the set \(\vvV\) of all
 linear isometries \(V\dd \overline{\RrR}(B) \to \ell_2(X)\) such that
 the operator \(VB\) is positive, and the set \(\rrR\) of all pdms roots of
 \(K\); \(\kappa\) is given by the rule:
 \begin{equation}\label{eqn:aux24}
 (\kappa(V))(x,y) = \scalar{VBe_x}{e_y}_{\ell_2(X)} \qquad (x, y \in X).
 \end{equation}
\end{enumerate}
\end{pro}
\begin{proof}
The existence of the operator \(B\) with all properties (apart from
the uniqueness) specified in (I) follows from condition (c2) in \THM{exists}. We
fix it and after proving (II) we will show its uniqueness.\par
We turn to (II). Fix for a moment \(V \in \vvV\). It is easy to check that
\(\kappa(V)\) given by \eqref{eqn:aux24} is a pd kernel (because \(VB\) is
positive). Moreover, we have \(\kappa(V)(x,\cdot) = VBe_x\) and hence
\(\kappa(V)\) is an \(\ell_2\)-kernel such that \((\kappa(V) * \kappa(V))(x,y) =
\scalar{VBe_x}{VBe_y}_{\ell_2(X)} = \scalar{Be_x}{Be_y}_{\ell_2(X)} = K(x,y)\).
So, \(\kappa(V) \in \rrR\).\par
Since members of \(\vvV\) are defined (only) on \(\overline{\RrR}(B)\) and
\(\ell_{fin}(X)\) is a core of \(B\), we readily conclude that \(\kappa\) is
one-to-one. So, to end the proof of (II), it remains to check the surjectivity
of \(\kappa\). To this end, let \(L\) be a pdms root of \(K\). We consider
\(L_{\op}\) with target space \(\ell_2(X)\). Observe that for any \(x, y \in
X\), \(\scalar{L_{\op}e_x}{L_{\op}e_y}_{\ell_2(X)} = K(x,y) =
\scalar{Be_x}{Be_y}_{\ell_2(X)}\). This equation implies that there is a linear
isometry \(V\dd \overline{B(\ell_{fin}(X))} \to \ell_2(X)\) such that \(L_{\op}f
= V(Bf)\) for any \(f \in \ell_{fin}(X)\). Since \(\ell_{fin}(X)\) is a core for
\(B\), we get that \(\DdD(V) = \overline{\RrR}(B)\) and \(VB\) is positive (as
it is positive on \(\ell_{fin}(X)\). Consequently, \(V \in \vvV\) and
\[(\kappa(V))(x,y) = \scalar{VBe_x}{e_y}_{\ell_2(X)} =
\scalar{L_{\op}e_x}{e_y}_{\ell_2(X)} = L(x,y) \qquad (x, y \in X).\]\par
Having (II), we can briefly validate the uniqueness of \(B\). Assume \(A\) is
a positive self-adjoint operator in \(\ell_2(X)\) that factorizes \(K\) and has
\(\ell_2(X)\) as a core. Then \(\scalar{Ae_x}{Ae_y}_{\ell_2(X)} = K(x,y) =
\scalar{Be_x}{Be_y}_{\ell_2(X)}\) for any \(x, y \in X\) and it follows from
the previous paragraph that \(A\restriction{\ell_{fin}(X)} =
VB\restriction{\ell_{fin}(X)}\) for some \(V \in \vvV\). Since \(V\) is
isometric and \(\ell_{fin}(X)\) is a core for both \(A\) and \(B\), we obtain
\(A = VB\). Hence \(\NnN(A) = \NnN(B)\). Extend \(V\) to the partial isometry
\(Q\) such that \(\NnN(Q) = \NnN(B)\). Then also \(A = QB\) and it follows from
the uniqueness of the polar decomposition that \(A = B\).
\end{proof}

As an immediate consequence we obtain the following result (we skip its proof).

\begin{cor}{sa}
For any pd kernel that has a pdms root there exists a unique pdms root \(K\)
such that the closure of \(K_{\op}\dd \ell_{fin}(X) \to \ell_2(X)\) is
self-adjoint.
\end{cor}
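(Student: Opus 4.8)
The plan is to reduce the statement to a fact about the single positive operator attached to the kernel by \PRO{roots}. Write \(L\) for the given pd kernel and let \(B\) be the unique positive self-adjoint operator in \(\ell_2(X)\) that factorizes \(L\) and has \(\ell_{fin}(X)\) as a core (item (I) of \PRO{roots}). By item (II) of \PRO{roots} the pdms roots of \(L\) are precisely the kernels \(\kappa(V)\), where \(V\) ranges over the set \(\vvV\) of isometries \(V\dd \overline{\RrR}(B) \to \ell_2(X)\) for which \(VB\) is positive; and from \eqref{eqn:aux24} together with the identity \(\kappa(V)(x,\cdot) = VBe_x\) proved there one sees that \((\kappa(V))_{\op} = VB\) on \(\ell_{fin}(X)\). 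Since \(V\) is an isometry and \(B\) is closed, \(VB\) is a closed operator with domain \(\DdD(B)\) and \(\ell_{fin}(X)\) is a core for it; hence the closure of \((\kappa(V))_{\op}\) is exactly \(VB\). Thus the corollary becomes the operator-theoretic assertion that among all \(V \in \vvV\) there is one and only one for which \(VB\) is self-adjoint.

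For existence I would simply take the inclusion \(\iota\dd \overline{\RrR}(B) \hookrightarrow \ell_2(X)\). It is an isometry and \(\iota B = B \geqsl 0\), so \(\iota \in \vvV\); moreover \(\iota B = B\) is self-adjoint. Hence \(K \df \kappa(\iota)\)---explicitly the kernel \(K(x,y) = \scalar{Be_x}{e_y}_{\ell_2(X)}\)---is a pdms root of \(L\) whose associated closure \(\overline{K_{\op}} = B\) is self-adjoint.

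For uniqueness the tool is the uniqueness of the polar decomposition, and this is the step I expect to carry the whole argument. Suppose \(V \in \vvV\) is such that \(T \df VB\) is self-adjoint; being a member of \(\vvV\), it is also positive. Extend \(V\) to the partial isometry \(Q\) on \(\ell_2(X)\) with \(\NnN(Q) = \overline{\RrR}(B)^{\perp} = \NnN(B)\); then \(T = QB\), and since \(Bx \in \overline{\RrR}(B)\) for \(x \in \DdD(B)\) one checks that \(\NnN(T) = \NnN(B) = \NnN(Q)\). Therefore \(T = QB\) is a decomposition of the form required by the uniqueness of the polar decomposition, so \(|T| = B\). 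But a positive self-adjoint operator equals its own absolute value, whence \(T = |T| = B\); that is, \(VBx = Bx\) for every \(x \in \DdD(B)\). As \(\RrR(B)\) is dense in \(\overline{\RrR}(B)\), this forces \(V = \iota\), and consequently \(\kappa(V) = \kappa(\iota) = K\). Thus \(K\) is the unique pdms root of the required form.

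The only genuinely delicate point is the identification \(|T| = B\) and the subsequent appeal to the uniqueness of the positive square root (equivalently, of the polar decomposition); everything else---closedness of \(VB\), the fact that \(\ell_{fin}(X)\) remains a core after applying the isometry \(V\), and the kernel/operator dictionary supplied by \PRO{roots}---is routine.
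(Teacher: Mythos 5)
Your proof is correct and follows exactly the route the paper intends: the corollary is stated there as an immediate consequence of \PRO{roots} with the proof skipped, and your argument---identifying \(\overline{(\kappa(V))_{\op}}\) with the closed operator \(VB\), taking the inclusion \(\iota\dd \overline{\RrR}(B) \hookrightarrow \ell_2(X)\) for existence, and for uniqueness extending \(V\) to a partial isometry \(Q\) with \(\NnN(Q) = \NnN(B) = \NnN(VB)\) and invoking the uniqueness of the polar decomposition to force \(VB = B\)---is precisely the device the paper itself uses in the uniqueness part of the proof of \PRO{roots}. All the auxiliary facts you flag (closedness of \(VB\), \(\ell_{fin}(X)\) remaining a core, \(|T| = T\) for a positive self-adjoint \(T\)) check out, so nothing further is needed.
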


This is a good moment to give

\begin{proof}[Proof of \THM{uniq}]
Everywhere in this proof \(K_{\op}\) is considered as an operator with target
space \(\hhH_K\). We start from showing that condition (iii) of the theorem is
equivalent to:
\begin{itemize}
\item[(bd)] \(K_{\op}\dd \ell_{fin}(X) \to \hhH_K\) is bounded.
\end{itemize}
Indeed, (iii) says that for some \(c > 0\) we have
\begin{equation}\label{eqn:aux25}
\sum_{x \in X} u(x)\overline{u(y)} K(x,y) \leqsl \delta \sum_{x \in X} |u(x)|^2
\end{equation}
for any \(u \in \ell_{fin}(X)\), which is equivalent to (bd), as the left-hand
side of \eqref{eqn:aux25} coincides with \(\|K_{\op}u\|^2\). Now we turn to
the main part of the proof.\par
Assume (iii) holds. Then the absolute value \(B\) of the closure of \(K_{\op}\)
is positive and bounded (by (bd)). Observe that
\[\scalar{Be_x}{Be_y}_{\ell_2(X)} = \scalar{K_{\op}e_x}{e_y}_{\ell_2(X)} =
K(x,y)\] and thus (e.g. by condition (c1) in \THM{exists}) \(K\) has a pdms
root. Moreover, the above \(B\) witnesses property (I) in \PRO{roots}. So, it
follows from item (II) of that proposition that all other possible pdms roots
are in one-to-one correspondence with linear isometries \(V\dd
\overline{\RrR}(B) \to \ell_2(X)\) such that \(VB\) is positive. But if \(V\) is
such an isometry, then \(VB\) is self-adjoint (being positive and bounded) and
it follows from the uniqueness of \(B\) that \(VB = B\) or, equivalently, that
\(V\) is the identity. This shows (i)---that is, that \(K\) has a unique pdms
root.\par
To prove the reverse implication, we assume that (iii) is false and we will show
that (i) is false as well. To this end, assume \(K\) has a pdms root (otherwise
(i) does not hold). Let \(B\) witness property (I) in \PRO{roots}. We claim that
\(B\) is not bounded. Indeed, since (iii) does not hold, (bd) is false. And we
infer from the proof of \LEM{forall} that \(K_{\op} =
VB\restriction{\ell_{lin}(X)}\) for some linear isometry \(V\dd B(\ell_{fin}(X))
\to \hhH_K\). So, \(B\) is not bounded as \(K_{\op}\) is not such. Now it
follows from \THM{unbd} that there exists a positive (densely defined) operator
\(T\) in \(\ell_2(X)\) such that \(|T| = B \neq T\). Let \(T = QB\) be the polar
decomposition of \(T\). Then \(Q\restriction{\overline{\RrR}(B)}\) is isometric
and differs from the identity map. So, there are at least two pdms roots of
\(K\) thanks to item (II) of \PRO{roots}.\par
Further, observe that it follows from the equivalence of (i) and (iii) (which we
have already proved) that (ii) is implied by (i). So, it remains to show that if
(iii) does not hold, then (ii) is false. To this end, assume there is no \(c >
0\) for which \(K \ll c \delta_X\). Equivalently, \(\hhH_K \not\subset
\hhH_{\delta_X} = \ell_2(X)\) (cf.\ \LEM{classic}). So, there exists a unit
vector \(u \in \hhH_K\) such that \(u \notin \ell_2(X)\). Then \(L \df \bar{u}
\otimes u\) is a pd kernel such that \(L \ll K\) (since \(u\) is a unit vector
in both \(\hhH_K\) and \(\hhH_L\)---see, e.g., Corollary~2 on page 45 in
\cite{sai}) and \(\hhH_L = \lin\{u\}\) where
\begin{equation}\label{eqn:otimes}
\bar{u} \otimes u\dd X \times X \ni (x,y) \mapsto \overline{u(x)} u(y) \in \CCC.
\end{equation}
In particular,
\(\ell_2(X) \cap \hhH_L = \{0\}\) and hence \(L\) does not have a pdms root
(thanks to condition (d) of \THM{exists}).
\end{proof}

\begin{rem}{uncountable}
An inspection of the proofs of \THM[s]{unbd} and \ref{thm:uniq}, combined with
\PRO{iso-lin}, shows that if a pd kernel has at least two pdms roots, then it
actually has uncountably many such roots. We leave the details to interested
readers.
\end{rem}

\begin{cor}{incl}
A pd kernel \(K\) on \(X\) has a unique pdms root iff \(\hhH_K \subset
\ell_2(X)\).
\end{cor}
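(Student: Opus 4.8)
The plan is to chain together the two equivalences that are already at our disposal, so that the corollary falls out as an immediate corollary of \THM{uniq} and \LEM{classic}. First I would invoke \THM{uniq}: the equivalence of its conditions (i) and (iii) tells us that $K$ has a unique pdms root precisely when $K \ll c\,\delta_X$ for some constant $c > 0$. This reduces the whole statement to showing that the latter majorization is equivalent to the inclusion $\hhH_K \subset \ell_2(X)$.

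Next I would apply \LEM{classic} with $L = \delta_X$, recalling the standard identification $\hhH_{\delta_X} = \ell_2(X)$ noted in Section~5. The ``moreover'' clause of that lemma asserts that $\hhH_K \subset \hhH_{\delta_X}$ holds if and only if $K \ll c^2\,\delta_X$ for some $c > 0$. Writing $c' = c^2$, this is exactly the condition ``$K \ll c'\,\delta_X$ for some $c' > 0$'' appearing in \THM{uniq}(iii). Putting the two equivalences side by side gives the chain $K \text{ has a unique pdms root} \iff K \ll c\,\delta_X \text{ for some } c > 0 \iff \hhH_K \subset \hhH_{\delta_X} = \ell_2(X)$, which is precisely the assertion.

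I expect no genuine obstacle here, since both building blocks have been established earlier in the excerpt; the only point demanding a moment's care is the trivial bookkeeping of constants when passing between $c$ and $c^2$, and this is harmless because in every relevant statement the constant is merely quantified existentially over the positive reals. Thus the proof is essentially the single displayed chain of equivalences above, with \THM{uniq} supplying the first $\iff$ and \LEM{classic} (together with $\hhH_{\delta_X} = \ell_2(X)$) supplying the second.
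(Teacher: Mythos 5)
Your proof is correct and follows exactly the paper's route: the paper's own proof consists of the single remark that the assertion ``immediately follows from \THM{uniq} and \LEM{classic},'' and your write-up simply makes that chain explicit via \THM{uniq}(i)\(\iff\)(iii) and the ``moreover'' clause of \LEM{classic} applied with \(L = \delta_X\) and \(\hhH_{\delta_X} = \ell_2(X)\). Nothing further is needed.
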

\begin{proof}
The assertion immediately follows from \THM{uniq} and \LEM{classic}.
\end{proof}

\begin{cor}{sum}
Let \(K\) be a pd kernel on \(X\) that has a unique pdms root. For any pd kernel
\(L\) on \(X\), \(L\) has a pdms root iff so has \(K+L\).
\end{cor}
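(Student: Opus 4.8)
The plan is to transfer everything to the level of the reproducing-kernel operators $K_{\op}$, $L_{\op}$ and $(K+L)_{\op}$, and then to use condition (c3) of \THM{exists}, which says that a pd kernel $M$ has a pdms root exactly when $M_{\op}\dd\ell_{fin}(X)\to\hhH_M$ is closable. First I would extract the two facts hidden in the hypothesis on $K$: by \COR{incl} (equivalently \THM{uniq}) the kernel $K$ has a unique pdms root iff $\hhH_K\subset\ell_2(X)$, i.e.\ $K\ll c\delta_X$ for some $c>0$; and the equivalence (iii)$\,\iff\,$(bd) isolated inside the proof of \THM{uniq} upgrades this to the statement that $K_{\op}\dd\ell_{fin}(X)\to\hhH_K$ is \emph{bounded} (hence closable, with everywhere-defined bounded closure). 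The boundedness of $K_{\op}$ will be the whole engine of the argument.

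Next I would invoke Aronszajn's theorem on sums of kernels (\cite{aro}; see also \cite{sai}): $\hhH_{K+L}=\hhH_K+\hhH_L$ and the summation map $\Phi\dd\hhH_K\oplus\hhH_L\to\hhH_{K+L}$, $\Phi(g,h)=g+h$, is a coisometry, so $\Phi^*$ is an isometry with closed range and $\Phi\Phi^*=I$. A one-line computation with the reproducing property gives $\Phi^*\bigl((K+L)(x,\cdot)\bigr)=\bigl(K(x,\cdot),L(x,\cdot)\bigr)$ for every $x\in X$; evaluating on the canonical basis this says precisely that $\Phi^*\circ(K+L)_{\op}=P$, where $P\dd\ell_{fin}(X)\to\hhH_K\oplus\hhH_L$ is the operator $Pf=(K_{\op}f,\,L_{\op}f)$. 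Because $\Phi$ is bounded, $\Phi^*$ is an isometry and $\Phi\Phi^*=I$, closability passes faithfully across $\Phi^*$ in both directions, so $(K+L)_{\op}$ is closable iff $P$ is closable. Feeding this through (c3) (applied to $K+L$ and, separately, to $L$) reduces the whole corollary to the operator-theoretic equivalence
\[
P\ \text{is closable}\qquad\Longleftrightarrow\qquad L_{\op}\ \text{is closable.}
\]

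Finally I would settle this equivalence using the boundedness of $K_{\op}$. If $L_{\op}$ is closable, then $P=(K_{\op},L_{\op})$ is the ``vector'' of two closable operators and is itself closable, so $K+L$ has a pdms root (this direction is of course also immediate from item (VI) of \THM{pdms} with two summands). For the converse, suppose $L_{\op}$ is not closable: there is $\alpha_n\to0$ in $\ell_2(X)$ with $L_{\op}\alpha_n\to v\neq0$ in $\hhH_L$. Since $K_{\op}$ is bounded, $K_{\op}\alpha_n\to0$, whence $P\alpha_n=(K_{\op}\alpha_n,L_{\op}\alpha_n)\to(0,v)\neq0$, contradicting the closability of $P$; thus $L_{\op}$ is closable and $L$ has a pdms root. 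I expect the main obstacle to be the bookkeeping around the kernel-sum identification---verifying that $\Phi$ is genuinely a coisometry (so that $\Phi^*$ is bounded \emph{below} and transports closability faithfully, giving a true ``iff'' rather than a single implication), and checking the adjoint formula for $\Phi^*$. Everything past that reduction is a two-line argument, and its only nontrivial input is that the unique-root hypothesis forces $K_{\op}$ to be bounded.
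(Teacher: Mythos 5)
Your proof is correct, and its engine is exactly the paper's: the unique-root hypothesis forces \(K_{\op}\dd \ell_{fin}(X) \to \hhH_K\) to be bounded (the (iii)\(\iff\)(bd) step isolated in the proof of \THM{uniq}), the column operator \(P\dd f \mapsto K_{\op}f \oplus L_{\op}f\) factorizes \(K+L\), and a column with one bounded entry is closable iff the other entry is---your argument for this last equivalence is precisely the paper's one-liner ``Since \(T\) is bounded, \(R\) is closable iff so is \(T \oplus R\).'' The only place you genuinely diverge is the bookkeeping you yourself flagged: to tie closability of \(P\) to the pdms-root property of \(K+L\) you invoke Aronszajn's sum theorem (\(\hhH_{K+L} = \hhH_K + \hhH_L\) with the summation map \(\Phi\) a coisometry) and compute \(\Phi^*\bigl((K+L)(x,\cdot)\bigr) = \bigl(K(x,\cdot),L(x,\cdot)\bigr)\), so that \(\Phi^* \circ (K+L)_{\op} = P\) with \(\Phi^*\) an isometry with closed range; your transfer of closability along \(\Phi^*\) (using that \(\Phi\) is isometric on \(\NnN(\Phi)^{\perp}\), so that a limit of \(P\alpha_n\) lying in \(\RrR(\Phi^*)\) vanishes iff its \(\Phi\)-image does) is sound. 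The paper short-circuits all of this: since \(P\) visibly factorizes \(K+L\) (as \(\scalar{Pe_x}{Pe_y} = K(x,y)+L(x,y)\)), \LEM{forall} already guarantees that \((K+L)_{\op}\) is closable iff \(P\) is, and condition (c1) of \THM{exists} converts that into the existence of a pdms root---no structure theory of \(\hhH_{K+L}\) is needed. In fact the isometry constructed in the proof of \LEM{forall} is exactly your \(\Phi^*\) restricted to \(\overline{\RrR}\bigl((K+L)_{\op}\bigr)\), so your Aronszajn detour amounts to re-proving that lemma in this special case: your route buys an explicit description of the intertwiner, while the paper's buys brevity and independence from the sum theorem of \cite{aro}.
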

\begin{proof}
The proof of \THM{uniq} shows that \(T \df K_{\op}\dd \ell_{fin}(X) \to H \df
\hhH_K\) is bounded. Let \(R\dd \ell_{fin}(X) \to E\) be any operator that
factorizes \(L\). Then \(T \oplus R\dd \ell_{fin}(X) \to H \oplus E\) factorizes
\(K+L\) (as \(T\) factorizes \(K\)). Since \(T\) is bounded, \(R\) is closable
iff so is \(T \oplus R\). Thus, the conclusion follows from condition (c1) of
\THM{exists} and \LEM{forall}.
\end{proof}

\begin{exm}{otimes}
Simplest possible pd kernels on a set \(X\) are of the form \eqref{eqn:otimes}
where \(u\dd X \to \CCC\) is totally arbitrary. (The simplicity of these kernels
can be justified as follows: they are precisely those pd kernels \(K\) for which
\(\dim(\hhH_K) \leqsl 1\).) Since \(\hhH_{\bar{u} \otimes u} = \lin \{u\}\),
we either have \(\hhH_{\bar{u} \otimes u} \subset \ell_2(X)\) or \(\ell_2(X)
\cap \hhH_{\bar{u} \otimes u} = \{0\}\). So, condition (d) of \THM{exists} and
\COR{incl} imply that for a function \(v\dd X \to \CCC\) \tfcae
\begin{enumerate}[(i)]
\item \(\bar{v} \otimes v\) has a pdms root;
\item \(\bar{v} \otimes v\) has a unique pdms root;
\item \(v \in \ell_2(X)\).
\end{enumerate}
In \EXM{uniform} we will use the above charecterization to give
a (counter)example witnessing that the uniform limit of bounded pd kernels
having a unique pdms root can have no pdms roots.
\end{exm}

To simplify further statements, we introduce a few additional notions:

\begin{dfn}{diag}
A pd kernel \(K\) on \(X\) is said to be
\begin{itemize}
\item \textit{diagonal} if \(K(x,y) = 0\) for any distinct \(x, y \in X\);
\item \textit{pointwise countable} if for any \(x \in X\) the set \(\{y \in X\dd
 K(x,y) \neq 0\}\) is (at most) countable.
\end{itemize}
A \textit{rescaling} of \(K\) is a pd kernel of the form
\begin{equation}\label{eqn:scale}
X \times X \ni (x,y) \mapsto \overline{u(x)} u(y) K(x,y) \in \CCC
\end{equation}
where \(u\dd X \to \CCC\) is abitrary. The kernel given by \eqref{eqn:scale} is
called \textit{\(u\)-rescaling}. The \(u\)-rescaling is \textit{non-vanishing}
if \(u\) has no zeros.
\end{dfn}

Our nearest aim is to characterize those kernels that admit a non-vanishing
rescaling having a pdms root. The following result will serve as a useful tool
in investigating this issue. Everywhere below \(\NNN\) denotes the set of all
positive integers.

\begin{lem}{scale}
Let \(K\) be a pd kernel on \(\NNN\) and \(D\) be the diagonal pd kernel such
that \(D(n,n) = n^2\) for any \(n\).
\begin{enumerate}[\upshape(I)]
\item If \(\sum_{n,m=1}^{\infty} |K(n,m)|^2 \leqsl 1\), then \(K \ll
 \delta_{\NNN}\).
\item If \(\sum_{n=1}^{\infty} K(n,n) \leqsl 1\), then \(K \ll \delta_{\NNN}\).
\item If \(|K|\) is upper bounded by \(c > 0\), then \(K \ll \frac{c \pi^2}{6}
 D\).
\end{enumerate}
\end{lem}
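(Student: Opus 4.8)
The plan is to reduce (I) and (II) to an elementary fact about finite positive semidefinite matrices, and then to deduce (III) from (II) by a change of scale. Recall that $K \ll \delta_{\NNN}$ means $\delta_{\NNN} - K \gg 0$; since the positive-definiteness inequality only constrains finitely many points at a time, this is equivalent to the requirement that for every finite $F \subset \NNN$ the Hermitian (and positive semidefinite) matrix $A_F \df [K(n,m)]_{n,m \in F}$ satisfy $A_F \leqsl I$, i.e. that its largest eigenvalue $\lambda_{\max}(A_F)$ not exceed $1$. So for (I) and (II) it suffices to bound $\lambda_{\max}(A_F)$ uniformly over all finite $F$, and no infinite-dimensional operator theory is needed.

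For (I) I would use that, the eigenvalues of $A_F$ being non-negative reals, $\lambda_{\max}(A_F)^2 \leqsl \sum_i \lambda_i(A_F)^2 = \|A_F\|_F^2 = \sum_{n,m \in F} |K(n,m)|^2 \leqsl 1$, whence $\lambda_{\max}(A_F) \leqsl 1$ for every $F$ and therefore $K \ll \delta_{\NNN}$. For (II) I would instead bound the top eigenvalue by the trace: since all eigenvalues are non-negative, $\lambda_{\max}(A_F) \leqsl \operatorname{tr}(A_F) = \sum_{n \in F} K(n,n) \leqsl 1$, and again $K \ll \delta_{\NNN}$. Neither of these presents a genuine obstacle.

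The only new idea is in (III), and it is a rescaling argument. First I would record that rescaling preserves positive definiteness: for any pd kernel $M$ on $\NNN$ and any $w \dd \NNN \to \CCC$, the kernel $(n,m) \mapsto \overline{w(n)} w(m) M(n,m)$ is pd, as is seen immediately by substituting $\nu_j \df \lambda_j \overline{w(x_j)}$ into the defining sum for $M$. Now set $\tilde K(n,m) \df K(n,m)/(nm)$, the rescaling of $K$ by $w(n) = 1/n$; it is pd, and the hypothesis $|K| \leqsl c$ gives $\tilde K(n,n) = K(n,n)/n^2 \leqsl c/n^2$, so $\sum_{n=1}^{\infty} \tilde K(n,n) \leqsl c \sum_{n=1}^{\infty} n^{-2} = c\pi^2/6$. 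Applying (II) to $\tfrac{6}{c\pi^2} \tilde K$, whose diagonal sums to at most $1$, yields $\tilde K \ll \tfrac{c\pi^2}{6} \delta_{\NNN}$.

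Finally I would rescale back by $w(n) = n$: applying the positivity-preserving rescaling to the pd kernel $\tfrac{c\pi^2}{6} \delta_{\NNN} - \tilde K$ multiplies each $(n,m)$ entry by $nm$, turning $\tilde K$ into $K$ and $\delta_{\NNN}$ into the kernel $(n,m) \mapsto nm\,\delta_{\NNN}(n,m) = D(n,m)$. Hence $\tfrac{c\pi^2}{6} D - K \gg 0$, that is, $K \ll \tfrac{c\pi^2}{6} D$, as required. The one thing to keep straight is the bookkeeping of the two rescalings (by $1/n$ and then by $n$), which compose to the identity on $K$ while sending $\delta_{\NNN}$ to $D$; everything else is routine.
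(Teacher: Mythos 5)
Your proof is correct and follows essentially the same route as the paper: your Frobenius-norm bound on the finite sections $A_F$ in (I) is just the paper's Cauchy--Schwarz estimate on the quadratic form written in matrix language, and your two-step rescaling in (III) (rescale by $1/n$, apply (II), rescale back by $n$) is exactly the paper's argument with $u(n)=\sqrt{6}/(\pi n\sqrt{c})$. The only deviation is in (II), where you bound $\lambda_{\max}(A_F)$ directly by the trace of the positive semidefinite matrix $A_F$, whereas the paper deduces (II) from (I) via the $2\times 2$ positivity inequality $|K(n,m)|^2\leqsl K(n,n)K(m,m)$---your variant is slightly more direct and equally sound.
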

\begin{proof}
Although properties (I) and (II) are consequences of well-known results on
Schatten class and Hilbert-Schmidt operators (consult, e.g., \cite{sch} or
the material of \S18 in \cite{co2}), below we present their brief proofs.\par
To show (I), for any \(\alpha \in \ell_{fin}(\NNN)\) the Schwarz inequality
yields
\begin{multline*}
\sum_{n,m=1}^{\infty} \alpha(n)\overline{\alpha(m)} K(n,m) \leqsl
\sqrt{\sum_{n,m=1}^{\infty} |\alpha(n)\overline{\alpha(m)}|^2} \cdot
\sqrt{\sum_{n,m=1}^{\infty} |K(n,m)|^2}\\\leqsl
\sqrt{\sum_{n=1}^{\infty} |\alpha(n)|^2 \cdot \sum_{m=1}^{\infty}
|\alpha(m)|^2} = \sum_{n=1}^{\infty} |\alpha(n)|^2
\end{multline*}
which is equivalent to \(K \ll \delta_{\NNN}\). Further, (II) is a consequence
of (I) since
\[\sum_{n,m=1}^{\infty} |K(n,m)|^2 \leqsl \sum_{n,m=1}^{\infty} K(n,n) K(m,m) =
\Bigl(\sum_{n=1}^{\infty} K(n,n)\Bigr)^2\]
(cf.\ the paragraph following \THM{pdms}). We turn to (III). For simplicity,
denote the \(u\)-rescaling of a pd kernel \(L\) by \(L_u\). In particular,
\((L_u)_v = L_{uv}\). Define \(u\dd \NNN \ni n \mapsto
\frac{\sqrt{6}}{\pi n\sqrt{c}} \in (0,\infty)\). Since \(|K|\) is bounded by
\(c\) and \(\sum_{n=1}^{\infty} n^{-2} = \frac{\pi^2}{6}\), we easily get that
\(\sum_{n=1}^{\infty} K_u(n,n) \leqsl 1\). So, we conclude from (II) that \(K_u
\ll \delta_{\NNN}\). Consequently, \(K = (K_u)_{\frac1u} \ll
(\delta_{\NNN})_{\frac1u} = \frac{c \pi^2}{6} D\) and we are done.
\end{proof}

Now we can characterize pd kernels admitting non-vanishing rescalings having
pdms roots.

\begin{thm}{scale}
For a pd kernel \(K\) on \(X\) \tfcae
\begin{enumerate}[\upshape(i)]
\item \(K\) has a non-vanishing rescaling that has a pdms root;
\item the \(v\)-rescaling of \(K\) has a unique pdms root for some \(v\dd X \to
 (0,1)\);
\item \(K \ll D\) for some diagonal pd kernel \(D\) on \(X\);
\item \(K \ll C\) for some pointwise countable pd kernel \(C\) on \(X\);
\item \(K\) is pointwise countable.
\end{enumerate}
In particular,
\begin{itemize}
\item each pd kernel on a countable set satisfies conditions \textup{(ii)} and
 \textup{(iii)};
\item the collection \(\ccC\) of all pd kernels \(K\) on \(X\) that satisfy
 \textup{(i)} is a convex cone such that \(L \in \ccC\) whenever \(0 \ll L \ll
 K\) for some \(K \in \ccC\) or \(L\) is the pointwise product of two members of
 \(\ccC\).
\end{itemize}
\end{thm}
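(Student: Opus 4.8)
The plan is to prove the five conditions equivalent through the cycle $(\mathrm{i})\Rightarrow(\mathrm{v})\Rightarrow(\mathrm{iii})\Rightarrow(\mathrm{ii})\Rightarrow(\mathrm{i})$, inserting (iv) via $(\mathrm{iii})\Rightarrow(\mathrm{iv})\Rightarrow(\mathrm{v})$. Two auxiliary facts will carry the real content. First, a \emph{countable-set lemma}: every pd kernel on a countable set is dominated by a diagonal kernel. For a finite set this is immediate (take $c\delta$ with $c \geqsl \|K\|$); for a countably infinite set I identify it with $\NNN$ and rescale by $u(n)\df 1/\sqrt{\max(K(n,n),1)}$, so that the $u$-rescaling $K_u$ has diagonal $\leqsl 1$ and hence, by $|K_u(n,m)|\leqsl\sqrt{K_u(n,n)K_u(m,m)}\leqsl 1$, is bounded by $1$; \LEM{scale}(III) then gives $K_u\ll\frac{\pi^2}{6}D_0$ with $D_0(n,n)=n^2$, and applying the (non-vanishing) $(1/u)$-rescaling (which preserves $\ll$ and keeps a diagonal kernel diagonal) yields $K\ll\frac{\pi^2}{6}(D_0)_{1/u}$, a diagonal kernel. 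Second, a \emph{countable-support lemma}: if $C$ is pointwise countable, then every $f\in\hhH_C$ has countable support. This holds because $\lin\{C(z,\cdot)\dd z\in X\}$ is dense in $\hhH_C$, each generator $C(z,\cdot)$ (hence each finite combination) is countably supported, and norm convergence in $\hhH_C$ forces pointwise convergence (evaluation functionals are continuous); thus the support of a limit lies in the countable union of the supports along an approximating sequence.

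With these in hand I prove the implications as follows. For $(\mathrm{i})\Rightarrow(\mathrm{v})$: if $K_u$ with $u$ non-vanishing has a pdms root, \THM{exists}(b) writes $K_u=\bigoplus_{s} K_u\restriction{X_s\times X_s}$ over countable blocks $X_s$, so $K_u$ is pointwise countable, and since $u$ has no zeros $K$ and $K_u$ share the same zero pattern, whence $K$ is pointwise countable. For $(\mathrm{v})\Rightarrow(\mathrm{iii})$ I use the connectivity relation from the proof of \THM{exists}(b): declaring $x\approx y$ when joined by a finite chain along which $K$ does not vanish produces countable classes $X_s$ with $K=\bigoplus_s K_s$ (cross terms vanish by construction); the countable-set lemma gives diagonal $D_s\gg K_s$ on each $X_s$, and $D\df\bigoplus_s D_s\gg K$ is diagonal (using $\bigoplus L_s\gg0$ iff each $L_s\gg0$). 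For $(\mathrm{iii})\Rightarrow(\mathrm{ii})$, given $K\ll D$ with $D(x,x)=d_x$, set $v(x)\df\tfrac12/\sqrt{\max(d_x,1)}\in(0,1)$; then $K_v\ll D_v$ with $D_v(x,x)\leqsl\tfrac14$, so $K_v\ll\delta_X$ and \THM{uniq} (or \COR{incl}) shows $K_v$ has a unique pdms root. Then $(\mathrm{ii})\Rightarrow(\mathrm{i})$ is trivial (a unique root is a root, and $v$ is non-vanishing), $(\mathrm{iii})\Rightarrow(\mathrm{iv})$ is immediate (a diagonal kernel is pointwise countable), and $(\mathrm{iv})\Rightarrow(\mathrm{v})$ follows from the countable-support lemma: $K\ll C$ with $C$ pointwise countable gives $\hhH_K\subset\hhH_C$ by \LEM{classic}, so each $K(x,\cdot)\in\hhH_C$ has countable support, i.e.\ $K$ is pointwise countable.

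For the ``in particular'' clauses: every pd kernel on a countable set satisfies (iii) by the countable-set lemma, hence also (ii) by $(\mathrm{iii})\Rightarrow(\mathrm{ii})$. Writing $\ccC$ for the kernels satisfying (i), the equivalence $(\mathrm{i})\Leftrightarrow(\mathrm{v})$ identifies $\ccC$ with the pointwise countable pd kernels; positive scalars and sums preserve both positive definiteness and pointwise countability (the set where $K+L$ is nonzero is contained in the union of the sets where $K$ and $L$ are nonzero), so $\ccC$ is a convex cone. If $0\ll L\ll K$ with $K\in\ccC$, then $L$ satisfies (iv) with $C=K$, hence $L\in\ccC$. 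If $L$ is the pointwise product of $K_1,K_2\in\ccC$, then $L$ is pd by the Schur product theorem (via the feature-map description $K_i(x,y)=\scalar{j_i(x)}{j_i(y)}_{H_i}$ recalled in the preliminaries) and $\{y\dd L(x,y)\neq0\}\subset\{y\dd K_1(x,y)\neq0\}\cap\{y\dd K_2(x,y)\neq0\}$ is countable, so $L\in\ccC$.

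The main obstacle, I expect, is $(\mathrm{iv})\Rightarrow(\mathrm{v})$: the naive guess that the support of $K(x,\cdot)$ lies inside that of $C(x,\cdot)$ is false, since domination $K\ll C$ does not transfer individual zeros (a rank-one $K$ can be dominated by a diagonal $C$ while being everywhere nonzero). The argument must instead pass through the reproducing kernel Hilbert space, where the density-plus-continuity reasoning of the countable-support lemma applies. By contrast, the countable-set lemma is routine once one rescales to the bounded case covered by \LEM{scale}(III).
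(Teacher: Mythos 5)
Your proposal is correct, and for the core cycle it coincides with the paper's own argument: the paper proves (iii)\(\implies\)(ii)\(\implies\)(i)\(\implies\)(v)\(\implies\)(iii) using exactly your ingredients --- the rescaling \(v(x)=1/\max(\sqrt{D(x,x)},2)\) together with \THM{uniq} for (iii)\(\implies\)(ii), condition (b) of \THM{exists} plus the zero-pattern observation for (i)\(\implies\)(v), and the connectivity decomposition \(K=\bigoplus_s K^{(s)}\) into countable blocks for (v)\(\implies\)(iii). Two points differ. First, a minor variant: inside the countable blocks the paper dominates \(K^{(s)}\) by a diagonal kernel via \LEM{scale}(II), choosing \(u_s\) with \(\sum_{x\in X_s}u_s(x)^2K^{(s)}(x,x)\leqsl 1\) (a trace condition), whereas you normalize the diagonal and invoke \LEM{scale}(III) (a boundedness condition); both are immediate consequences of the same lemma. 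Second, a genuine divergence in how (iv) is absorbed: the paper disposes of (iv)\(\implies\)(iii) in one line by applying the already-proved implication (v)\(\implies\)(iii) to the dominating kernel \(C\) itself (pointwise countable \(C\) gives \(C\ll D\) diagonal, hence \(K\ll D\)), while you prove (iv)\(\implies\)(v) directly through a new structural lemma --- every element of \(\hhH_C\) has countable support when \(C\) is pointwise countable --- combined with \LEM{classic} to get \(\hhH_K\subset\hhH_C\). Your lemma is correct (density of \(\lin\{C(z,\cdot)\dd z\in X\}\), continuity of evaluations, and sequential approximation give countable support of limits), and your remark that naive zero-pattern transfer under \(\ll\) fails is well taken; the trade-off is that your route uses the RKHS machinery where the paper's bootstrap is purely formal, but in exchange your countable-support lemma is a reusable fact about pointwise countable kernels that the paper never states. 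The treatment of the ``in particular'' clauses (Schur product, heredity via (iv) with \(C=K\), closure of pointwise countability under sums) matches the paper's one-line reduction to the equivalence of (i), (iii) and (v).
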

\begin{proof}
Since the additional claim of the theorem easily follows from the equivalence of
(i), (iii) and (v) (recall that---according to the Schur's theorem
\cite{shu}---the pointwise product, usually called the Hadamard product, of two
pd kernels defined on a common set is a pd kernel as well; see also Section~3 of
Chapter~I in \cite{don}---consult the material on page~9 therein), we only need
to show the equivalence of all conditions (i)--(v). As done in the previous
proof, we shall denote, for simplicity, the \(u\)-rescaling of a pd kernel \(L\)
by \(L_u\). First we will show the following implications:
(iii)\(\implies\)(ii)\(\implies\)(i)\(\implies\)(v)\(\implies\)(iii) and then
we will briefly deduce the equivalence of (iv) and (iii).\par
Assume (iii) holds and let \(D\) be as specified therein. Let \(v\dd X \to
(0,1)\) be given by \(v(x) \df 1/\max(\sqrt{D(x,x)},2)\ (x \in X)\). Since \(K
\ll D\), we get \(K_v \ll D_v\) and easily \(D_v \ll \delta_X\). Hence \(K_v \ll
\delta_X\) and it follows from \THM{uniq} that \(K_v\) (or \(v\)) witnesses
(ii). Now observe that (i) implies (v) by condition (b) of \THM{exists}, and
obviously follows from (ii). So, we now assume that (v) holds and will show that
(iii) is fulfilled. First of all, observe that \(K\), being pointwise countable,
induces a decomposition of \(X = \bigsqcup_{s \in S} X_s\) into at most
countable (pairwise disjoint) sets \(X_s\) such that \(K = \bigoplus_{s \in S}
K^{(s)}\) where \(K^{(s)} = K\restriction{X_s \times X_s}\ (s \in S)\) (cf.\
the proof that (b) follows from (a1) in \THM{exists}). Since \(X_s\) is
countable, there exists a function \(u_s\dd X_s \to (0,1)\) such that
\(\sum_{x \in X_s} u_s(x)^2 K^{(s)}(x,x) \leqsl 1\). We infer from \LEM{scale}
that \((K^{(s)})_{u_s} \ll \delta_{X_s}\). Consequently, \(\bigoplus_{s \in S}
(K^{(s)})_{u_s} \ll \delta_X\). Now it suffices to define \(u\dd X \to (0,1)\)
as the union of all \(u_s\) (that is, \(u\restriction{X_s} = u_s\)) and \(D\) as
\((\delta_X)_{\frac1u}\) to get \(K = (K_u)_{\frac1u} =
\bigl(\bigoplus_{s \in S} (K^{(s)})_{u_s}\bigr)_{\frac1u} \ll
(\delta_X)_{\frac1u} = D\).\par
It remains to explain why (iv) is equivalent to (iii). Since diagonal pd kernels
are pointwise countable, (iv) follows from (iii). Conversely, if (iv) holds and
\(C\) is as specified therein, then we know that there exists a diagonal pd
kernel \(D\) such that \(C \ll D\) (as \(C\) satisfies (v)). Then clearly \(K
\ll D\) and we are done.
\end{proof}

Property (VI) of \THM{pdms} implies that if \(K\) and \(L\) are pd kernels on
\(X\) that have pdms roots, then the kernel \(K+L\) has a pdms root as well.
Conversely, a basic consequence of \THM{uniq} is that if \(K+L\) has a unique
pdms root, so have both \(K\) and \(L\). It turs out that a counterpart of this
property for pd kernels having at least two pdms roots is (in a sense
always---see \PRO{two} below) false and when the kernel acts on a countable set,
the property mentioned above crashes in a striking way (see item (A) in
\PRO{two}). To be more precise, we call a pd kernel \textit{free of pdms roots}
if no non-zero pd kernel \(L \ll K\) has a pdms root. With the aid of
the results of Section~3, we now show that

\begin{pro}{two}
Let \(K\) be a pd kernel on \(X\) that has at least two pdms roots.
\begin{enumerate}[\upshape(A)]
\item If \(X\) is countable, \(K\) is a sum of two pd kernels each of which is
 free of pdms roots.
\item \(K\) is a sum of two pd kernels none of which has a pdms root.
\end{enumerate}
\end{pro}

In the proof we shall make use of the following

\begin{lem}{free}
A pd kernel \(K\) on \(X\) is free of pdms roots iff \(\ell_2(X) \cap \hhH_K =
\{0\}\).
\end{lem}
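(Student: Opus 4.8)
The statement to prove is that a pd kernel $K$ on $X$ is free of pdms roots (meaning no non-zero pd kernel $L \ll K$ has a pdms root) if and only if $\ell_2(X) \cap \hhH_K = \{0\}$. My plan is to prove both directions by contraposition, using the characterization of pdms-root existence via condition (d) of \THM{exists} (namely, $L$ has a pdms root iff $\ell_2(X) \cap \hhH_L$ is dense in $\hhH_L$) together with the dictionary between kernel domination and inclusion of the associated Hilbert function spaces supplied by \LEM{classic}.

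For the ``only if'' direction, I would prove the contrapositive: assume $\ell_2(X) \cap \hhH_K \neq \{0\}$ and produce a non-zero $L \ll K$ having a pdms root. The natural candidate is to pick a non-zero vector $u \in \ell_2(X) \cap \hhH_K$ and set $L \df \bar{u} \otimes u$ (the rank-one kernel from \eqref{eqn:otimes}), so that $\hhH_L = \lin\{u\}$. Since $u \in \hhH_K$, \LEM{classic} (or the cited Corollary~2 on page~45 in \cite{sai}, exactly as invoked in the proof of \THM{uniq}) gives $L \ll c K$ for a suitable constant, which after rescaling $u$ we may take to be $L \ll K$; and because $u \in \ell_2(X)$ we have $\ell_2(X) \cap \hhH_L = \hhH_L$, which is trivially dense in $\hhH_L$, so $L$ has a pdms root by (d) of \THM{exists}. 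This contradicts freeness. (The single subtle point is normalizing $u$ so that $\bar{u}\otimes u \ll K$ rather than merely $\ll cK$, which is a harmless rescaling.)

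For the ``if'' direction, again by contraposition, I would assume $K$ is not free of pdms roots and deduce $\ell_2(X) \cap \hhH_K \neq \{0\}$. So suppose there is a non-zero pd kernel $L \ll K$ that has a pdms root. By condition (d) of \THM{exists}, $\ell_2(X) \cap \hhH_L$ is dense in $\hhH_L$; since $L \neq 0$ forces $\hhH_L \neq \{0\}$, this density guarantees $\ell_2(X) \cap \hhH_L \neq \{0\}$. On the other hand, $L \ll K$ implies via \LEM{classic} that $\hhH_L \subset \hhH_K$. Combining these, any non-zero element of $\ell_2(X) \cap \hhH_L$ lies in $\ell_2(X) \cap \hhH_K$, so the latter is non-trivial, as desired.

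The two directions together give the equivalence. I expect the main (though still minor) obstacle to be the careful handling of the inclusion $\hhH_L \subset \hhH_K$ as a set-theoretic containment of function spaces: \LEM{classic} gives the inclusion as Hilbert spaces with a norm-bounded identity operator, and one must be sure that membership in $\ell_2(X)$ is an intrinsic property of the underlying function (independent of which reproducing-kernel norm one uses), so that an element of $\hhH_L$ that happens to be $\ell_2$-summable is simultaneously an element of $\ell_2(X) \cap \hhH_K$. This is immediate here because all the spaces in question consist of genuine complex-valued functions on $X$ and the inclusions are inclusions of functions, so no identification issues arise.
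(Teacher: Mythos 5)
Your proof is correct, but it takes a genuinely different route from the paper's. The paper derives \LEM{free} in three lines from item (IX) of \THM{pdms}: the maximal kernel \(K_0 \ll K\) possessing a pdms root satisfies ``\(K\) is free of pdms roots iff \(K_0 \equiv 0\)'', and an inspection of the proof of (IX) identifies \(\hhH_{K_0}\) with the closure in \(\hhH_K\) of \(\ell_2(X) \cap \hhH_K\), whence the equivalence. You instead argue both directions directly: for one direction you take a non-zero \(u \in \ell_2(X) \cap \hhH_K\), normalize it to a unit vector of \(\hhH_K\), and use the rank-one kernel \(\bar{u} \otimes u \ll K\) (the same device the paper uses in the proof of \THM{uniq} and in \EXM{otimes}, where the characterization ``\(\bar{v} \otimes v\) has a pdms root iff \(v \in \ell_2(X)\)'' is recorded and could have been cited verbatim); for the other you combine condition (d) of \THM{exists} with \LEM{classic} to push a non-zero element of \(\ell_2(X) \cap \hhH_L\) into \(\ell_2(X) \cap \hhH_K\). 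The underlying ingredients coincide---indeed the paper's proof of (IX) rests on exactly the same two results---but your version is self-contained and avoids the paper's slightly informal appeal to a fact established only inside the proof of (IX) rather than in its statement (namely \(\hhH_{K_0} = \overline{\ell_2(X) \cap \hhH_K}\)), at the cost of being longer; the paper's version buys brevity from machinery already built. Your normalization remark (rescaling \(u\) so that \(\bar{u} \otimes u \ll K\) rather than merely \(\ll cK\)) and your observation that membership in \(\ell_2(X)\) is intrinsic to the function, so the inclusion \(\hhH_L \subset \hhH_K\) from \LEM{classic} causes no identification issues, dispose of the only delicate points.
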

\begin{proof}
Let \(K_0\) be as specified in property (IX) listed in \THM{pdms}. Observe that
\(K\) is free of pdms roots iff \(K_0 \equiv 0\). An inspection of the proof of
the aforementioned item (IX) shows that \(\hhH_{K_0}\) coincides with
the closure in \(\hhH_K\) of \(\ell_2(X) \cap \hhH_K\), from which
the conclusion follows.
\end{proof}

\begin{proof}[Proof of \PRO{two}]
We start from (A). It follows from the countability of \(X\) that \(\hhH_K\) is
separable. Since \(K\) has a pdms root, \(\RrR \df \ell_2(X) \cap \hhH_K\) is
dense in \(\hhH_K\) (by \THM{exists}). Moreover, \(\RrR \neq \hhH_K\), because
\(K\) has more than one pdms root (see \COR{incl}). Note that \(\RrR\) is
an operator range in \(\hhH_K\) (the proof of (X) in \THM{pdms} shows that
\(\RrR\) admits a Hilbert space norm stronger than the norm induced from
\(\hhH_K\)). So, condition (g) of \THM{pure} implies that there is a closed
linear subspace \(\wwW\) of \(\hhH_K\) such that \(\wwW \cap \RrR = \wwW^{\perp}
\cap \RrR = \{0\}\). Equivalently,
\begin{equation}\label{eqn:aux28}
\wwW \cap \ell_2(X) = \wwW^{\perp} \cap \ell_2(X) = \{0\}.
\end{equation}
Denoting by \(P\) and \(Q\) the orthogonal projections in \(\hhH_K\) onto,
respectively, \(\wwW\) and \(\wwW^{\perp}\), we define pd kernels \(K_1\) and
\(K_2\) on \(X\) by \(K_1(x,y) \df \scalar{PK(x,\cdot)}{PK(y,\cdot)}_{\hhH_K}\)
and \(K_2(x,y) \df \scalar{QK(x,\cdot)}{QK(y,\cdot)}_{\hhH_K}\). Then
\(\hhH_{K_1} = \wwW\) and \(\hhH_{K_2} = \wwW^{\perp}\) (see Theorem~5 on page
37 in \cite{sai}) and therefore---thanks to \eqref{eqn:aux28} and
\LEM{free}---both \(K_1\) and \(K_2\) are free of pdms roots. But \(K_1+K_2 =
K\) (since \(P+Q = I\)) and we are done.\par
Now we pass to the general case (item (B)). We may and do assume \(X\) is
uncountable. We will show that \(X\) contains a countable subset \(A\) such that
\[K = (K\restriction{A \times A}) \oplus
\bigl(K\restriction{(X \setminus A) \times (X \setminus A)}\bigr)\] and
\(K\restriction{A \times A}\) has at least two pdms roots. Assume for a moment
that we have already found such a set \(A\). It then follows from (A) that
\[K\restriction{A \times A} = L_1+L_2\] where \(L_1\) and \(L_2\) are pd kernels
on \(A\) without pdms roots. We define pd kernels \(K_1\) and \(K_2\) on \(X\)
by: \[K_1 = L_1 \oplus
\bigl(K\restriction{(X \setminus A) \times (X \setminus A)}\bigr)\] and \(K_2 =
L_2 \oplus 0\) (where \(0\) here means the zero function on \((X \setminus A)
\times (X \setminus A)\)). It is easily seen that both \(K_1\) and \(K_2\) are
pd kernels such that \(K = K_1+K_2\). Moreover, it follows from property (I) in
\THM{pdms} that none of \(K_1\) and \(K_2\) has a pdms root. (Indeed, if \(K_j\)
had a pdms root, so would have \(K_j\restriction{A \times A} = L_j\) which is
impossible.) Hence, it remains to construct the set \(A\) with appropriate
properties. We will do this with the aid of condition (b) of \THM{exists}.\par
It follows from the aforementioned result that \(K = \bigoplus_{s \in S} K_s\)
where for any \(s \in S\), \(K_s\) is a pd kernel on a countable set \(X_s\)
that has a pdms root. If one of \(K_s\) has at least two pdms roots, we just set
\(A = X_s\) and finish the proof. So, we assume that \(K_s\) has a unique pdms
root for any \(s \in S\). It follows from \THM{uniq} that
\begin{equation}\label{eqn:aux29}
K_s \ll c_s \delta_{X_s}
\end{equation}
for some constant \(c_s \geqsl 0\). We take \(c_s\) to be the smallest possible
non-negative number witnessing \eqref{eqn:aux29}. We claim that
\begin{equation}\label{eqn:aux30}
\sup_{s \in S} c_s = \infty.
\end{equation}
Indeed, if \(c \df \sup_{s \in S} c_s\) was finite, then we would have \(K_s \ll
c \delta_{X_s}\) for all \(s \in S\) and thus also \(K \ll c \delta_X\), which
would mean that \(K\) would have a unique pdms root (by \THM{uniq}). So,
\eqref{eqn:aux30} holds and therefore there exists a sequence \(s_1,s_2,\ldots
\in S\) for which \(\lim_{n\to\infty} c_{s_n} = \infty\). We set \(A \df
\bigcup_{n=1}^{\infty} X_{s_n}\). It follows from property (I) in \THM{pdms}
that \(K\restriction{A \times A}\) has a pdms root. Further, since the numbers
\(c_s\) have been chosen optimal, there is no \(c > 0\) such that
\(K\restriction{A \times A} \ll c \delta_A\). Consequently,
\(K\restriction{A \times A}\) has more than one pdms root (by \THM{uniq}) and
we are done.
\end{proof}

Since any pd kernel defined on a finite set has a (unique) pdms root, condition
(b) of \THM{exists} completely reduces the study of the class of pd kernels
having pdms roots to pd kernels on infinite countable sets. So, pd kernels on
\(\NNN\) (or \(\ZZZ\)) are of special interest. Below we give a single positive
example on the existence of pdms roots and a list of counterexamples to related
subjects. Everywhere below \(\ell_2 = \ell_2(\NNN)\) (that is, indices of
sequences in \(\ell_2\) start from \(1\)), \(\ell_{fin} = \ell_{fin}(\NNN)\) and
\(e_1,e_2,\ldots\) is the canonical basis of \(\ell_2\).

\begin{exm}{shift}
Let \(S \in \Bb(\ell_2)\) be the standard unilateral shift (that is, \(S\) is
a linear isometry such that \(S e_n = e_{n+1}\)) and \(f \in \ell_2\) be
arbitrary. Then the pd kernel \(K\) on \(\NNN\) given by:
\begin{equation}\label{eqn:shift}
K(n,m) = \scalar{S^{n-1}f}{S^{m-1}f}_{\ell_2} \qquad (n, m \in \NNN)
\end{equation}
has a pdms root. To prove this, it is sufficient to find a closable operator
that factorizes \(K\) (see item (c1) in \THM{exists}). To this end, we model
the shift \(S\) on the Hardy space \(H^2\) of holomorphic functions on the disc
\(\DDD = \{z \in \CCC\dd\ |z| < 1\}\). Recall that:
\begin{enumerate}[(\(H^2\)a)]
\item a holomorphic function \(u\dd \DDD \ni z \mapsto \sum_{k=0}^{\infty} a_k
 z^k \in \CCC\) belongs to \(H^2\) if \(\sum_{k=0}^{\infty} |a_k|^2 < \infty\);
\item the monomials \(1,z,z^2,\ldots\) form an orthonormal basis of \(H^2\) (we
 use here a standard simplified notation; \(z^k\) is in fact the function
 \(z \mapsto z^k\) restricted to \(\DDD\));
\item for any \(z \in \DDD\), the evaluation functional \(H^2 \ni u \mapsto u(z)
 \in \CCC\) is continuous (the reproducing kernel of \(H^2\) has the form
 \((z,w) \mapsto \frac{1}{1-\bar{z}w}\));
\item the operator \(M\dd H^2 \to H^2\) given by \((Mu)(z) = zu(z)\ (z \in
 \DDD,\ u \in H^2)\) is unitarily equivalent to \(S\); more precisely, if \(V\dd
 \ell_2 \to H^2\) is a unitary operator such that \(Ve_n = z^{n-1}\ (n > 0)\),
 then \(V^{-1} M V = S\).
\end{enumerate}
Now for \(f \in \ell_2\) set \(F \df Vf \in H^2\) and define \(T\dd
\lin\{z^k\dd\ k \geqsl 0\} \to H^2\) as the multiplication operator by \(F\)
(that is, \((Tp)(z) = F(z)p(z)\) for \(p \in \DdD(T)\) and \(z \in \DDD\)). It
easily follows from the above property (\(H^2\)c) that \(T\) is closable.
Therefore \(V^{-1}TV\dd \ell_{fin} \to \ell_2\) is closable as well. Note that
\(S^k = V^{-1} M^k V\) and hence \(S^k f = V^{-1} M^k F = V^{-1} T z^k =
V^{-1}TV e_{k+1}\). Consequently, \(V^{-1}TV\) factorizes \(K\) and we are done.
\end{exm}

\begin{exm}{c0}
Similarly to the notion of an \(\ell_2\)-kernel, let us call a kernel \(K\) on
\(\NNN\) a \textit{\(c_0\)-kernel} if \(\lim_{n\to\infty} (|K(p,n)|+|K(n,p)|) =
0\) for all \(p \in \NNN\). Item (IV) in \THM{pdms} says that each
\textbf{bounded} pd kernel on \(\NNN\) that has a pdms root is
a \(c_0\)-kernel. On the other hand, pd \(\ell_2\)-kernels always have pdms
roots. So, two natural (contrary) questions arise:
\begin{enumerate}[(Quest{i}on A)]
\item Does every bounded pd \(c_0\)-kernel on \(\NNN\) have a pdms root?
\item Is every bounded pd kernel that has a pdms root an \(\ell_2\)-kernel?
\end{enumerate}
In this example we answer these two questions in the negative. Both the kernels
constructed below will be constant on the diagonal \(\{(n,n)\dd\ n \in \NNN\}\).
\begin{enumerate}[(A)]
\item In this part we construct a pd kernel \(K\) on \(\NNN\) that has a pdms
 root and satisfies:
 \begin{itemize}
 \item[\(\bullet\)] \(K(n,n) = 1\) for all \(n \in \NNN\) (and thus \(K\) is
  bounded);
 \item[\(\bullet\)] \(K(1,\cdot) \notin \bigcup_{p>0} \ell_p\).
 \end{itemize}
 Define a sequence \(f = (a_1,a_2,\ldots)\) of positive real numbers
 as follows:
 \[a_n = \sqrt{\frac{1}{H_n}-\frac{1}{H_{n+1}}} =
 \frac{1}{\sqrt{(n+1)H_n H_{n+1}}}\]
 where \(H_n \df \sum_{k=1}^n \frac1k\). The second formula for \(a_n\) shows
 that the sequence \(f\) is monotone decreasing, whereas the first implies that
 the series \(\sum_{k=1}^{\infty} a_k^2\) is telescoping and its sum equals
 \(\frac{1}{H_1} = 1\). So, \(f \in \ell_2\). Let \(S \in \Bb(\ell_2)\) be
 the shift as specified in \EXM{shift}. That example shows that the pd kernel
 \(K\) given by \eqref{eqn:shift} has a pdms root. We now check that \(K\) has
 all announced properties. It is easily seen that \(K(n,n) = \|f\|^2 = 1\).
 Finally, for any \(n > 0\) we have (recall that \(f\) is monotone decreasing):
 \[K(1,n) = \sum_{k=1}^{\infty} a_k a_{k+n-1} \geqsl \sum_{k=1}^{\infty}
 a_{k+n-1}^2 = \frac{1}{H_n}.\]
 Since \(\lim_{n\to\infty} \frac{\log n}{H_n} = 1\), we get that
 \(\sum_{n=1}^{\infty} \frac{1}{H_n^p} = \infty\) for any \(p > 0\) and
 therefore \(K(1,\cdot) \notin \bigcup_{p>0} \ell_p\).
\item In contrast to the example given in (A), now we construct a pd kernel
 \(K\dd \NNN \times \NNN \to [0,1]\) such that \(K(m,\cdot) \in \bigcap_{p>2}
 \ell_p\) for any \(m \in \NNN\), but \(K\) has no pdms roots. To this end, we
 take any sequence \(u \in (\bigcap_{p>2} \ell_p) \setminus \ell_2\) whose all
 entries lie in \([0,1]\) and define \(K_0\) as \(u \otimes u\) (see
 \eqref{eqn:otimes}). It follows from \EXM{otimes} that \(K_0\) has no pdms
 roots. However, \(K_0(m,\cdot) = u(m) u \in \bigcap_{p>2} \ell_p\). Now let
 \(D\) be the diagonal pd kernel on \(\NNN\) such that \(D(n,n) = 1-u(n)^2\).
 Since \(D \ll \delta_{\NNN}\), we conclude that \(D\) has a unique pdms root.
 So, it follows from \COR{sum} that \(K \df K_0 + L\) has no pdms roots. This
 kernel satisfies \(K(n,n) = 1\) for all \(n \in \NNN\) and \(K(m,\cdot) \in
 \bigcap_{p>2} \ell_p\) for any \(m > 0\).
\end{enumerate}
The above examples suggest that there is no handy description of bounded pd
kernels that have a pdms root.
\end{exm}

\begin{exm}{uniform}
Properties (VI) and (VII) listed in \THM{pdms} suggest that perhaps pd kernels
on a given set that have pdms roots form a set closed in the pointwise or
uniform topology (in the space of all kernels). As the following simple example
shows, this is not the case.\par
Let \(u\dd \NNN \to [0,1]\) be given by \(u(n) = \frac{1}{\sqrt{n}}\) and \(K
\df u \otimes u\) (see \eqref{eqn:otimes}). We infer from \EXM{otimes} that
\(K\) has no pdms roots. Moreover, since \(\dim(\hhH_K) = 1\), \(K\) is free of
pdms roots. Now for \(n > 0\) let \(K_n\) be a pd kernel on \(\NNN\) such that
\(K_n(p,q) = K(p,q)\) if both \(p\) and \(q\) are less than \(n\), and
\(K_n(p,q) = 0\) otherwise. It is easy to see that \(K_n\) is a pd kernel.
Since \(K_n\) is supported on a finite set, it has a unique pdms root (e.g., by
\THM{uniq}). However, since \(\lim_{n\to\infty} u(n) = 0\), \(K_n\) uniformly
converge to \(K\). So, the uniform limit of a bounded sequence of pd kernels
each of which has a unique pdms root can be free of pdms roots.
\end{exm}

We end the paper with the following

\begin{exm}{bd}
Property (III) listed in \THM{pdms} gives a necessary condition for an unbounded
pd kernel to have a pdms root that reads as follows: if an unbounded pd kernel
has a pdms root, so have all its bouded rescalings. In this example we show that
it is insufficient. More precisely, we will construct an unbounded pd kernel
\(K\) on \(\NNN\) such that:
\begin{itemize}
\item each bounded rescaling of \(K\) has a unique pdms root;
\item \(K\) has no pdms roots.
\end{itemize}
To construct such a kernel, it is sufficient to find two self-adjoint bounded
operators \(A, B \in \Bb(H)\) on a separable Hilbert space \(H\) such that for
some orthonormal basis \(f_1,f_2,\ldots\) of \(H\) the following conditions
hold:
\begin{enumerate}[({a}ux1)]
\item \(\RrR(A) \cap \RrR(B) = \{0\}\);
\item \(A f_n = r_n f_n\) where \(0 < r_n \to 0\) as \(n \to \infty\);
\item \(\|B f_n\| = 1\) for any \(n > 0\).
\end{enumerate}
To convince oneself of that, assume we have found such operators \(A\) and
\(B\). Observe that (aux2) implies that \(\NnN(A) = \{0\}\) (and thus
\(\RrR(A)\) is dense in \(H\)). We set \(\DdD \df \RrR(A)\) and \(C \df
A^{-1}\dd \DdD \to H\). Additionally, let \(U\dd \ell_2 \to H\) be the unitary
operator such that \(U e_n = f_n\). The kernel \(K\) we search for is given by:
\[K(n,m) \df \scalar{BCUe_n}{BCUe_m}_H = \frac{\scalar{Bf_n}{Bf_m}_H}{r_n r_m}
\qquad (n, m \in \NNN)\]
(the latter formula follows from (aux2)). To show that \(K\) has no pdms roots,
it suffices to check that the operator \(BCU\restriction{\ell_{fin}}\) is not
closable (as \(BCU\) factorizes \(K\); see condition (c1) in \THM{exists} and
\LEM{forall}). Since \(U\) is unitary, we only need to verify that \(T \df
BC\restriction{\lin\{f_n\dd\ n > 0\}}\) is not closable. To this end, first note
that the graph \(\Gamma(BC)\) of \(BC\) is contained in the closure (in \(H
\times H\)) of \(\Gamma(T)\) (as \(B\) is bounded and \(\DdD(T)\) is a core for
\(C\)). Thus, \(T\) is closable iff so is \(BC\), iff \((BC)^*\) is densely
defined. But \((BC)^* = C^* B^* = CB\) (since \(B\) is bounded and both \(B\)
and \(C\) are self-adjoint) and \(\DdD(CB) = \{0\}\) by (aux1). So, \(T\) is not
closable and hence \(K\) has no pdms roots. Now let \(v\dd \NNN \to \CCC\) be
any function such that the \(v\)-rescaling of \(K\) is bounded. This means that
\(\sup_{n\in\NNN} |v(n)|^2 K(n,n) < \infty\). But \(K(n,n) =
\bigl(\frac{\|Bf_n\|}{r_n}\bigr)^2 = r_n^{-2}\) (by (aux3)). So, there exists
\(c > 0\) such that
\begin{equation}\label{eqn:aux31}
\frac{|v(n)|}{r_n} \leqsl c \qquad (n > 0).
\end{equation}
In particular, \(\lim_{n\to\infty} v(n) = 0\). Denote by \(W\) the diagonal
operator on \(\ell_2\) (with respect to the orthonormal basis) such that \(We_n
= \overline{v(n)} e_n\) (\(W\) is compact, but we do not need this property).
Since \(\DdD = \bigl\{\sum_{n=1}^{\infty} \alpha_n f_n\dd\ \sum_{n=1}^{\infty}
(|\alpha_n|/r_n)^2 < \infty\bigr\}\), we easily infer from \eqref{eqn:aux31}
that \(\RrR(UW) \subset \DdD\) and \(CUW\) is bounded (here we do not apply
the Closed Graph Theorem---the boundedness of \(CUW\) is a direct consequence of
\eqref{eqn:aux31}, (aux2) and the definitions of \(C\), \(U\) and \(W\)).
Consequently, \(Q \df BACW\) is bounded and \[\scalar{Qe_n}{Qe_m}_H =
\overline{v(n)}v(m) \scalar{BCUe_n}{BCUe_m}_H = \overline{v(n)}v(m) K(n,m).\]
This means that the \(v\)-rescaling \(L\) of \(K\) is factorized by a bounded
operator (namely, by \(Q\)), from which it easily follows that \(L\) satisfies
condition (iii) of \THM{uniq} and therefore has a unique pdms root. In this way
we have reduced our proof to the construction of self-adjoint operators \(A\)
and \(B\) that satisfy (aux1)--(aux3), which we do below.\par
First of all, let \(H\) stand for \(L^2([0,1])\) (with the Lebesgue measure). We
arrange all integers in a one-to-one sequence \(k_1,k_2,\ldots\) and define
\(f_1,f_2,\ldots\) as a rearranged standard exponential orthonormal basis of
\(H\): \(f_n(x) = e^{2k_n\pi x\textup{i}}\). Let \(A \in \Bb(H)\) be given by
\(A f_n = \frac1n f_n\). So, \(A g = \sum_{n=1}^{\infty}
\frac{\scalar{g}{f_n}_H}{n} f_n\) for any \(g \in H\). Since the series
\(\sum_{n=1}^{\infty} \frac{\scalar{g}{f_n}_H}{n}\) is absolutely convergent, we
see that \(\RrR(A)\) consists of continuous functions. Note also that (aux2)
holds with \(r_n = \frac1n\).\par
Now we will define \(B\). To this end, take any bounded Borel function \(u\dd
[0,1] \to (0,\infty)\) such that for any non-empty open interval \(I \subset
[0,1]\),
\begin{equation}\label{eqn:int}
\int_I \frac{\dint{t}}{u(t)^2} = \infty.
\end{equation}
The proof that such a function exists is left to the reader. Additionally, we
may and do assume that \(u\) is a unit vector in \(H\). We define \(B\) as
the multiplication operator by \(u\); that is, \(B\dd H \to H\) is given by
\((Bf)(t) = u(t)f(t)\). It is clear  that \(B\) is positive self-adjoint and
bounded, and has trivial kernel. We claim that \(\RrR(A) \cap \RrR(B) = \{0\}\).
Indeed, it is sufficient to show that \(\RrR(B)\) contains no non-zero
continuous functions. To this end, assume that \(g\dd [0,1] \to \CCC\) is
continuous and non-zero. Then we can find a non-empty open interval \(I \subset
[0,1]\) and a constant \(\epsi > 0\) such that \(|g(x)| \geqsl \epsi\) for all
\(x \in I\). But then \(\int_0^1 \bigl(\frac{|g(x)|}{u(x)}\bigr)^2 \dint{x}
\geqsl \int_I \frac{\epsi^2}{u(x)^2} \dint{x} = \infty\) (by \eqref{eqn:int})
and hence \(\frac{g}{u} \notin H\). We conclude that \(g \notin \RrR(B)\).
Finally, since \(\|u\| = 1\) and \(|f_n(x)| = 1\) for any \(x \in I\), condition
(aux3) also holds and the proof is complete.
\end{exm}

\end{document}